\newtheorem{proposition}{Proposition}[section]
\newtheorem{theorem}[proposition]{Theorem}
\newtheorem{corollary}[proposition]{Corollary}
\newtheorem{lemma}[proposition]{Lemma}
\theoremstyle{definition}
\newtheorem{example}[proposition]{Example}
\newtheorem{definition}[proposition]{Definition}
\newtheorem{notation}[proposition]{Notation}
\theoremstyle{remark}
\newtheorem{remark}[proposition]{Remark}
\numberwithin{equation}{section}
\newcommand{\margincolor}{red}      
\definecolor{darkgreen}{rgb}{0,0.7,0}
\newcounter{margincounter}
\newcommand{\marginnum}{
\ifnum\value{margincounter}<10
\textcolor{\margincolor}{\begin{picture}(0,0)\put(2.2,2.4){\circle{9}}\end{picture}\footnotesize\arabic{margincounter}}
\else\ifnum\value{margincounter}<100
\textcolor{\margincolor}{\begin{picture}(0,0)\put(4.256,2.5){\circle{11}}\end{picture}\footnotesize\arabic{margincounter}}
\else
\textcolor{\margincolor}{\begin{picture}(0,0)\put(6.8,2.5){\circle{14}}\end{picture}\footnotesize\arabic{margincounter}}
\fi\fi
}
\newcommand{\integers}{\mathbb Z}
\newcommand{\naturals}{\mathbb N}
\newcommand{\reals}{\mathbb R}
\DeclareMathOperator{\brick}{\mathsf{brick}}
\DeclareMathOperator{\tors}{\mathsf{tors}}
\DeclareMathOperator{\torf}{\mathsf{torf}}
\DeclareMathOperator{\sbrick}{\mathsf{sbrick}}
\DeclareMathOperator{\Sub}{\mathsf{Sub}}
\DeclareMathOperator{\Filt}{\mathsf{Filt}}
\DeclareMathOperator{\add}{\mathsf{add}}
\DeclareMathOperator{\Fac}{\mathsf{Fac}}
\let\mod\relax
\DeclareMathOperator{\mod}{\mathsf{mod}}
\DeclareMathOperator{\Hom}{\mathsf{Hom}}
\DeclareMathOperator{\End}{\mathsf{End}}
\DeclareMathOperator{\Kernel}{\mathsf{Ker}}
\DeclareMathOperator{\Cokernel}{\mathsf{Cok}}
\DeclareMathOperator{\T}{\mathsf{T}}
\DeclareMathOperator{\Image}{\mathsf{Im}}
\DeclareMathOperator{\proj}{\mathsf{proj}}
\DeclareMathOperator{\id}{\mathsf{id}}
\DeclareMathOperator{\Hasse}{\mathsf{Hasse}}
\DeclareMathOperator{\wide}{\mathsf{wide}}
\DeclareMathOperator{\Label}{\mathsf{Label}}
\newcommand{\join}{\vee}
\renewcommand{\Join}{\bigvee}
\newcommand{\Meet}{\bigwedge}
\newcommand{\xto}{\xrightarrow}
\newcommand{\op}{{\mathrm{op}}}
\newcommand{\CC}{{\mathcal C}}
\newcommand{\WW}{{\mathcal W}}
\newcommand{\TT}{{\mathcal T}}
\renewcommand{\AA}{{\mathcal A}}
\renewcommand{\S}{{\mathcal S}}
\newcommand{\UU}{\mathcal{U}}
\newcommand{\VV}{\mathcal{V}}
\newcommand{\FF}{{\mathcal F}}
\newcommand{\X}{{\mathcal X}}
\newcommand{\LL}{{\mathcal L}}
\DeclareMathOperator{\WL}{\mathsf{W}_L}
\DeclareMathOperator{\WR}{\mathsf{W}_R}
\DeclareMathOperator{\simple}{\mathsf{sim}}
\newcommand{\GG}{{\mathcal G}}
\DeclareMathOperator{\F}{\mathsf{F}}
\DeclareMathOperator{\Serre}{\mathsf{Serre}}
\title{Wide subcategories and lattices of torsion classes}
\author{Sota Asai} 
\address{Sota Asai: Research Institute for Mathematical Sciences, Kyoto University, Kitashirakawa-Oiwakecho, Sakyo-ku, Kyoto-shi, Kyoto-fu, 606-8502, Japan}
\email{asaisota@kurims.kyoto-u.ac.jp}
\author{Calvin Pfeifer}
\address{Calvin Pfeifer: Mathematisches Institut,
Universit\"{a}t Bonn, Endenicher Allee 60, 53115 Bonn, Germany}
\email{calvin.pfeifer@uni-bonn.de}
\date{\today}
\begin{document}

\begin{abstract}
In this paper, we study the relationship between wide subcategories and torsion classes of an abelian length category $\AA$ from the point of view of lattice theory.
Motivated by $\tau$-tilting reduction of Jasso,
we mainly focus on intervals $[\UU,\TT]$ in the lattice $\tors \AA$ of torsion classes in $\AA$ such that $\WW:=\UU^\perp \cap \TT$ is a wide subcategory of $\AA$; we call these intervals wide intervals. 
We prove that a wide interval $[\UU,\TT]$ is isomorphic to
the lattice $\tors \WW$ of torsion classes in the abelian category $\WW$. 
We also characterize wide intervals in two ways: First, in purely lattice theoretic terms based on the brick labeling established by Demonet--Iyama--Reading--Reiten--Thomas; 
and second, 
in terms of the Ingalls--Thomas correspondences 
between torsion classes and wide subcategories, which were
further developed by Marks--\v{S}\v{t}ov\'{i}\v{c}ek.
\end{abstract}

\maketitle

\tableofcontents

\section{Introduction}

In the mid-20th century Dickson \cite{D} vastly generalized the torsion theory of abelian groups to abelian categories $\AA$. He defined a \textit{torsion pair} $(\TT,\FF)$ in $\AA$ to be a pair of full subcategories $\TT, \FF \subseteq \AA$ satisfying the $\Hom$-orthogonality conditions 
\[
\FF=\TT^\perp \text{ and } \TT={^\perp\FF}.
\]
Here and throughout, we indicate the \textit{right $\Hom$-perpendicular category} of a full subcategory $\X\subseteq\AA$ by
\begin{align*}
\X^\perp:=\{Y\in\AA \mid \text{for all $X \in \X$, $\Hom_{\AA}(X,Y)=0$} \},
\end{align*}
and define the \textit{left $\Hom$-perpendicular category} $^\perp\X$ dually. Since then many authors investigated torsion pairs from several points of view, including their
classification \cite{Br,Ho},
derived equivalences \cite{BB,Ha,HRS,Rickard}, 
cluster theory \cite{IT},
$\tau$-tilting theory \cite{AIR, KY, MS}
and stability conditions \cite{B}.

From now on we assume that $\AA$ is an essentially small abelian length category.
For a full subcategory $\TT \subseteq \AA$ 
there is a torsion pair $(\TT,\FF)$ in $\AA$ if and only if
$\TT$ is closed under extensions and factor objects;
such subcategories $\TT$ are called \textit{torsion classes}.
In our setting the torsion classes in $\AA$ form a set $\tors \AA$, partially ordered by inclusion. Note that $\tors \AA$ is stable under arbitrary intersections. In particular, for every full subcategory $\X\subseteq\AA$, there is a smallest torsion class $\T(\X)$ containing $\X$, namely
\begin{align}\label{minimaltorsionclass}
\T(\X):=\bigcap_{\TT\in\tors \AA;\X\subseteq\TT}\TT.
\end{align}
It is not hard to see that the partially ordered set $\tors \AA$ is in fact a \textit{complete lattice},
that is, \textit{joins} and \textit{meets} of arbitrary subsets of $\tors \AA$ exist. More generally, every interval
\begin{align*}
[\UU,\TT]:=\{ \VV \in \tors \AA \mid \UU \subseteq \VV \subseteq \TT \}
\end{align*}
in $\tors \AA$ is a complete lattice. To an interval $[\UU,\TT]$ in $\tors \AA$ we associate a full subcategory $\WW:=\UU^\perp\cap\TT$ as proposed in \cite{DIRRT}. This subcategory ``measures the difference'' between $\UU$ and $\TT$ in a precise sense (see Lemma \ref{lemma:intervalcategory}). We investigate those intervals $[\UU,\TT]$ in $\tors \AA$ such that $\WW:=\UU^\perp \cap \TT$ is an abelian subcategory of $\AA$; that is, $\WW$ is closed under kernels, images and cokernels. Then we can define the complete lattice $\tors \WW$ as before and compare it with the interval $[\UU,\TT]$ (see Theorem \ref{theorem:reduction:intro}). Note that such subcategories $\WW$ are always closed under extensions, as $\UU^\perp$ and $\TT$ are. Thus, if we assume that $\WW$ is abelian, it will be automatically a {\it wide subcategory} of $\AA$, that is, an abelian subcategory closed under extensions. This motivates the following definition.

\begin{definition}[Definition \ref{definition:wideintervals}]\label{intro:definition:wideintervals}
We call an interval $[\UU,\TT]$ in $\tors \AA$ a \textit{wide interval} 
if $\UU^\perp \cap \TT$ is a wide subcategory of $\AA$.
\end{definition}

An important class of examples of wide intervals is given by numerical torsion classes 
in the case $\AA=\mod A$, where $A$ is a finite-dimensional algebra over a field $K$.
Let us write $\proj A$ for the category of finitely generated projective $A$-modules.

\begin{proposition}\cite{BKT,B2}
For each element $\theta \in K_0(\proj A)\otimes_{\integers}\reals$, define two numerical 
torsion classes $\TT_\theta \subseteq \overline{\TT}_\theta$ in $\mod A$ by 
\begin{align*}
\TT_\theta &:= \{ M \in \mod A \mid 
\text{for any nonzero quotient $N$, $\theta(N) > 0$}\}, \\
\overline{\TT}_\theta &:= \{ M \in \mod A \mid 
\text{for any quotient $N$, $\theta(N) \ge 0$}\}.
\end{align*}
Then $[\TT_\theta, \overline{\TT}_\theta]$ is a wide interval
with the associated wide subcategory 
$\WW_\theta:=(\TT_\theta)^\perp \cap \overline{\TT}_\theta$ is 
the \textit{$\theta$-semistable subcategory} defined by King \cite{K}.
\end{proposition}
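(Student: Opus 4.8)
The plan is to verify three things in turn: that $\TT_\theta$ and $\overline{\TT}_\theta$ are torsion classes; that $\TT_\theta\subseteq\overline{\TT}_\theta$; and that $\WW_\theta:=(\TT_\theta)^\perp\cap\overline{\TT}_\theta$ is precisely the $\theta$-semistable subcategory of King \cite{K}. Since the latter is a wide subcategory of $\mod A$ by \cite{K}, Definition \ref{intro:definition:wideintervals} then immediately gives that $[\TT_\theta,\overline{\TT}_\theta]$ is a wide interval. Throughout I use that $\theta$ induces a linear form on $K_0(\mod A)$, so $\theta(-)$ depends only on the class of a module in $K_0(\mod A)$ and is additive on short exact sequences. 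The inclusion $\TT_\theta\subseteq\overline{\TT}_\theta$ is immediate: $\theta(0)=0\ge 0$ and every nonzero quotient satisfies $\theta>0$, hence $\theta\ge 0$.

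That $\TT_\theta$ and $\overline{\TT}_\theta$ are torsion classes is the first routine step. Closure under quotients is clear, as a quotient of a quotient of $M$ is a quotient of $M$. For closure under extensions, take $0\to L\to M\to N\to 0$ with $L,N$ both in $\TT_\theta$ (resp. both in $\overline{\TT}_\theta$) and a proper submodule $K\subsetneq M$; writing $L'\cong L/(L\cap K)$ for the image of $L$ in $M/K$, the quotient $(M/K)/L'\cong M/(L+K)$ is a quotient of $N$, so additivity of $\theta$ on $0\to L'\to M/K\to M/(L+K)\to 0$ together with the hypotheses on $L$ and $N$ forces $\theta(M/K)>0$ (resp. $\ge 0$); the only point to note is that when $L'=0$ the module $M/K$ is itself a nonzero quotient of $N$.

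The substantive step is the identification of $\WW_\theta$ with the $\theta$-semistable subcategory. For the inclusion $\WW_\theta\subseteq\{\theta\text{-semistable modules}\}$: let $M\in(\TT_\theta)^\perp\cap\overline{\TT}_\theta$ be nonzero. Then $M\notin\TT_\theta$, since otherwise $\id_M\in\Hom_A(M,M)=0$; hence $M$ has a nonzero quotient of $\theta$-value $\le 0$, which by $M\in\overline{\TT}_\theta$ has $\theta$-value exactly $0$, so the set of submodules $K\subseteq M$ with $\theta(M/K)=0$ is nonempty. I claim $\theta(M)=0$, i.e. $K=0$ lies in this set. If not, choose $K$ minimal in the set (possible since $M$ has finite length), so $K\ne 0$; for every proper submodule $L\subsetneq K$, additivity on $0\to K/L\to M/L\to M/K\to 0$ gives $\theta(M/L)=\theta(K/L)$ (recall $\theta(M/K)=0$), which is $\ge 0$ because $M/L$ is a quotient of $M\in\overline{\TT}_\theta$ and is nonzero by minimality of $K$; thus every nonzero quotient of $K$ has positive $\theta$-value, so $K\in\TT_\theta$, and the inclusion $0\ne K\hookrightarrow M$ contradicts $M\in(\TT_\theta)^\perp$. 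Therefore $\theta(M)=0$, and combined with $M\in\overline{\TT}_\theta$ this says precisely that $M$ is $\theta$-semistable (each submodule $N'\subseteq M$ then has $\theta(N')=-\theta(M/N')\le 0$). Conversely, a $\theta$-semistable $M$ lies in $\overline{\TT}_\theta$ by definition, and any nonzero $f\colon X\to M$ with $X\in\TT_\theta$ would have image that is at once a nonzero quotient of $X$ (so $\theta>0$) and a submodule of $M$ (so $\theta\le 0$), which is absurd; hence $M\in(\TT_\theta)^\perp$.

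The main obstacle is exactly the inclusion $\WW_\theta\subseteq\{\theta\text{-semistable modules}\}$ just sketched: producing, inside an arbitrary object of $(\TT_\theta)^\perp\cap\overline{\TT}_\theta$ of positive $\theta$-value, a nonzero torsion submodule violating the $\Hom$-orthogonality, which requires setting up the quotient bookkeeping so that minimality of $K$ can be exploited. The remaining input, that the $\theta$-semistable subcategory is wide, is then the only external ingredient: its closure under extensions follows from the torsion-class property above and additivity of $\theta$, while its closure under kernels, images and cokernels in $\mod A$ is King's theorem \cite{K} (and can also be recovered from the same additivity arguments).
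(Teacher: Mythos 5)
The paper cites this proposition from \cite{BKT,B2} without supplying its own argument, so there is no internal proof to compare against; your proposal fills that gap with a self-contained derivation, and the overall structure — verify the torsion-class axioms for $\TT_\theta$ and $\overline{\TT}_\theta$, then identify $\WW_\theta$ with King's semistable subcategory by producing, inside any nonzero $M\in(\TT_\theta)^\perp\cap\overline{\TT}_\theta$ with $\theta(M)\neq 0$, a nonzero submodule lying in $\TT_\theta$ — is the natural one. One step should be stated more carefully. After choosing $K$ minimal among submodules with $\theta(M/K)=0$, the exact sequence $0\to K/L\to M/L\to M/K\to 0$ for $L\subsetneq K$ gives $\theta(K/L)=\theta(M/L)\ge 0$, but to conclude $K\in\TT_\theta$ you need the \emph{strict} inequality $\theta(K/L)>0$. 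Strictness is supplied by minimality of $K$: if $\theta(M/L)=0$ for some $L\subsetneq K$, then $L$ would belong to the set of submodules with vanishing $\theta$-value of the corresponding quotient while being strictly smaller than $K$. Your parenthetical ``is nonzero by minimality of $K$'' presumably intends $\theta(M/L)\neq 0$, but as written it reads as the automatic (and insufficient) assertion that the module $M/L$ is nonzero; it is worth making the intended invocation of minimality explicit. With that clarification the argument is complete, and, as you observe, the wideness of $\WW_\theta$ can be taken as King's theorem or recovered from the same additivity bookkeeping applied to kernels, images and cokernels.
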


See also \cite{BST,Y,A2} for more information on numerical torsion classes.

Another important example appears in \textit{$\tau$-tilting reduction} as developed by Jasso \cite{J}. 

\begin{proposition}\cite[Theorem 3.12]{J}
For a $\tau$-rigid module $U\in\mod A$, 
the interval $[\Fac U,{^\perp(\tau U)}]$ is a wide interval. 
This interval is isomorphic to the lattice 
$\tors \WW_{U}$ of torsion classes in $\WW_{U}:=U^\perp\cap{^\perp(\tau U)}$.
\end{proposition}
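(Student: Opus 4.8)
The plan is to reduce the statement to the criterion of Definition~\ref{intro:definition:wideintervals} together with Theorem~\ref{theorem:reduction:intro}. Once we verify that $[\Fac U,{}^\perp(\tau U)]$ is a well-defined interval in $\tors(\mod A)$ and that $\WW_U=U^\perp\cap{}^\perp(\tau U)$ is a wide subcategory, the interval is a wide interval by definition and the isomorphism $[\Fac U,{}^\perp(\tau U)]\cong\tors\WW_U$ is exactly the conclusion of Theorem~\ref{theorem:reduction:intro}, once we observe that $(\Fac U)^\perp=U^\perp$ (an object of $\Fac U$ is a quotient of a power of $U$, hence is killed by the same modules as $U$), so that $\WW_U=U^\perp\cap{}^\perp(\tau U)=(\Fac U)^\perp\cap{}^\perp(\tau U)$ is precisely the wide subcategory associated with the interval. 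That $[\Fac U,{}^\perp(\tau U)]$ is an interval is the starting point: ${}^\perp(\tau U)$ is closed under factor objects and extensions, hence is a torsion class, and it contains $U$ because $U$ is $\tau$-rigid, so it contains $\Fac U$; and $\Fac U$ is a torsion class as well, which will follow from the first $\Ext^1$-vanishing below by the usual lifting argument.

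The technical heart is a pair of $\Ext^1$-vanishing statements obtained from the Auslander--Reiten formulas. First, for every $Z\in{}^\perp(\tau U)$ one has $\Ext^1_A(U,Z)\cong D\overline{\Hom}_A(Z,\tau U)=0$, since $\Hom_A(Z,\tau U)=0$ and $\overline{\Hom}_A(Z,\tau U)$ is a quotient of $\Hom_A(Z,\tau U)$; that is, $\Ext^1_A(U,{}^\perp(\tau U))=0$. Second, for every $X\in U^\perp$ one has $\Ext^1_A(X,\tau U)\cong D\underline{\Hom}_A(U_0,X)=0$, where $U_0$ denotes $U$ with its projective direct summands deleted: passing to $U_0$ does not change $\tau U$, only restricts the condition $X\in U^\perp$ to the non-projective part, and ensures $\tau^{-1}\tau U_0\cong U_0$ so that the relevant form of the Auslander--Reiten formula applies; thus $\Ext^1_A(U^\perp,\tau U)=0$.

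Now take any morphism $f\colon M\to N$ in $\WW_U$ and consider the exact sequences $0\to\Kernel f\to M\to\Image f\to 0$ and $0\to\Image f\to N\to\Cokernel f\to 0$. Since $U^\perp$ is a torsion-free class it is closed under subobjects, so $\Kernel f,\Image f\in U^\perp$; since ${}^\perp(\tau U)$ is a torsion class it is closed under factor objects, so $\Image f,\Cokernel f\in{}^\perp(\tau U)$; in particular $\Image f\in\WW_U$ already. Applying $\Hom_A(U,-)$ to the second sequence and using $\Hom_A(U,N)=0$ together with $\Ext^1_A(U,\Image f)=0$ (the first vanishing, as $\Image f\in{}^\perp(\tau U)$) gives $\Hom_A(U,\Cokernel f)=0$, hence $\Cokernel f\in\WW_U$. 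Applying $\Hom_A(-,\tau U)$ to the first sequence and using $\Hom_A(M,\tau U)=0$ together with $\Ext^1_A(\Image f,\tau U)=0$ (the second vanishing, as $\Image f\in U^\perp$) gives $\Hom_A(\Kernel f,\tau U)=0$, hence $\Kernel f\in\WW_U$. Therefore $\WW_U$ is closed under kernels, images and cokernels, so it is an abelian subcategory of $\mod A$, and being an intersection of two extension-closed subcategories it is extension-closed, hence a wide subcategory. This shows $[\Fac U,{}^\perp(\tau U)]$ is a wide interval, and Theorem~\ref{theorem:reduction:intro} finishes the proof.

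I expect the only genuinely delicate point to be the second $\Ext^1$-vanishing $\Ext^1_A(X,\tau U)=0$ for $X\in U^\perp$: it requires the version of the Auslander--Reiten formula involving $\underline{\Hom}_A$ and the reduction to a $U$ with no projective direct summands, whereas the first vanishing and all of the torsion-theoretic closure bookkeeping are completely formal. One could instead bypass the Auslander--Reiten formulas and recognise $[\Fac U,{}^\perp(\tau U)]$ as a wide interval through the brick-labelling characterisation of wide intervals developed later in the paper, but the route above is shorter and self-contained modulo those standard formulas; it is, in essence, Jasso's $\tau$-tilting reduction recast in the present language.
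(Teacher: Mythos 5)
The paper does not prove this proposition itself; it simply cites Jasso's Theorem~3.12 and then, in Section~\ref{section:reduction}, develops Theorem~\ref{theorem:reduction} as a generalization. So there is no ``paper's own proof'' to compare against. That said, your argument is correct, and it is also the natural way to recover the proposition \emph{inside} the present paper: you supply the one piece of $\tau$-tilting-specific input --- that $\WW_U$ is wide --- and then invoke the paper's general Theorem~\ref{theorem:reduction} for the lattice isomorphism. There is no circularity here, since the proof of Theorem~\ref{theorem:reduction} uses only Lemma~\ref{lemma:intervalcategory} and general torsion theory, never the proposition.

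Your verification of wideness is clean and correct. The two Auslander--Reiten vanishings
$\Ext^1_A(U,{}^\perp(\tau U))=0$ (from $\Ext^1_A(U,Z)\cong D\overline{\Hom}_A(Z,\tau U)$) and
$\Ext^1_A(U^\perp,\tau U)=0$ (from $\Ext^1_A(X,\tau U)\cong D\underline{\Hom}_A(\tau^{-1}\tau U, X)=D\underline{\Hom}_A(U_0,X)$, with $U_0$ the non-projective part of $U$)
are exactly what is needed, and the diagram chases for $\Kernel f$ and $\Cokernel f$ are correct; closure under extensions is immediate since $\WW_U$ is the intersection of two extension-closed classes. You also correctly note that the first vanishing is what makes $\Fac U$ a torsion class, and that $(\Fac U)^\perp=U^\perp$, which is needed to identify $\WW_U$ with the interval's associated subcategory $\UU^\perp\cap\TT$. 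This is a somewhat different and more self-contained route than Jasso's original, which proceeds via an equivalence $\WW_U\simeq\mod C_U$ for an explicit finite-dimensional algebra $C_U$ (Jasso, Theorem~3.8), hence obtains wideness as a byproduct of a stronger structural result; your AR-formula argument establishes wideness directly without needing the algebra $C_U$, at the cost of not producing it. Both are standard, and yours is well suited to the level of generality the paper is aiming for.
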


We aim to extend this result in this paper.

In general, we call two torsion classes $\UU \subsetneq \TT$ in $\AA$ \textit{adjacent} if there exists no torsion class $\VV$ in $\AA$ satisfying $\UU \subsetneq \VV \subsetneq \TT$. 
For example, if $[\UU,\TT]=[\Fac U, {^\perp}(\tau U)]$ and 
$U$ is an almost support $\tau$-tilting module,
then $\UU$ and $\TT$ are adjacent \cite[Example 3.5]{DIJ}.
For two adjacent torsion classes $\UU\subsetneq\TT$, the subcategory $\WW=\UU^\perp\cap\TT$ is wide and contains a unique brick $S$ \cite{BCZ,DIRRT}.
In particular, we can label the arrow $\TT \to \UU$ 
in the Hasse quiver $\Hasse(\tors \AA)$ of the partially ordered set $\tors \AA$ by the brick $S$.
This is called \textit{brick labeling} of $\Hasse(\tors \AA)$ introduced in \cite{A} for Hasse arrows between functorially finite torsion classes and in general in \cite{DIRRT}.

Our first result shows that the wide interval $[\UU,\TT]$ and $\tors \WW$ are isomorphic complete lattices and that this isomorphism is compatible with their brick labelings.

\begin{theorem}[Theorem \ref{theorem:reduction}]\label{theorem:reduction:intro}
Let $[\UU,\TT]$ be a wide interval in $\tors \AA$ and set $\WW:=\UU^\perp\cap\TT$. 
\begin{itemize}
\item[(1)]
There are mutually inverse isomorphisms of complete lattices
\begin{align*}\xymatrix@R0em@C4em{
[\UU,\TT]\ar@<0.5ex>[r]^{\Phi}&\ar@<0.5ex>[l]^{\Psi}\tors \WW
}\end{align*}
given by $\Phi(\VV):=\UU^\perp\cap\VV$ and $\Psi(\X):=\T(\UU,\X)$ for $\VV\in[\UU,\TT]$ and $\X\in\tors \WW$ respectively. 
Moreover, $\Psi(\X)=\UU*\X$ holds for every $\X\in\tors \WW$.
\item[(2)]
The isomorphism $\Phi$ preserves the brick labeling: Namely, the brick label of $\VV_1 \to \VV_2$ in $[\UU,\TT]$ is the same as the brick label of $\Phi(\VV_1) \to \Phi(\VV_2)$ in $\tors \WW$.
\item[(3)]
The following sets coincide:
\begin{itemize}
\item The set $\simple \WW$ of isomorphism classes of simple objects in $\WW$. 
\item The set of labels of arrows in the Hasse quiver of $[\UU,\TT]$ starting at $\TT$.
\item The set of labels of arrows in the Hasse quiver of $[\UU,\TT]$ ending at $\UU$.
\end{itemize}
\end{itemize}
\end{theorem}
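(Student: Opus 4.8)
The plan is to establish the three parts in order, using (1) as the backbone for (2) and (3). For part (1), the key is that the functions $\Phi$ and $\Psi$ are well-defined, order-preserving, and mutually inverse. Well-definedness of $\Phi$ uses that $\UU^\perp\cap\VV$ is a torsion class \emph{inside} the abelian category $\WW=\UU^\perp\cap\TT$: since $\VV\subseteq\TT$ and $\VV$ is closed under extensions and quotients in $\AA$, the intersection $\UU^\perp\cap\VV$ is closed under extensions and quotients in $\WW$ (quotients computed in $\WW$ agree with those in $\AA$ by abelianness). For $\Psi$, I would show $\T(\UU,\X)=\UU*\X$ directly: the subcategory $\UU*\X$ of objects $M$ admitting a short exact sequence $0\to U\to M\to X\to 0$ with $U\in\UU$, $X\in\X$ is closed under quotients and extensions (this is where I expect to lean on $\UU$ being a torsion class and $\X\subseteq\UU^\perp$, so that the ``$\UU$-part'' is functorial — the torsion radical), hence it is a torsion class containing $\UU$ and $\X$; minimality then gives $\T(\UU,\X)\subseteq\UU*\X$, and the reverse inclusion is immediate since any torsion class containing $\UU$ and $\X$ is extension-closed. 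That $\UU*\X\subseteq\TT$ follows from $\UU,\X\subseteq\TT$ and $\TT$ extension-closed, so $\Psi$ lands in $[\UU,\TT]$. The inverse identities $\Phi\Psi=\id$ and $\Psi\Phi=\id$ reduce to: (a) $\UU^\perp\cap(\UU*\X)=\X$ for $\X\in\tors\WW$, and (b) $\UU*(\UU^\perp\cap\VV)=\VV$ for $\VV\in[\UU,\TT]$. Both should follow from Lemma \ref{lemma:intervalcategory} (the ``$\WW$ measures the difference'' statement) together with the fact that every $M\in\VV$ sits in $0\to tM\to M\to M/tM\to 0$ with $tM\in\UU$ the torsion radical and $M/tM\in\UU^\perp\cap\VV$ since $\VV$ is quotient-closed. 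Order-preservation of both maps is clear from the formulas, and a bijective order-preserving map with order-preserving inverse is a lattice isomorphism; completeness is automatic since both sides are complete lattices and any lattice isomorphism between complete lattices preserves arbitrary joins and meets.

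For part (2), I would argue that $\Phi$ restricts to a bijection on covering pairs: since $\Phi$ is a lattice isomorphism, $\VV_1\to\VV_2$ is a Hasse arrow in $[\UU,\TT]$ if and only if $\Phi(\VV_1)\to\Phi(\VV_2)$ is a Hasse arrow in $\tors\WW$, i.e.\ $\VV_1,\VV_2$ are adjacent iff $\Phi(\VV_1),\Phi(\VV_2)$ are. The brick label of an adjacent pair $\UU'\subsetneq\TT'$ is characterized as the unique brick in $\TT'\cap(\UU')^\perp$ (in whichever ambient abelian category we work), so I must check that the unique brick in $\VV_2^{\perp_\AA}\cap\VV_1$ coincides with the unique brick in $(\Phi(\VV_2))^{\perp_\WW}\cap\Phi(\VV_1)$, the latter perpendicular taken inside $\WW$. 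The point is that $\VV_2^{\perp}\cap\VV_1 = \bigl((\UU^\perp\cap\VV_2)^{\perp_\WW}\bigr)\cap(\UU^\perp\cap\VV_1)$: both compute $\WW$-objects that are killed by $\Hom$ from $\VV_2$, and since a brick $S$ with $S\in\VV_1\subseteq\TT$ automatically lies in $\UU^\perp$ whenever it is $\Hom$-orthogonal to $\UU\subseteq\VV_2$, the two candidate bricks are forced to agree. I expect the bookkeeping of which perpendicular (in $\AA$ versus in $\WW$) is meant to be the fiddly part here, but Lemma \ref{lemma:intervalcategory} should reconcile them.

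Part (3) is then a corollary of (2) together with the known description of brick labels of arrows \emph{at the top and bottom} of a torsion lattice. Applying the general fact (due to \cite{DIRRT}, available for $\tors\WW$ since $\WW$ is an abelian length category) that the labels of Hasse arrows out of the maximal torsion class $\WW\in\tors\WW$ are exactly the simple objects of $\WW$ — indeed an arrow $\WW\to\VV'$ has brick label $S$, and $\WW\to\T(\WW\setminus\{S\text{-related}\})$-type reasoning identifies these $S$ with $\simple\WW$ — and dually that the labels of arrows into the minimal torsion class $0$ are also precisely $\simple\WW$ (a torsion-free/socle version of the same statement, again from \cite{DIRRT}). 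Under $\Phi$, the top $\TT$ of $[\UU,\TT]$ maps to $\WW$ and the bottom $\UU$ maps to $0$, and by part (2) the labels are preserved; hence the labels of arrows starting at $\TT$, the labels of arrows ending at $\UU$, and $\simple\WW$ all coincide. The main obstacle in the whole argument is really the careful verification of (1) — specifically the two inverse identities (a) and (b) and the claim $\T(\UU,\X)=\UU*\X$ — since everything downstream is formal once the lattice isomorphism and its compatibility with labels are in hand; I anticipate the torsion-radical argument showing $\UU*\X$ is quotient-closed is the one spot that genuinely uses $\X\subseteq\UU^\perp$ and must be done with care.
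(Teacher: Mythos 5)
Your plan follows the paper's overall architecture — set up $\Phi$ and $\Psi$, show they are mutually inverse poset isomorphisms, transport brick labels, and read off (3) via Proposition \ref{proposition:endpoints} — but reorganizes the one place where real work happens in part (1). The paper proves $\Phi\circ\Psi=\id$ (that is, $\UU^\perp\cap\T(\UU,\X)\subseteq\X$) directly by induction on the length in $\WW$ of an object $Y\in\UU^\perp\cap\T(\UU,\X)$, cutting $Y$ by the torsion pair $(\T(\X),\X^\perp)$ and using that $\WW$ is wide to keep the resulting cokernel in $\WW$; the identity $\Psi(\X)=\UU*\X$ is then obtained for free from the already-established isomorphism, rather than proved first. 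You instead propose to establish $\T(\UU,\X)=\UU*\X$ up front by showing $\UU*\X$ is a torsion class, after which both inverse identities become formal (one from Lemma \ref{lemma:intervalcategory}, the other because $Y\in\UU^\perp\cap(\UU*\X)$ forces $t_\UU Y=0$). This is a legitimate alternative, but the verification that $\UU*\X$ is closed under quotients and extensions is not merely a consequence of the torsion radical being functorial and $\X\subseteq\UU^\perp$; at several points you must use that $\WW$ is closed under kernels, images, and cokernels so that the relevant subquotients of $M/t_\UU M$ actually lie in $\WW$, where $\X$ is a torsion class. In other words, you relocate the paper's length induction to a different spot rather than avoid it — which you do flag as the part needing care. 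Your handling of (2) — showing $(\UU^\perp\cap\VV_1)^\perp\cap(\UU^\perp\cap\VV_2)=\VV_1^\perp\cap\VV_2$ for $\VV_1\subseteq\VV_2$ in $[\UU,\TT]$, so the candidate brick is the same whether $\Hom$-perpendiculars are taken in $\AA$ or in $\WW$ because everything lives inside the full subcategory $\WW$ — and of (3) via Proposition \ref{proposition:endpoints} coincide with the paper's argument.
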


This is a large extension of 
\cite[Theorem 3.12]{J} and 
\cite[Theorem 4.12, Proposition 4.13, Theorem 4.16]{DIRRT},
which deal with functorially finite torsion classes in $\mod A$.

Thus, it is natural to characterize wide intervals in terms of Hasse arrows in $\tors \AA$.
For this purpose, we define 
\begin{align*}
[\UU,\TT]^+&:=\{\TT\}\cup\{\VV\in[\UU,\TT] \mid \text{there exists
$(\TT\to\VV)\in\Hasse(\tors \AA)$}\},\\
[\UU,\TT]^-&:=\{\UU\}\cup\{\VV\in[\UU,\TT] \mid \text{there exists
$(\VV\to\UU)\in\Hasse(\tors \AA)$}\},
\end{align*}
and introduce the following two lattice theoretic notions for intervals in $\tors \AA$:

\begin{definition}[Definition \ref{definition:joinintervals}]
Let $[\UU,\TT]$ be an interval in $\tors \AA$.
\begin{itemize}
\item[(1)]
We call $[\UU,\TT]$ \textit{a join interval} if $\TT=\Join [\UU,\TT]^-$.
\item[(2)]
We call $[\UU,\TT]$ \textit{a meet interval} if $\UU=\Meet [\UU,\TT]^+$.
\end{itemize}
\end{definition}

Our second main result characterizes wide intervals as precisely the join and the meet intervals.

\begin{theorem}[Theorem \ref{theorem:joinintervals}]
Let $[\UU,\TT]$ be an interval in $\tors \AA$. 
The following conditions are equivalent:
\begin{itemize}
\item[(a)] The interval $[\UU,\TT]$ is a wide interval.
\item[(b)] The interval $[\UU,\TT]$ is a join interval.
\item[(c)] The interval $[\UU,\TT]$ is a meet interval.
\end{itemize}
\end{theorem}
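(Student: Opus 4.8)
The plan is to prove the equivalence (a)$\Leftrightarrow$(b) for every essentially small abelian length category and then deduce (a)$\Leftrightarrow$(c) by duality. Indeed, the order-reversing bijection $\tors\AA\to\tors(\AA^{\op})$ given by $\VV\mapsto\VV^{\perp}$ sends $[\UU,\TT]$ to $[\TT^{\perp},\UU^{\perp}]$, and by Lemma~\ref{lemma:intervalcategory} (together with ${}^{\perp}(\TT^{\perp})=\TT$) the subcategory associated to this interval in $\AA^{\op}$ is again $\TT\cap\UU^{\perp}=\WW$, so wide intervals correspond to wide intervals under this bijection. As it also reverses covering relations, it carries $[\UU,\TT]^{-}$ to $[\TT^{\perp},\UU^{\perp}]^{+}$ and joins to meets, hence join intervals to meet intervals. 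Thus, once (a)$\Leftrightarrow$(b) is known for all such categories, applying it to $\AA^{\op}$ yields (a)$\Leftrightarrow$(c) for $\AA$.

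For (a)$\Rightarrow$(b), I would use Theorem~\ref{theorem:reduction:intro}(1): the map $\Phi\colon[\UU,\TT]\to\tors\WW$, $\Phi(\VV)=\UU^{\perp}\cap\VV$, is an isomorphism of complete lattices with $\Phi(\UU)=0$ and $\Phi(\TT)=\WW$. Being an isomorphism of posets it preserves covers, hence restricts to a bijection between $[\UU,\TT]^{-}\setminus\{\UU\}$ and the set of atoms of $\tors\WW$; being an isomorphism of complete lattices it preserves arbitrary joins. So it suffices to show $\WW=\Join(\text{atoms of }\tors\WW)$ inside $\tors\WW$, which holds in any abelian length category: the atoms of $\tors\WW$ are exactly the $\T(S)=\Filt(\{S\})$ with $S\in\simple\WW$ (each is a minimal nonzero torsion class of $\WW$, and every nonzero torsion class of $\WW$ contains a simple object as a quotient), and $\Join_{S\in\simple\WW}\T(S)$ is the smallest torsion class of $\WW$ containing all simple objects, which is $\WW$ because every object of $\WW$ has a finite composition series. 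Transporting back along $\Phi$ gives $\TT=\Join([\UU,\TT]^{-}\setminus\{\UU\})=\Join[\UU,\TT]^{-}$.

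The substantial direction is (b)$\Rightarrow$(a). Write $[\UU,\TT]^{-}\setminus\{\UU\}=\{\VV_{i}\}_{i\in I}$, so $\UU\lessdot\VV_{i}$, and let $S_{i}$ be the brick labelling $\VV_{i}\to\UU$; set $\mathcal{S}=\{S_{i}\mid i\in I\}$. Since the brick labels of all Hasse arrows ending at $\UU$ form a semibrick (brick labelling, \cite{DIRRT}), $\mathcal{S}$ is a semibrick, so its extension closure $\WW_{0}:=\Filt(\mathcal{S})$ is a wide subcategory of $\AA$ whose simple objects are precisely the elements of $\mathcal{S}$; and $\WW_{0}\subseteq\WW=\UU^{\perp}\cap\TT$ because $\WW$ is extension-closed. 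Each $[\UU,\VV_{i}]$ is a wide interval (adjacent torsion classes, \cite{BCZ,DIRRT}) whose associated wide subcategory $\UU^{\perp}\cap\VV_{i}$ has the unique brick $S_{i}$ and therefore equals $\Filt(\{S_{i}\})$; so Theorem~\ref{theorem:reduction:intro} gives $\VV_{i}=\T(\UU,S_{i})$, and hence, using hypothesis (b), $\TT=\Join_{i}\VV_{i}=\T(\UU,\mathcal{S})=\T(\UU,\WW_{0})$. It then remains to prove $\WW\subseteq\WW_{0}$, for then $\WW=\WW_{0}$ is wide.

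This last inclusion is the main obstacle, and I would argue by induction on length. Assume for contradiction that some $M\in\WW$ of minimal length does not lie in $\WW_{0}$. From $\TT=\Filt(\bigcup_{i}\VV_{i})$ choose a finite filtration of $M$ with every subquotient in some $\VV_{i}$; since each $\VV_{i}$ is extension-closed and contains $\UU$, one may repeatedly absorb a top filtration quotient lying in $\UU$ into the step just below it. Either this collapses the filtration, so $M\in\VV_{i}$ for some $i$ and then $M=M/t_{\UU}M\in\UU^{\perp}\cap\VV_{i}=\Filt(\{S_{i}\})\subseteq\WW_{0}$, a contradiction; or it produces $N\subsetneq M$ with $N\in\TT$ and $0\neq M/N\in\VV_{i}\setminus\UU$, in which case $C:=(M/N)/t_{\UU}(M/N)$ is a nonzero object of $\UU^{\perp}\cap\VV_{i}=\Filt(\{S_{i}\})$, its preimage $M'$ in $M$ gives $0\to M'\to M\to C\to 0$ with $M'\in\UU^{\perp}\cap\TT=\WW$ of strictly smaller length, so $M'\in\WW_{0}$ by minimality and then $M\in\WW_{0}$ since $\WW_{0}$ is extension-closed, again a contradiction. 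The delicate point is precisely the absorption step and the passage to $\UU$-torsion-free quotients: this is where one genuinely uses that each $S_{i}$ is a Hasse-arrow label and not an arbitrary brick of $\WW$ (for a general wide subcategory $\WW_{0}\subseteq\UU^{\perp}$ one need not have $\UU^{\perp}\cap\T(\UU,\WW_{0})=\WW_{0}$). By contrast, both remaining directions are formal, resting only on Theorem~\ref{theorem:reduction:intro} and standard facts about semibricks and brick labelling.
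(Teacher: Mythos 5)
Your proposal is correct and follows the same overall strategy as the paper: $(a)\Rightarrow(b),(c)$ formal from Theorem~\ref{theorem:reduction}, $(a)\Leftrightarrow(c)$ dual to $(a)\Leftrightarrow(b)$ under the anti-isomorphism $\tors\AA\cong\torf\AA^\op\cong\tors(\AA^\op)$, and the substance in $(b)\Rightarrow(a)$, which in both cases amounts to proving $\WW=\Filt\LL$ with $\LL=\Label [\UU,\TT]^-$ a semibrick and invoking Ringel's bijection. The one place you diverge is inside that inductive step: the paper first proves $\TT=\Filt(\UU,\LL)$ and then, given $T\in\WW$, peels off a \emph{subobject} $S\in\LL$ of $T$ and forms a pullback along the canonical $(\UU,\UU^\perp)$-sequence of $T/S$; you instead take an arbitrary filtration of $M$ with subquotients in $\bigcup_i\VV_i$, absorb $\UU$-pieces from the top, and peel off a \emph{quotient} $C\in\Filt S_i$ of $M$. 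These are mirror-image arguments of comparable difficulty, and yours has the minor advantage of sidestepping the $\add$-closure subtlety in the definition of $\Filt(\UU,\LL)$: since each $\VV_i$ is already a torsion class, it is immediate that $\{M\mid M\text{ is filtered by objects of }\bigcup_i\VV_i\}$ is itself a torsion class, so it equals $\TT=\Join_i\VV_i$ with no appeal to $\Fac$ or $\add$. Apart from that, the two proofs are structurally identical, including the use of Proposition~\ref{proposition:incidentsemibrick} and Proposition~\ref{proposition:interval}.
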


Apart from wide intervals, there are other operations connecting torsion classes and wide subcategories.
First, the operation $\T$ given in (\ref{minimaltorsionclass}) defines a map from the set $\wide \AA$ of wide subcategories in $\AA$ to $\tors \AA$.
On the other hand, 
Ingalls--Thomas \cite{IT} and Marks--\v{S}\v{t}ov\'{i}\v{c}ek \cite{MS} 
introduced a map $\WL \colon \tors \AA \to \wide \AA$ 
(see (\ref{WL}) for the definition)
such that the composite
 $\WL \circ \T$ is the identity.
It is therefore important to understand the torsion classes in the image of the map $\T \colon \wide \AA \to \tors \AA$,
which we call \textit{widely generated torsion classes}.

For this purpose, we will first prove that the wide subcategory $\WL(\TT)$ is related to wide intervals as follows. 
Below, $\Serre(\WL(\TT))$ denotes the set of \textit{Serre subcategories} of $\WL(\TT)$.

\begin{theorem}[Theorem \ref{theorem:power}]
Let $\TT \in \tors \AA$. Taking labels gives a bijection
\begin{align*}
\{ \text{Hasse arrows in $\tors \AA$ starting at $\TT$} \} \to \simple(\WL(\TT)).
\end{align*}
Moreover, the map $\WW \mapsto \TT \cap {^\perp \WW}$ induces a bijection
\begin{align*}
\Serre(\WL(\TT)) \to \{ \UU \in \tors \AA \mid \text{$[\UU,\TT]$ is a wide interval}\}.
\end{align*}
\end{theorem}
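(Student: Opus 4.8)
The plan is to prove the two bijections separately, with the second one building on the first. For the first bijection, I would start from the known brick labeling of $\Hasse(\tors\AA)$: each Hasse arrow $\TT\to\VV$ carries a brick label $S$, and by the results of Demonet--Iyama--Reading--Reiten--Thomas, this $S$ lies in $\TT$ and in fact in the wide subcategory $\VV^\perp\cap\TT$. The key claim is that every such $S$ actually lies in $\WL(\TT)$, and conversely every simple object of $\WL(\TT)$ arises exactly once this way. For the ``lies in'' direction I would unwind the definition of $\WL$ in \eqref{WL} and observe that a brick labeling an arrow out of $\TT$ is generated by $\TT$ as a minimal-extension-closed situation, so it is killed by the appropriate maps; alternatively, invoke Theorem~\ref{theorem:power}'s companion statements about $\WL\circ\T$ being the identity together with the compatibility of brick labels under $\Phi$ from Theorem~\ref{theorem:reduction}(2), applied to the wide interval $[\T(\WL(\TT)),\TT]$ if one can show that this is a wide interval. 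Injectivity and surjectivity then reduce to the statement that distinct Hasse arrows out of $\TT$ carry distinct labels (a known feature of brick labeling) and that $\simple(\WL(\TT))$ is precisely the set of bricks in $\WL(\TT)$ that are ``reachable'' from $\TT$ by a single Hasse step, which is exactly Theorem~\ref{theorem:reduction:intro}(3) applied to a suitable wide interval ending at $\TT$.

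For the second bijection, the strategy is to use Theorem~\ref{theorem:joinintervals}: $[\UU,\TT]$ is a wide interval if and only if it is a meet interval, i.e.\ $\UU=\Meet[\UU,\TT]^+$. So I must show that the assignment $\WW\mapsto\TT\cap{}^\perp\WW$ (for $\WW$ a Serre subcategory of $\WL(\TT)$) lands among such $\UU$, and that every such $\UU$ is hit exactly once. The forward direction: given a Serre subcategory $\WW\subseteq\WL(\TT)$, set $\UU:=\TT\cap{}^\perp\WW$; I would check directly that $\UU$ is a torsion class (intersection of the torsion class $\TT$ with the torsion class ${}^\perp\WW$ --- the latter because $\WW$ is closed under subobjects, being Serre) and that $\UU^\perp\cap\TT=\WW$, which makes $[\UU,\TT]$ a wide interval by Definition~\ref{intro:definition:wideintervals}. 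The identity $\UU^\perp\cap\TT=\WW$ is the technical heart here: ``$\supseteq$'' is formal, and ``$\subseteq$'' should follow from the fact that $\WL(\TT)$ is the maximal wide subcategory inside $\TT$ compatible with the torsion structure, plus the Serre condition controlling which simples survive. For the inverse direction, given a wide interval $[\UU,\TT]$, one sets $\WW:=\UU^\perp\cap\TT$ and must show $\WW$ is a Serre subcategory of $\WL(\TT)$ and that $\TT\cap{}^\perp\WW=\UU$; the containment $\WW\subseteq\WL(\TT)$ and the Serre property should come from Theorem~\ref{theorem:reduction:intro}(3), which identifies $\simple\WW$ with labels of Hasse arrows starting at $\TT$, hence (by the first bijection) with a subset of $\simple(\WL(\TT))$ --- and a wide subcategory of a length category generated as a wide subcategory by a ``downward closed'' set of simples is precisely a Serre subcategory.

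The two constructions are then mutually inverse essentially by the equalities $\UU^\perp\cap\TT=\WW$ and $\TT\cap{}^\perp(\UU^\perp\cap\TT)=\UU$ established above, the second of which is where I expect the main obstacle to be: it amounts to recovering $\UU$ from the wide subcategory $\WW$, and while one inclusion is automatic, the reverse inclusion $\TT\cap{}^\perp\WW\subseteq\UU$ requires knowing that any object of $\TT$ orthogonal to all of $\WW$ already lies in $\UU$. I would prove this by contradiction using the Hasse-theoretic characterization: if $M\in\TT\cap{}^\perp\WW$ but $M\notin\UU$, then the interval $[\UU,\T(\UU,M)]$ contains a Hasse arrow out of $\T(\UU,M)$ (or into $\UU$), whose brick label lies in $\UU^\perp\cap\TT=\WW$ and receives or emits a nonzero map from $M$, contradicting $M\in{}^\perp\WW$. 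Making this last step precise --- locating the relevant brick and the nonzero $\Hom$ --- is the part that will need the most care, but all the needed tools (brick labeling, Theorem~\ref{theorem:reduction:intro}, and Theorem~\ref{theorem:joinintervals}) are available.
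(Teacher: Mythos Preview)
Your overall architecture matches the paper's: the first bijection is Proposition~\ref{proposition:leftwidesubcat}, and the second follows by combining Proposition~\ref{proposition:serremutations} (the map lands in wide intervals and is injective, since $\WW$ is recovered as $\UU^\perp\cap\TT$) with Theorem~\ref{theorem:leftwidesubcat} (every wide interval has $\UU^\perp\cap\TT\in\Serre(\WL(\TT))$, giving surjectivity). However, two points in your plan are off.

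First, for the inclusion $\Label[0,\TT]^+\subseteq\simple(\WL(\TT))$ you offer two routes, and neither quite works. The alternative route via ``the wide interval $[\T(\WL(\TT)),\TT]$'' points to the wrong interval: the associated subcategory of that interval is $\WL(\TT)^\perp\cap\TT$, not $\WL(\TT)$, and there is no reason it should be wide. The interval you actually want is $[\TT\cap{^\perp\WL(\TT)},\,\TT]$, whose associated subcategory \emph{is} $\WL(\TT)$ once you know Proposition~\ref{proposition:serremutations}; but this interval only gives you the \emph{reverse} inclusion $\simple(\WL(\TT))\subseteq\Label[0,\TT]^+$ via Theorem~\ref{theorem:reduction}(3). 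For the forward inclusion you genuinely need to check, for an arrow $\TT\xrightarrow{S}\UU$, that every map $X\to S$ with $X\in\TT=\T(\UU,S)$ is zero or epic with kernel in $\TT$. This is Lemma~\ref{lemma:littleboat} in the paper, proved by induction on a $\Filt(\UU,\Fac S)$-filtration of $X$; your phrase ``unwind the definition of $\WL$'' gestures at this but does not supply the argument, and it cannot be replaced by purely lattice-theoretic reasoning.

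Second, your anticipated ``main obstacle'' $\TT\cap{^\perp\WW}\subseteq\UU$ is much easier than the Hasse-theoretic contradiction you sketch. Take $X\in\TT\cap{^\perp\WW}$ and apply Lemma~\ref{lemma:intervalcategory}: there is an exact sequence $0\to U\to X\to W\to 0$ with $U\in\UU$ and $W\in\WW$; since $X\in{^\perp\WW}$ the epimorphism $X\to W$ vanishes, so $W=0$ and $X\cong U\in\UU$. No bricks or Hasse arrows required. Conversely, what you call the ``technical heart,'' the equality $\UU^\perp\cap\TT=\WW$ for $\UU:=\TT\cap{^\perp\WW}$, does need a real argument (this is Proposition~\ref{proposition:serremutations}, proved via Lemma~\ref{lemma:leftwidesubcat}); your proposal correctly flags it but does not indicate how to prove the nontrivial inclusion $\UU^\perp\cap\TT\subseteq\WW$.
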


As a consequence, we obtain the following characterization of widely generated torsion classes:

\begin{theorem}[Theorem \ref{theorem:widetorsionclass}]
For $\TT\in\tors \AA$ the following conditions are equivalent:
\begin{itemize}
\item[(a)] $\TT$ is a widely generated torsion class.
\item[(b)] $\TT=\T(\WL(\TT))$.
\item[(c)] $\TT=\T(\LL)$, where $\LL$ is the set of labels of Hasse arrows in $\tors \AA$ starting at $\TT$.
\item[(d)] For every $\UU\in\tors \AA$ with $\UU\subsetneq\TT$, there exists a Hasse arrow $\TT \to \UU'$ such that $\UU \subseteq \UU'$.
\end{itemize}
\end{theorem}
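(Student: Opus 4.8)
The plan is to establish the cycle of implications $(a)\Rightarrow(b)\Rightarrow(c)\Rightarrow(d)\Rightarrow(a)$, leaning heavily on the structural results already in hand, in particular Theorem~\ref{theorem:power} and the fact that $\WL\circ\T=\id$.

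First I would dispose of $(a)\Rightarrow(b)$. If $\TT$ is widely generated, say $\TT=\T(\WW)$ for some $\WW\in\wide\AA$, then applying $\WL$ gives $\WL(\TT)=\WL(\T(\WW))=\WW$, so $\TT=\T(\WW)=\T(\WL(\TT))$. The implication $(b)\Rightarrow(c)$ is essentially a reformulation: Theorem~\ref{theorem:power} identifies the labels $\LL$ of Hasse arrows starting at $\TT$ with $\simple(\WL(\TT))$, and since any wide subcategory $\WW$ satisfies $\T(\WW)=\T(\simple\WW)$ (every object of $\WW$, being an abelian length category, is filtered by its simples), we get $\T(\WL(\TT))=\T(\simple(\WL(\TT)))=\T(\LL)$; combine with $(b)$. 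The implication $(c)\Rightarrow(b)$ (which I would also note, though the stated cycle does not strictly need it) runs the same way.

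For $(c)\Rightarrow(d)$, suppose $\TT=\T(\LL)$ and let $\UU\subsetneq\TT$ be a torsion class. If no Hasse arrow $\TT\to\UU'$ had $\UU\subseteq\UU'$, then every label $S\in\LL$ would be the brick of some arrow $\TT\to\UU'$ with $\UU\not\subseteq\UU'$, which — using the brick labeling machinery of \cite{DIRRT} and the description of $\UU'$ as the largest torsion class in $[\,\cdot\,,\TT]$ not containing $S$ — forces $S\in\UU$ for each such $S$ (here I would invoke that $\UU'$ is obtained from $\TT$ by ``removing'' $S$, so $S\notin\UU'$ together with $\UU\subseteq$ something forces $S\in\UU$ via the lattice-theoretic characterization of Hasse arrows). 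Hence $\LL\subseteq\UU$, and since $\UU$ is a torsion class we get $\TT=\T(\LL)\subseteq\UU$, contradicting $\UU\subsetneq\TT$.

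Finally $(d)\Rightarrow(a)$: set $\WW:=\WL(\TT)$ and $\UU:=\T(\WW)$; by $\WL\circ\T=\id$ and general properties of these maps one has $\UU\subseteq\TT$. If $\UU\subsetneq\TT$, then by $(d)$ there is a Hasse arrow $\TT\to\UU'$ with $\UU\subseteq\UU'\subsetneq\TT$; the label $S$ of this arrow lies in $\WL(\TT)=\WW$ by Theorem~\ref{theorem:power}, hence $S\in\T(\WW)=\UU\subseteq\UU'$, contradicting the fact that the label of $\TT\to\UU'$ does not belong to $\UU'$ (this is part of the brick-labeling setup: the brick $S$ of $\TT\to\UU'$ satisfies $S\in\TT\setminus\UU'$). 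Therefore $\UU=\TT$, i.e.\ $\TT=\T(\WL(\TT))$ is widely generated. I expect the main obstacle to be the implication $(c)\Rightarrow(d)$: pinning down precisely why ``$S\notin\UU'$ for every arrow $\TT\to\UU'$ avoiding $\UU$'' forces each label into $\UU$ requires careful use of the lattice-theoretic description of Hasse arrows and the uniqueness of the brick label, and it is here that one must be most careful not to circularly invoke the wide-interval characterization one is trying to complement.
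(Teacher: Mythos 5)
Your cycle $(a)\Rightarrow(b)\Rightarrow(c)\Rightarrow(d)\Rightarrow(a)$ follows essentially the same route as the paper's proof, which chains $(a)\Leftrightarrow(b)$ (Proposition~\ref{proposition:MS}), $(b)\Leftrightarrow(c)$ (Proposition~\ref{proposition:leftwidesubcat}), and $(c)\Leftrightarrow(d)$. Your $(d)\Rightarrow(a)$ argument is the same as the paper's $(d)\Rightarrow(c)$ in disguise: a Hasse arrow $\TT\xto{S}\UU'$ with $\T(\WL(\TT))\subseteq\UU'$ is impossible because $S\in\simple(\WL(\TT))\subseteq\UU'$ while $S\in(\UU')^\perp$.

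The genuine gap is in $(c)\Rightarrow(d)$, which you correctly flag as the delicate step. You want to conclude that $\UU\not\subseteq\UU'$ forces $S\in\UU$, where $\TT\xto{S}\UU'$ is a Hasse arrow and $\UU\subseteq\TT$, and you propose to do this ``via the lattice-theoretic characterization of Hasse arrows.'' But the needed fact --- that $\UU'=\TT\cap{^\perp S}$ is the \emph{largest} torsion class inside $[0,\TT]$ not containing $S$, so that any $\UU\subseteq\TT$ with $S\notin\UU$ is automatically $\subseteq\UU'$ --- is not a formal consequence of the lattice structure. It requires the categorical Lemma~\ref{lemma:littleboat}(1): every morphism $f\colon X\to S$ with $X\in\TT$ is zero or epic. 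Given that, the paper argues directly: since $\TT=\T(\LL)$ and $\UU\subsetneq\TT$, pick $S\in\LL$ with $S\notin\UU$ and set $\UU':=\TT\cap{^\perp S}$; for $U\in\UU$ and $f\colon U\to S$, $f$ cannot be epic (else $S\in\Fac U\subseteq\UU$), so $f=0$, giving $\UU\subseteq{^\perp S}\cap\TT=\UU'$. Your contradiction-style version of the argument becomes correct once you insert Lemma~\ref{lemma:littleboat} at this point; without naming it, the step does not close, and the appeal to purely lattice-theoretic considerations is misleading.
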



\section{Setting and notation}\label{section:setting}

In this section we recall some fundamental facts about partially ordered sets and abelian length categories and describe the standing assumptions of this paper.

\subsection{Partially ordered sets}

Let $L=(L,\le)$ be a partially ordered set.

First, we define the \textit{opposite partially ordered set} 
$L^\op:=(L^\op,\le^\op)$ of $L$
with the same underlying set $L=L^\op$ and the partial order $\le^\op$
such that $y\le^\op x$ holds
if and only if $x\le y$ for all $x,y\in L$.

An \textit{isomorphism of partially ordered sets} $\Phi \colon (L,\le)\to(L',\le')$ 
is a bijection of sets $\Phi:L\to L'$ such that 
$\Phi(y)\le\Phi(x)$ if and only if $y\le x$.

An \textit{interval} in $L$ is a subset of the form
\begin{align*}
[y,x]:=\{z\in L \mid y\le z\le x\}
\end{align*}
where $y\le x$ are elements of $L$. 

The \textit{Hasse quiver} $\Hasse L$ has $L$ as its set of vertices, and there is an arrow $x\to y$ for $x,y\in L$ if and only if $y<x$ and $[y,x]=\{y,x\}$. 

For an interval $[y,x]$ in $L$, 
we define its set of \textit{upper} (resp.~ \textit{lower}) \textit{elements} as follows:
\begin{align*}
[y,x]^+&:=\{x\}\cup\{z\in[y,x] \mid \text{there exists $(x\to z)\in\Hasse L$}\},\\
[y,x]^-&:=\{y\}\cup\{z\in[y,x] \mid \text{there exists $(z\to y)\in\Hasse L$}\}.
\end{align*}
In particular, if $y=x$, then $[y,x]^+=[y,x]^-=\{x\}$.

Let $S\subseteq L$ be a subset. A \textit{meet} (resp.~ \textit{join}) of $S$ is a largest (resp.~ smallest) element in the set $\{y \in L \mid \text{$y \le x$ for all $x \in S$}\}$
(resp.~ $\{y \in L \mid \text{$y \ge x$ for all $x \in S$}\}$).
If a meet (resp.~ join) of $S$ exists, then it is necessarily unique, 
and we will refer to it as the meet (resp.~ the join) of $S$ denoted by 
\begin{align*}
\Meet S=\Meet_{x\in S}x 
\quad 
\left(\text{resp.~} \Join S=\Join_{x\in S}x\right).
\end{align*}
If a meet and a join of every subset of $L$ exists, we call $L$ a \textit{complete lattice}.

\subsection{Abelian length categories}

In this paper, every category is assumed to be essentially small, 
and every subcategory is a full subcategory closed under isomorphism classes. 
Throughout $\AA$ always denotes an \textit{abelian length category}, 
that is, every object $X \in \AA$ 
has a finite filtration 
$0 = X_0 \subsetneq X_1 \subsetneq \cdots \subsetneq X_l = X$
where each subfactor $X_{i+1}/X_i$ a simple object in $\AA$.

The \textit{opposite category} $\AA^\op$ of $\AA$ is 
defined to be the category with the same objects as $\AA$,
but $\Hom_{\AA^\op}(X,Y):=\Hom_{\AA}(Y,X)$ for all $X,Y\in\AA$. 
Note that $\AA^\op$ is again an essentially small abelian length category if $\AA$ is so.

For a subcategory $\CC\subseteq\AA$ define the following subcategories of $\AA$:
\begin{align*}
\add \CC &:=\{X \in \AA \mid \text{there exist $C \in\CC$, $n\in\naturals$ and a split epimorphism $C^n\twoheadrightarrow X$}\}, \\
\Filt \CC &:=\{X \in \AA \mid \text{there exists $0=X_0 \subseteq X_1 \subseteq \cdots \subseteq X_n=X$ with $X_i/X_{i-1} \in \add \CC$}\}, \\
\Fac \CC &:=\{X \in \AA \mid 
\text{there exist $C \in \add \CC$ and an epimorphism 
$C\twoheadrightarrow X$ in $\AA$} \}, \\
\Sub \CC &:=\{X \in \AA \mid 
\text{there exist $C\in\add \CC$ and a monomorphism 
$X\hookrightarrow C$ in $\AA$}\}, \\
\CC^\perp &:= \{X \in \AA \mid 
\text{for all $C\in\CC$,  $\Hom_\AA(C,X)=0$}\}, \\
{^\perp}\CC &:= \{X \in \AA \mid 
\text{for all $C\in\CC$,  $\Hom_\AA(X,C)=0$}\}.
\end{align*}
We will use abbreviation rules such as
\begin{align*}
\add (\CC_1,\ldots,\CC_m,X_1,\ldots,X_n):=\add (\CC_1\cup\cdots\cup\CC_m\cup\{X_1,\ldots,X_n\})
\end{align*}
for $\CC_1,\ldots,\CC_m \subseteq \AA$ and $X_1,\ldots,X_n \in \AA$.
Also, if $\CC,\CC' \subseteq \AA$, then we write $\CC * \CC'$ for the full subcategory 
of objects $X$ admitting a short exact sequence
\[
0 \to C \to X \to C' \to 0
\] 
with $C \in \CC$ and $C' \in \CC'$.

A subcategory $\WW\subseteq\AA$ is called \textit{wide} if $\WW$ is closed under kernels, cokernels and extensions in $\AA$. In particular, every wide subcategory of $\AA$ is an abelian subcategory. We denote by $\simple \WW$ the set of isomorphism classes of simple objects in $\WW$. By abuse of notation we will not always distinguish between isomorphism classes in $\simple \WW$ and their representatives. Write $\wide \AA$ for the set of wide subcatgeories of $\AA$.

A \textit{brick} in $\AA$ is an object $S\in\AA$ such that $\End_A(S)$ is a division ring. For a subcategory $\CC\subseteq\AA$, let $\brick \CC$ be the set of isomorphism classes of bricks in $\CC$.  A set $\S$ of isomorphism classes of bricks in $\AA$ is called a \textit{semibrick} if its elements are pairwise $\Hom$-orthogonal, that is, $\Hom_A(S,S')=0$ for any two representatives $S$ and $S'$ of distinct isomorphism classes in $\S$. Let $\sbrick \AA$ be the set of semibricks in $\AA$.

A classical result of Ringel \cite[1.2]{R} tells us that $\Filt$ and $\simple$ induce mutually inverse bijections
\[
\xymatrix@C4em{
\sbrick \AA\ar@<0.5ex>[r]^{\Filt}&\ar@<0.5ex>[l]^{\simple}\wide \AA.
}
\]

A subcategory $\WW$ of $\AA$ is called
a \textit{Serre subcategory} if it is closed under extensions, factor objects and subobjects. Since $\AA$ is an abelian length category, a subcategory $\WW\subseteq\AA$ is a Serre subcategory if and only if $\WW=\Filt \S$ for a subset $\S\subseteq\simple\AA$.
We denote the set of Serre subcategories of $\AA$ by $\Serre \AA$.
This set can be identified with the power set of $\simple \AA$.

A \textit{torsion pair} $(\TT,\FF)$ in $\AA$ is a pair of subcategories $\TT,\FF\subseteq\AA$ such that 
\begin{align*}
\FF=\TT^\perp \text{ and } \TT={^\perp\FF}.
\end{align*}
For a torsion pair $(\TT,\FF)$ and every object $X\in\AA$,
there exists a short exact sequence
\begin{align}\label{canonicalsequence}
0\to t_{\TT}X\to X\to f_{\FF}X\to 0
\end{align}
with $t_{\TT}X\in\TT$ and $f_{\FF}X\in\FF$, unique up to isomorphism. We will refer to (\ref{canonicalsequence}) as \textit{the canonical sequence} of $X$ with respect to the torsion pair $(\TT,\FF)$. Note that the subcategory $\TT$ in a torsion pair $(\TT,\FF)$ is closed under extensions and taking factor objects. Dually, the subcategory $\FF$ is closed under extensions and subobjects. Conversely, one can show that, given a subcategory $\TT\subseteq\AA$ closed under extensions and factor objects, the pair $(\TT,\TT^\perp)$ is a torsion pair, and we call such subcategories $\TT$ \textit{torsion classes}. The set of torsion classes in $\AA$ is denoted by $\tors \AA$. Dually, if $\FF\subseteq\AA$ is a subcategory closed under extensions and subobjects, then $({^\perp\FF},\FF)$ is a torsion pair, and we call $\FF$ a torsion-free class. The set of torsion-free classes in $\AA$ is denoted by $\torf \AA$.

The sets $\tors \AA$ and $\torf \AA$ are partially ordered by inclusion and closed under arbitrary intersections. In particular, given a full subcategory $\X\subseteq\AA$,
\begin{align*}
\T(\X):=\bigcap_{\TT\in\tors \AA; \X\subseteq\TT}\TT \quad \text{and} \quad \F(\X):=\bigcap_{\FF\in\torf \AA; \X\subseteq\FF}\FF
\end{align*}
are the smallest torsion class and the smallest torsion-free class containing $\X$ respectively. It is well known that $\T=\Filt \circ \Fac$ and $\F=\Filt \circ \Sub$, see \cite[Lemma 3.1]{MS} for a proof. These operators show that $\tors \AA$ and $\torf \AA$ are complete lattices with joins and meets given by
\begin{align*}
\Join S&=\Join_{\TT\in S}\TT=\T\left(\bigcup_{\TT \in S} \TT\right), & 
\Meet S&=\Meet_{\TT\in S}\TT=\bigcap_{\TT \in S} \TT, \\
\Join S'&=\Join_{\FF\in S'}\FF=\F\left(\bigcup_{\FF \in S'} \FF\right), & 
\Meet S'&=\Meet_{\FF\in S'}\FF=\bigcap_{\FF \in S'} \FF
\end{align*}
for $S \subseteq \tors \AA$ and $S' \subseteq \torf \AA$.
We remark that $(\T(\X),\X^\perp)$ and $({^\perp \X},\F(\X))$ are torsion pairs in $\AA$ for any full subcategory $\X \subseteq \AA$.

In examples, we sometimes let $A$ be a finite-dimensional algebra over a field $K$.
Then we let $\AA=\mod A$ be the category of finite-dimensional $A$-modules,
and write $\tors A:=\tors(\mod A)$.
We write $\proj A$ for the category of finitely generated projective $A$-modules.

\section{Brick labeling}\label{section:bricklabeling}

Let $[\UU,\TT]$ be an interval in $\tors \AA$. 
The subcategory $\UU^\perp\cap\TT$ measures the ``difference'' between $\UU$ and $\TT$ in the following sense:

\begin{lemma}\label{lemma:intervalcategory}
Let $[\UU,\TT]$ be an interval in $\tors \AA$ and set $\WW:=\UU^\perp\cap\TT$. Then for every $T\in\TT$ there is a short exact sequence
\begin{align*}
0\to U\to T\to W\to 0
\end{align*}
with $U\in\UU$ and $W\in\WW$, unique up to isomorphism of short exact sequences.
Thus $\TT=\UU*\WW$.
\end{lemma}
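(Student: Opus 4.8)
The plan is to produce the short exact sequence by a two-step filtration using the torsion pair machinery already available, and then to verify uniqueness and the resulting factorization $\TT = \UU * \WW$.

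\medskip

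\noindent\textbf{Construction of the sequence.} Let $T \in \TT$. Since $\UU$ is a torsion class, $(\UU, \UU^\perp)$ is a torsion pair, so $T$ has a canonical sequence
\begin{align*}
0 \to t_\UU T \to T \to f_{\UU^\perp} T \to 0
\end{align*}
with $U := t_\UU T \in \UU$ and $W := f_{\UU^\perp} T \in \UU^\perp$. The first task is to check $W \in \TT$. Since $\UU \subseteq \TT$ we have $U \in \TT$, and $T \in \TT$ by hypothesis; as $\TT$ is closed under factor objects (it is a torsion class), $W$, being a quotient of $T$, lies in $\TT$. Hence $W \in \UU^\perp \cap \TT = \WW$, and we set $U \in \UU$, $W \in \WW$ as desired. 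This already gives $\TT \subseteq \UU * \WW$; the reverse inclusion $\UU * \WW \subseteq \TT$ holds because $\UU \subseteq \TT$, $\WW \subseteq \TT$, and $\TT$ is closed under extensions. Thus $\TT = \UU * \WW$.

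\medskip

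\noindent\textbf{Uniqueness.} Suppose $0 \to U' \to T \to W' \to 0$ is another such sequence with $U' \in \UU$ and $W' \in \WW \subseteq \UU^\perp$. I would argue that this sequence \emph{is} the canonical sequence of $T$ with respect to the torsion pair $(\UU, \UU^\perp)$, and then invoke the uniqueness (up to isomorphism) of the canonical sequence recalled after \eqref{canonicalsequence}. Concretely, $U' \in \UU$ and $W' \in \UU^\perp$ are exactly the two membership conditions defining the canonical sequence, so uniqueness is immediate; one then upgrades "isomorphic objects in the two sequences" to "isomorphism of short exact sequences" by the standard fact that a morphism of short exact sequences which is an isomorphism on the outer terms is an isomorphism of complexes (five lemma, or a direct diagram chase using that $U' = t_\UU T$ is the unique largest subobject of $T$ lying in $\UU$).

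\medskip

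\noindent\textbf{Main obstacle.} There is essentially no hard step here: everything reduces to the existence and uniqueness of canonical sequences for the torsion pair $(\UU, \UU^\perp)$, which is available once we know $\UU \in \tors \AA$. The only mild subtlety is the bookkeeping to see that $W$ lands in $\TT$ (closure under quotients) and that the abstract uniqueness of the object $W = f_{\UU^\perp}T$ promotes to uniqueness of the sequence as a short exact sequence; both are routine. I would keep the write-up to a few lines, citing Lemma-free facts from Section~\ref{section:setting} about torsion pairs and the canonical sequence \eqref{canonicalsequence}.
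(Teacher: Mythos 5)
Your proposal is correct and follows essentially the same route as the paper: take the canonical sequence for $T$ with respect to the torsion pair $(\UU,\UU^\perp)$, observe that the torsion-free part lands in $\TT$ (hence in $\WW$) because $\TT$ is closed under factor objects, and invoke uniqueness of the canonical sequence. The only cosmetic difference is that you spell out the "upgrade to isomorphism of short exact sequences" step, which the paper leaves implicit; this is harmless and correct.
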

\begin{proof}
Take the canonical sequence (\ref{canonicalsequence}) for $T$ with respect to the torsion pair $(\UU,\UU^\perp)$ and let $U:=t_{\UU}T\in\UU$ and $W:=f_{\UU^\perp}T\in\UU^\perp$. 
Since $\TT$ is closed under factor objects, we obtain $W\in\TT$; 
hence $W\in\UU^\perp\cap\TT=\WW$.
Conversely, the desired exact sequence is unique up to isomorphism 
by the uniqueness of the canonical sequence for $T$.
Thus we obtain the first statement, which readily implies that $\TT=\UU*\WW$.
\end{proof}

In \cite{DIRRT}, the following fundamental properties of intervals in $\tors \AA$ are established.

\begin{proposition}\label{proposition:interval}
Let $[\UU,\TT]$ be an interval in $\tors \AA$. 
\begin{itemize}
\item[(1)]
\cite[Lemma 3.10]{DIRRT}
Then $\UU^\perp\cap\TT=\Filt(\brick(\UU^\perp\cap\TT))$.
\item[(2)]
\cite[Theorems 3.3, 3.4]{DIRRT}
There is an arrow $q \colon \TT\to\UU$ in $\Hasse(\tors \AA)$ if and only if there exists a unique brick $S_q \in \brick(\UU^\perp\cap\TT)$ up to isomorphism. In this case,
\begin{align*}
\UU^\perp\cap\TT=\Filt S_q
\end{align*}
and, moreover, $\UU=\TT \cap {^\perp S_q}$ and $\TT=\T(\UU,S_q)$.
\end{itemize}
\end{proposition}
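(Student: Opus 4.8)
The plan is to establish part~(1) first and then derive part~(2) from it. Throughout write $\WW:=\UU^\perp\cap\TT$, and recall from Lemma~\ref{lemma:intervalcategory} that $\TT=\UU*\WW$, so that $\UU\subsetneq\TT$ as soon as $\WW\neq 0$.

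\emph{Part (1).} The inclusion $\Filt(\brick\WW)\subseteq\WW$ is immediate: $\WW$ is closed under extensions (both $\UU^\perp$ and $\TT$ are) and under direct summands (as $\UU^\perp$ is closed under subobjects and $\TT$ under quotients), so $\add(\brick\WW)\subseteq\WW$ and hence $\Filt(\brick\WW)\subseteq\WW$. For the converse I would argue by induction on length, using the following observation (which also recurs in part~(2)): \emph{every nonzero $W\in\WW$ has a subobject that is a brick of $\WW$.} To see this, choose $S\subseteq W$ of minimal length among the nonzero subobjects of $W$ that lie in $\WW$. For any $0\neq f\in\End_\AA(S)$ the image $\Image f$ is at once a subobject of $W\in\UU^\perp$, hence lies in $\UU^\perp$, and a quotient of $S\in\TT$, hence lies in $\TT$; so $\Image f$ is a nonzero subobject of $W$ in $\WW$, and minimality forces $\Image f=S$. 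Thus $f$ is an epimorphism and, $S$ having finite length, an isomorphism, so $S$ is a brick. Given such an $S$, one would like to finish via the short exact sequence $0\to S\to W\to W/S\to 0$ and the inductive hypothesis applied to $W/S$. The main obstacle is that $W/S$, while lying in $\TT$, need not lie in $\UU^\perp$, hence need not be an object of $\WW$; one gets around this by passing to the canonical sequence $0\to t_\UU(W/S)\to W/S\to W'\to 0$ of $W/S$ with respect to the torsion pair $(\UU,\UU^\perp)$, noting that $W':=f_{\UU^\perp}(W/S)\in\WW$ is strictly shorter than $W$, and by running the induction so as to absorb the $\UU$-torsion. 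The point that makes this go through is the inclusion $\TT^\perp\subseteq\UU^\perp$ (valid because $\UU\subseteq\TT$), which keeps the $(\TT,\TT^\perp)$-truncations that appear inside $\UU^\perp$; this bookkeeping is the delicate heart of part~(1).

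\emph{Part (2).} I will use freely that $^\perp S$ is a torsion class for every object $S$ (it is closed under quotients and under extensions, by the long exact sequence for $\Hom(-,S)$) and that $\T(\UU,S)$ is a torsion class containing $\UU$. Assume first that $\WW$ contains, up to isomorphism, a unique brick $S$. By part~(1), $\WW=\Filt S$, whose only brick is $S$ itself (an abelian length category with a single simple object $S$ has no other brick, since a longer indecomposable $B$ admits the nonzero, non-invertible endomorphism $B\twoheadrightarrow S\hookrightarrow B$). In particular $\WW\neq 0$, so $\UU\subsetneq\TT$. Let $\VV\in\tors\AA$ satisfy $\UU\subseteq\VV\subseteq\TT$; applying part~(1) to $[\UU,\VV]$ gives $\UU^\perp\cap\VV=\Filt(\brick(\UU^\perp\cap\VV))$, and since $\UU^\perp\cap\VV\subseteq\WW=\Filt S$ we get $\brick(\UU^\perp\cap\VV)\subseteq\{S\}$, so $\UU^\perp\cap\VV\in\{0,\Filt S\}$, whence $\VV=\UU*(\UU^\perp\cap\VV)\in\{\UU,\TT\}$ by Lemma~\ref{lemma:intervalcategory}. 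Thus there is no torsion class strictly between $\UU$ and $\TT$, i.e.\ there is an arrow $q\colon\TT\to\UU$, with $\UU^\perp\cap\TT=\Filt S=\Filt S_q$. Furthermore $^\perp S$ is a torsion class with $\UU\subseteq\TT\cap{}^\perp S\subsetneq\TT$ (the inclusion because $S\in\UU^\perp$, the properness because $\Hom(S,S)\neq 0$) and $\UU\subsetneq\T(\UU,S)\subseteq\TT$ (since $0\neq S\in\UU^\perp$ gives $S\notin\UU$); as there is no proper intermediate term, $\UU=\TT\cap{}^\perp S_q$ and $\TT=\T(\UU,S_q)$.

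Conversely, assume there is an arrow $\TT\to\UU$, i.e.\ $\UU\subsetneq\TT$ with no torsion class strictly between. Then $\WW\neq 0$, so $\brick\WW\neq\emptyset$ by part~(1); fix a brick $S\in\brick\WW$ of minimal length. As in the previous paragraph, $\UU\subseteq\TT\cap{}^\perp S\subsetneq\TT$ and $\UU\subsetneq\T(\UU,S)\subseteq\TT$, hence $\UU=\TT\cap{}^\perp S$ and $\TT=\T(\UU,S)$. It remains to prove $\brick\WW=\{S\}$. Suppose $S'\in\brick\WW$ with $S'\not\cong S$. Then $\Hom(S',S)\neq 0$: otherwise $S'\in{}^\perp S$, and since $S'\in\WW\subseteq\TT$ this would force $S'\in\TT\cap{}^\perp S=\UU$, so $S'\in\UU\cap\UU^\perp=0$, impossible; symmetrically $\Hom(S,S')\neq 0$. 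Any composite $S'\xrightarrow{\phi}S\xrightarrow{\psi}S'$ is zero — otherwise it is an isomorphism, making $\phi$ a split monomorphism and $S'$ a direct summand of the indecomposable $S$, so $S'\cong S$. Now set $I:=\sum_{\phi\in\Hom(S',S)}\Image\phi$. It is nonzero; it is a quotient of a finite direct sum of quotients of $S'\in\TT$, hence lies in $\TT$; it is a subobject of $S\in\UU^\perp$, hence lies in $\UU^\perp$; and, since $\psi$ annihilates every $\Image\phi$, it is contained in $\Kernel\psi\subsetneq S$ for each nonzero $\psi\in\Hom(S,S')$. Thus $I$ is a nonzero subobject of $S$ lying in $\WW$ and strictly shorter than $S$, so by the observation in part~(1) $S$ has a brick subobject strictly shorter than $S$, contradicting minimality. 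Hence $\brick\WW=\{S\}$ and $\WW=\Filt S$ by part~(1), which completes the proof.
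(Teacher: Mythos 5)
Since the paper cites \cite[Lemma 3.10, Theorems 3.3, 3.4]{DIRRT} for this proposition without reproducing a proof, the relevant question is whether your proof is correct, not whether it matches the paper's.

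Your part (2) is, modulo part (1), a clean and correct argument. In particular the use of the trace $I=\sum_{\phi}\Image\phi$ to produce, from two non-isomorphic bricks $S,S'$ in $\WW$, a nonzero subobject of $S$ in $\WW$ that is strictly shorter than $S$, is a nice way to establish uniqueness of the brick. Two very small remarks there: the phrase ``symmetrically $\Hom(S,S')\neq 0$'' implicitly uses $\TT\cap{}^\perp S'=\UU$ (not just $\TT\cap{}^\perp S=\UU$), which holds for the same reason but deserves a word; and when you conclude ``$S$ has a brick subobject strictly shorter than $S$,'' what is actually used is that $I$ has a brick subobject of $\WW$ of length $<\ell(S)$, contradicting minimality of $\ell(S)$ among lengths of bricks in $\WW$ (not among brick subobjects of $S$). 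Neither affects the correctness.

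Part (1), however, has a genuine gap exactly where you flag it. You correctly prove that every nonzero $W\in\WW$ has a brick $S\in\brick\WW$ as a subobject (by taking a nonzero subobject of $W$ in $\WW$ of minimal length and showing it is a brick; here you are implicitly using that $\WW=\UU^\perp\cap\TT$ is closed under images, which is true). You then form $0\to S\to W\to W/S\to 0$, pass to the canonical sequence $0\to U\to W/S\to W'\to 0$ with $U=t_{\UU}(W/S)\in\UU$, $W'\in\UU^\perp\cap\TT=\WW$, and note $\ell(W')<\ell(W)$. But $W$ is then an iterated extension of $W'$ by $P$ (the pullback) and $P$ is an extension of $U\in\UU$ by $S$. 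The term $U$ lives in $\UU$, which is disjoint from $\WW$ apart from $0$; it does \emph{not} ``absorb'' into $\Filt(\brick\WW)$. If $W'\neq 0$ one can apply induction to both $W'$ and $P$ (both strictly shorter), but in the boundary case $W'=0$ one has $P=W$, $W/S\in\UU$, and the induction does not reduce the length at all. Your phrase ``running the induction so as to absorb the $\UU$-torsion'' does not describe a step that closes this case, and the stated ``point that makes this go through,'' namely $\TT^\perp\subseteq\UU^\perp$, is true but irrelevant here: the obstruction is $\UU$-torsion inside $W/S$, not any $\TT^\perp$-truncation. What is actually needed at this juncture is, in effect, a statement of the form ``$P\in\UU^\perp$ together with $P/S\in\UU$ forces $P\in\Filt S$'' — essentially the hard content of $\UU^\perp\cap\T(\UU,S)=\Filt S$, which is what part (2) is meant to deliver, so one cannot simply quote it. As written, this is a missing idea rather than routine bookkeeping, and it propagates to part (2), which relies on part (1) throughout.
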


%

Proposition \ref{proposition:interval} gives rise to the following brick labeling.

\begin{definition}[{\cite[Definition 3.5]{DIRRT}}]\label{definition:bricklabeling}
The \textit{brick label} of an arrow $q \colon \TT \to \UU$ in $\Hasse(\tors \AA)$ is the isomorphism class of the brick $S_q$ from Proposition \ref{proposition:interval}.
In this case, we write $\TT \xto{S_q} \UU$. 
\end{definition}

The brick labeling may be compared to minimal extending modules for $\UU$ as defined in Barnard--Carroll--Zhu \cite{BCZ}. The minimal extending modules for $\UU$ are precisely the labels of arrows ending in $\UU$,
see \cite[Subsection 2.2]{BCZ}.

Dually, define brick labeling of $\Hasse(\torf \AA)$ so that
an arrow $\FF \to \GG$ is labeled by the unique brick in
${^\perp} \GG \cap \FF$. There is the following strong relationship between $\tors \AA$ and $\torf \AA$ compatible with their brick labelings.

\begin{proposition}\label{proposition:duality}
The construction of right and left $\Hom$-orthogonal subcategories defines isomorphisms of partially ordered sets
\begin{align*}
\xymatrix@R1.2em@C4em{
(\tors \AA)^\op\ar@<0.5ex>[r]^{{\bullet}^\perp}&\ar@<0.5ex>[l]^{{^\perp\bullet}}\torf \AA.
}
\end{align*}
Moreover, these isomorphisms preserve the brick labeling in the following sense: The brick label of $\TT\to\UU$ in $\Hasse(\tors \AA)$ is the same as the brick label of $\UU^\perp\to\TT^\perp$ in $\Hasse(\torf \AA)$.
\end{proposition}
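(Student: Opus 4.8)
The plan is to split the statement into two halves: first the order-theoretic claim that $\bullet^\perp$ and $^\perp\bullet$ are mutually inverse anti-isomorphisms between $\tors\AA$ and $\torf\AA$, and then the compatibility with brick labelings. For the first half I would argue as follows. Given $\TT\in\tors\AA$, the pair $(\TT,\TT^\perp)$ is a torsion pair, so $\TT^\perp$ is a torsion-free class, i.e.\ $\TT^\perp\in\torf\AA$; dually $^\perp\FF\in\tors\AA$ for $\FF\in\torf\AA$. The defining equations of a torsion pair, $\FF=\TT^\perp$ and $\TT={}^\perp\FF$, say precisely that the two assignments are mutually inverse. Finally, both $\bullet^\perp$ and $^\perp\bullet$ are inclusion-reversing essentially by definition of $\Hom$-orthogonality: if $\TT_1\subseteq\TT_2$ then $\TT_2^\perp\subseteq\TT_1^\perp$. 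An inclusion-reversing bijection whose inverse is also inclusion-reversing is exactly an isomorphism $(\tors\AA)^\op\xrightarrow{\sim}\torf\AA$, which also automatically sends $\Hasse((\tors\AA)^\op)$ to $\Hasse(\torf\AA)$, hence matches up Hasse arrows $\TT\to\UU$ in $\tors\AA$ with Hasse arrows $\UU^\perp\to\TT^\perp$ in $\torf\AA$.

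For the second half, let $q\colon\TT\to\UU$ be a Hasse arrow in $\tors\AA$. By Proposition~\ref{proposition:interval}(2) its brick label $S_q$ is the unique brick, up to isomorphism, in $\UU^\perp\cap\TT$, and moreover $\UU^\perp\cap\TT=\Filt S_q$. The corresponding Hasse arrow in $\torf\AA$ is $\UU^\perp\to\TT^\perp$, and by the dual of Proposition~\ref{proposition:interval}(2) its brick label is the unique brick in ${}^\perp(\TT^\perp)\cap\UU^\perp$. So it suffices to show
\[
\UU^\perp\cap\TT={}^\perp(\TT^\perp)\cap\UU^\perp .
\]
But $\TT$ is a torsion class, so $\TT={}^\perp(\TT^\perp)$; substituting this identity into the left-hand side gives the right-hand side immediately. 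Hence the two intervals in question produce the same subcategory of $\AA$, and therefore the same unique brick, so the brick labels agree.

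The only real subtlety, and the step I would be most careful about, is making sure the Hasse quiver is genuinely respected --- i.e.\ that $\TT\to\UU$ is a Hasse arrow in $\tors\AA$ if and only if $\UU^\perp\to\TT^\perp$ is a Hasse arrow in $\torf\AA$. This is where one must invoke that an isomorphism of posets sends covering relations to covering relations: $[\UU,\TT]=\{\UU,\TT\}$ in $\tors\AA$ translates, under the order-reversing bijection, to $[\UU^\perp,\TT^\perp]=\{\UU^\perp,\TT^\perp\}$ in $\torf\AA$, using that $\VV\mapsto\VV^\perp$ restricts to a bijection between the interval $[\UU,\TT]$ in $\tors\AA$ and the interval $[\UU^\perp,\TT^\perp]$ in $\torf\AA$ (with order reversed). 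Once that bookkeeping is in place, everything else is a direct unwinding of the torsion-pair axioms and Proposition~\ref{proposition:interval}. No delicate estimate or construction is needed; the proof is essentially a formal manipulation of perpendicular subcategories.
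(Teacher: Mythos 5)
Your proof is correct and follows essentially the same route as the paper: the key step is exactly the identity $\UU^\perp\cap\TT = {}^\perp(\TT^\perp)\cap\UU^\perp$ obtained from $\TT={}^\perp(\TT^\perp)$, after which the dual of the brick-labeling definition gives the result. The paper dispatches the first half (the poset anti-isomorphism) as ``well known and elementary''; you spell it out, and you also make explicit the preservation of covering relations, but these are the same standard facts the paper is implicitly invoking.
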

\begin{proof}
The isomorphism $(\tors \AA)^\op\cong \torf \AA$ is well known and elementary. We show the invariance of the brick labeling. Let $\TT\xto{S_q}\UU$ be a labeled arrow in $\Hasse(\tors \AA)$. Note that ${^\perp(\TT^\perp)}=\TT$ since $\TT$ is a torsion class in $\AA$. This basic observation implies
\begin{align*}
{^\perp(\TT^\perp)}\cap\UU^\perp=\UU^\perp\cap\TT=\Filt S_q;
\end{align*}
hence, according to the dual of Definition \ref{definition:bricklabeling}, $\UU^\perp\to\TT^\perp$ has the label $S_q$ in $\Hasse(\torf \AA)$.
\end{proof}

Brick labeling between torsion classes was first considered for the preprojective algebras $\Pi$ of Dynkin type $\Delta$ in \cite{IRRT}. The crucial ingredient is the bijection between the Coxeter group $W$ associated to $\Delta$ and $\tors \Pi$ established by Mizuno \cite{M}. The first named author of this paper introduced brick labeling in \cite{A} for functorially finite torsion classes in the module category $\mod A$ 
in the context of the Koenig--Yang correspondences \cite{KY,BY} and 
$\tau$-tilting theory \cite{AIR}.

One can easily check that the labeled Hasse quiver of $\tors \AA$ has the following global structure:

\begin{proposition}\label{proposition:endpoints}
For an abelian length category $\AA$, the following properties hold:
\begin{itemize}
\item The arrows in $\Hasse(\tors \AA)$ ending in $0$ are 
$\Filt S \xto{S}0$ where $S$ runs through $\simple \AA$.
\item The arrows in $\Hasse(\tors \AA)$ starting at $\AA$ are 
$\AA\xto{S}{^\perp S}$ where $S$ runs through $\simple \AA$.
\item For $S,S'\in\simple \AA$, there is an inclusion $\Filt S \subseteq{^\perp S'}$ if and only if $S\not\cong S'$.
\end{itemize} 
\end{proposition}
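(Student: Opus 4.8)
The plan is to derive all three bullets from the general brick labeling machinery of Proposition \ref{proposition:interval} together with the elementary description of the smallest torsion class $\T(\X) = \Filt(\Fac \X)$. For the first bullet, I would start from the observation that an arrow $\TT \to 0$ in $\Hasse(\tors\AA)$ corresponds to a torsion class $\TT$ such that $[0,\TT]=\{0,\TT\}$, i.e. $0$ is covered by $\TT$. By Proposition \ref{proposition:interval}(2) applied to the interval $[0,\TT]$, such an arrow exists precisely when $0^\perp \cap \TT = \TT$ contains a unique brick $S$ up to isomorphism, and then $\TT = \Filt S$ and $0 = \TT \cap {}^\perp S$. So it remains to identify which bricks $S$ arise this way: I claim $\Filt S$ is covered by $0$ in $\tors\AA$ exactly when $S$ is simple. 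If $S$ is simple, then any nonzero object of $\Filt S$ has $S$ as a subobject and as a quotient, so the only torsion classes contained in $\Filt S$ are $0$ and $\Filt S$ itself — hence $\Filt S \to 0$ is a Hasse arrow with label $S$. Conversely, if $\Filt S \to 0$ is a Hasse arrow, then $S$ is a brick by Proposition \ref{proposition:interval}(2), and if $S$ were not simple it would have a proper nonzero simple quotient $S'$; then $\T(S') = \Filt(\Fac S')$ would be a torsion class with $0 \subsetneq \T(S') \subseteq \Filt S$, and $\T(S') \ne \Filt S$ since $\Filt S$ does not contain $S'$ as it is brick-labeled only by $S$ — contradicting that $0$ is covered by $\Filt S$. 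Thus the bricks labeling arrows into $0$ are exactly the simples, giving the first bullet.

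The second bullet is the dual statement: applying Proposition \ref{proposition:duality}, arrows $\AA \to \UU$ in $\Hasse(\tors\AA)$ correspond under ${}^\perp\bullet \colon (\tors\AA)^\op \to \torf\AA$ to arrows $\AA^\perp = 0 \to \UU^\perp$, i.e. to arrows out of $0$ in $\Hasse(\torf\AA)$, with the same brick label. By the dual of the first bullet (torsion-free classes of the form $\Sub S = \Filt S$ for $S$ simple are exactly those covering $0$ in $\torf\AA$), these labels are precisely the simple objects $S$, and the corresponding torsion class is $\UU = {}^\perp(\Filt S) = {}^\perp S$ (using that $\Filt S$ is generated under extensions by $S$, so ${}^\perp(\Filt S) = {}^\perp S$). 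Hence the arrows starting at $\AA$ are exactly $\AA \xrightarrow{S} {}^\perp S$ for $S \in \simple\AA$. Alternatively one can argue directly, dualizing the argument of the first bullet inside $\tors\AA$ itself: $\AA$ covers $\UU$ iff $\UU^\perp \cap \AA = \UU^\perp$ is $\Filt S$ for a unique simple-candidate brick $S$, and one checks $\UU^\perp = \Filt S$ with $S$ a brick forces $\UU = {}^\perp S$ and then that $\AA$ covers ${}^\perp S$ iff $S$ is simple.

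For the third bullet, the inclusion $\Filt S \subseteq {}^\perp S'$ with $S, S'$ simple is equivalent to $\Hom_\AA(X, S') = 0$ for all $X \in \Filt S$. Since $\Filt S$ is generated under extensions by copies of $S$ and $\Hom(-, S')$ is left exact, this holds if and only if $\Hom_\AA(S, S') = 0$. As $S$ and $S'$ are simple, $\Hom_\AA(S,S') \ne 0$ forces $S \cong S'$ (any nonzero map between simples is an isomorphism), and conversely if $S \cong S'$ then $\Hom_\AA(S,S') \ne 0$. Hence $\Filt S \subseteq {}^\perp S'$ iff $S \not\cong S'$.

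I expect the only real subtlety to be the converse direction of the first bullet — showing that a Hasse arrow into $0$ must be labeled by a \emph{simple} brick rather than merely a brick. The key point there is that a brick $S$ which is not simple admits a proper simple quotient $S'$, and $\T(S')$ is then a torsion class strictly between $0$ and $\Filt S$; one must be slightly careful that $\T(S') \subsetneq \Filt S$, which follows because $\Filt S = \Filt\{S\}$ cannot contain the non-isomorphic simple $S'$ (a nonzero object of $\Filt S$ has simple composition factors all isomorphic to $S$). Everything else is a routine application of the already-established brick labeling results and the formula $\T(\X) = \Filt(\Fac\X)$.
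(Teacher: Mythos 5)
The paper leaves this proposition to the reader (``One can easily check\dots''), so there is no official proof to compare against; on its own merits, your argument has a genuine flaw in the converse direction of the first bullet, which you yourself flag as ``the only real subtlety.''

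There you simultaneously assert $\T(S')\subseteq\Filt S$ and $S'\notin\Filt S$; these are incompatible, since $S'\in\T(S')$. The second claim is justified by ``a nonzero object of $\Filt S$ has simple composition factors all isomorphic to $S$,'' but that is only true when $S$ is \emph{simple} -- which is exactly what you are trying to prove. For a brick $S$ of length $\ge 2$, the composition factors of $\Filt S$ are the composition factors of $S$, which include $S'$. So the inclusion $\T(S')\subseteq\Filt S$ is not a problem at all (once one observes $\Filt S=\TT$ is a torsion class closed under factor objects, so $S'\in\Fac S\subseteq\TT=\Filt S$), but the claimed strictness $\T(S')\ne\Filt S$ is precisely what lacks justification. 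The clean way to close the gap uses the material you already cite: since $S'\in\Filt S=\TT$, $S'$ is a brick in $\TT=0^\perp\cap\TT$, and Proposition~\ref{proposition:interval}(2) says $\TT$ contains a \emph{unique} brick up to isomorphism, namely $S$; hence $S'\cong S$, contradicting that $S'$ is a proper quotient of $S$. Equivalently: $S'$ is simple and lies in $\Filt S$, so a filtration of $S'$ with subquotients in $\add S$ forces $S'\in\add S$, hence $S'\cong S$ by indecomposability of the brick $S$, contradiction.

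Two smaller points worth tightening: in the forward direction of the first bullet you should record that $\Filt S$ is actually a torsion class when $S$ is simple (closure under factor objects follows because factor objects inherit composition factors); and in the second bullet, $\Sub S$ is $\add S$, not $\Filt S$, for simple $S$ -- what you want is that the smallest torsion-free class containing $S$ is $\F(S)=\Filt(\Sub S)=\Filt S$, and the Hasse arrow in $\torf\AA$ runs $\UU^\perp\to 0$, not $0\to\UU^\perp$. The third bullet is correct as written.
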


Proposition \ref{proposition:incidentsemibrick} below shows that the set of labels of arrows in $\Hasse(\tors \AA)$ starting at (resp.~ ending in) a fixed torsion class actually form a semibrick. In order to simplify our statements, we introduce the following notation.

\begin{notation}\label{notation:labels}
For a subset $S\subseteq\tors \AA$, let $\Label S$ denote the set of labels of arrows in the full subquiver of $\Hasse(\tors \AA)$ with vertices in $S$.
\end{notation}

\begin{proposition}\label{proposition:incidentsemibrick} 
Let $[\UU,\TT]$ be an interval in $\tors \AA$. The sets $\Label{[\UU,\TT]^+}$ and $\Label{[\UU,\TT]^-}$ are semibricks.
\end{proposition}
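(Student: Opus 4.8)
The plan is to treat $\Label{[\UU,\TT]^+}$ (the set of labels of arrows of $\Hasse(\tors \AA)$ starting at $\TT$ and landing inside $[\UU,\TT]$); the claim for $\Label{[\UU,\TT]^-}$ follows by the duality of Proposition \ref{proposition:duality}, which exchanges $\tors \AA$ with $\torf \AA$, sends $[\UU,\TT]$ to the interval $[\TT^\perp,\UU^\perp]$, interchanges the two endpoints, and preserves brick labels; so an argument for ``labels starting at the top'' applies verbatim to ``labels ending at the bottom''. It also suffices to treat the case $\UU = 0$, i.e.\ to show that $\Label{[0,\TT]^+}$ is a semibrick: indeed, by Proposition \ref{proposition:interval}(1) the subcategory $\TT$ is itself an abelian length category in its own right (being $\Filt$ of its bricks, it is wide, hence abelian), the poset $[0,\TT]$ in $\tors \AA$ coincides with $\tors \TT$ with matching brick labels, and an arrow $\TT\to\VV$ in $\Hasse(\tors \AA)$ lies in $[\UU,\TT]$ exactly when its label, a brick of $\TT$, already lies in $\UU^\perp$ — but restricting to a Serre-type subset of the labels only shrinks the set, and a subset of a semibrick is a semibrick. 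So the heart of the matter is: \emph{the set of labels of arrows in $\Hasse(\tors \AA)$ starting at $\TT$ is a semibrick.}

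To prove this, fix two distinct arrows $\TT\xto{S}\VV_1$ and $\TT\xto{S'}\VV_2$ in $\Hasse(\tors \AA)$, so $S\not\cong S'$; I must show $\Hom_\AA(S,S')=0$ and $\Hom_\AA(S',S)=0$. By Proposition \ref{proposition:interval}(2), $S\in\brick(\VV_1^\perp\cap\TT)$ with $\VV_1^\perp\cap\TT=\Filt S$ and $\VV_1 = \TT\cap{^\perp S}$; similarly for $S'$. Consider the torsion class $\VV_1\meet\VV_2 = \VV_1\cap\VV_2$ inside $[0,\TT]$. The key structural input is to look at the interval $[\VV_1\cap\VV_2,\TT]$ and its associated subcategory $\WW := (\VV_1\cap\VV_2)^\perp\cap\TT$, which by Proposition \ref{proposition:interval}(1) equals $\Filt(\brick\WW)$. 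Both $S$ and $S'$ lie in $\WW$: since $\VV_i\cap\VV_j\subseteq\VV_i$ we have $(\VV_i)^\perp\subseteq(\VV_1\cap\VV_2)^\perp$, so $S\in\VV_1^\perp\cap\TT\subseteq\WW$ and likewise $S'\in\WW$. Now I use that $\TT\to\VV_i$ being a Hasse arrow forces $S,S'$ to be \emph{among the simple objects of $\WW$}: more precisely, in the abelian length category $\WW$, the torsion class $\VV_i\cap\WW$ (image of $\VV_i$ under $\Phi$ of Lemma/Theorem-type reasoning, equivalently $\VV_i^\perp\cap\WW$ computed appropriately) is a coatom, and the brick label of a Hasse arrow out of the top $\WW$ of $\tors\WW$ is, by Proposition \ref{proposition:endpoints} applied to $\WW$, a \emph{simple object of $\WW$}. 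Two non-isomorphic simple objects of an abelian category are $\Hom$-orthogonal, giving $\Hom_\AA(S,S')=\Hom_\WW(S,S')=0$ and symmetrically.

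The step I expect to be the main obstacle, and where the argument needs the most care, is the claim that $S$ and $S'$ are simple objects of $\WW=(\VV_1\cap\VV_2)^\perp\cap\TT$ — equivalently, that the arrows $\TT\to\VV_1$ and $\TT\to\VV_2$ remain Hasse arrows ``from the top'' after passing to this interval, and that $\VV_1\cap\VV_2$ is genuinely the meet inside $[0,\TT]$ so that $\TT$ is the top of $\tors\WW$ with the right two coatoms below it. This requires knowing that $\WW$ is abelian (so ``simple object'' makes sense and Ringel's semibrick/wide correspondence plus Proposition \ref{proposition:endpoints} apply): this is \emph{not} automatic for an arbitrary interval, so one must instead argue directly that the full subquiver of $\Hasse(\tors\AA)$ consisting of arrows out of $\TT$ is controlled purely by the bricks in $\VV^\perp\cap\TT$ for the relevant $\VV$, and that $\TT\xto{S}\VV_1$, $\TT\xto{S'}\VV_2$ together with $S\not\cong S'$ already forces, via Proposition \ref{proposition:interval}(2) applied twice, that $\Filt S$ and $\Filt S'$ sit inside $\WW$ as distinct simple (i.e.\ having no proper nonzero subobject in $\WW$) objects. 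Concretely: a nonzero proper subobject of $S$ in $\WW$ would, being in $\TT$ and in $(\VV_1\cap\VV_2)^\perp$, produce a torsion class strictly between $\VV_1\cap\VV_2$ and $\TT$ that is not comparable to $\VV_1$, contradicting maximality; pinning down this contradiction cleanly — using $\VV_i=\TT\cap{^\perp S_i}$ and the fact that $\T(\VV_1\cap\VV_2, S)=\TT$ — is the technical crux, after which $\Hom$-orthogonality of non-isomorphic simples of the abelian category $\WW$ finishes the proof.
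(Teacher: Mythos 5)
Your setup is fine: reducing the $-$ case to the $+$ case via Proposition \ref{proposition:duality}, and reducing to $\UU=0$ because every arrow counted by $\Label{[\UU,\TT]^+}$ starts at $\TT$ and a subset of a semibrick is a semibrick, are both correct moves (and are compatible with what the paper implicitly does, since its argument never uses $\UU$). One aside in this reduction is wrong, though: $\TT=\Filt(\brick\TT)$ does \emph{not} make $\TT$ a wide (or abelian) subcategory --- $\Filt$ of a set of bricks is wide only when those bricks form a semibrick (Ringel), and a torsion class is generally not closed under kernels. Fortunately your reduction does not actually need this claim.

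The real gap is in the core step, and you correctly flag it as the ``technical crux'' but the route you sketch does not close it. You set $\WW:=(\VV_1\cap\VV_2)^\perp\cap\TT$, want to argue that $S$ and $S'$ are simple objects of $\WW$, and then invoke ``non-isomorphic simples in an abelian category are Hom-orthogonal.'' There are two independent problems. First, the proposed justification that a proper nonzero subobject of $S$ in $\WW$ would ``contradict maximality'' of $\VV_1$ is not valid: $\VV_1$ being a coatom under $\TT$ says nothing prevents torsion classes strictly between $\VV_1\cap\VV_2$ and $\TT$ that are incomparable to $\VV_1$, so no contradiction arises. Second, and more fundamentally, even if one did know that $S$ and $S'$ admit no proper nonzero subobjects lying in $\WW$, the Hom-orthogonality conclusion requires $\WW$ to be closed under kernels and images, which is not known (indeed, that $\WW$ is wide is exactly the kind of statement one is \emph{not} entitled to at this point). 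Concretely, for a nonzero $f\colon S\to S'$, one can check $\Image f\in\WW$ (it is a factor of $S\in\TT$ and a subobject of $S'\in(\VV_1\cap\VV_2)^\perp$), so $f$ would be epic; but $\Kernel f$ is a subobject of $S\in\TT$ and torsion classes are not closed under subobjects, so $\Kernel f$ need not lie in $\WW$ and you cannot conclude $f$ is monic. The argument as written therefore does not yield $\Hom(S,S')=0$.

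The paper's proof avoids $\WW$ entirely and is considerably more elementary. Given $f\colon S_1\to S_2$ (with $\TT_i\to\TT$ labeled $S_i$, say, the dual of your setup), one takes the canonical sequence $0\to X'\to\Cokernel f\to X''\to 0$ of $\Cokernel f$ with respect to $(\TT,\TT^\perp)$. Since $X''$ is a factor of $S_2$, it lies in $\TT^\perp\cap\TT_2=\Filt S_2$, forcing $X''=0$ or $X''\cong S_2$. If $X''=0$ one shows $S_2\in\TT^\perp\cap\TT_1=\Filt S_1$, contradicting $S_1\not\cong S_2$; if $X''\cong S_2$ one gets $\Cokernel f\cong S_2$, hence $\Image f=0$ and $f=0$. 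I'd suggest reworking your proof along these lines rather than through $\WW$.
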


\begin{proof}
We prove only the first statement; the second one can be shown similarly.

Since all labels are bricks by Definition \ref{definition:bricklabeling}, it is enough to show $\Hom$-orthogonality.
Let $\TT_i\xto{S_i}\TT$ be arrows in $\Hasse(\tors \AA)$ for $i\in\{1,2\}$
with $\TT_1 \ne \TT_2$.
We first remark that $S_1 \not \cong S_2$ follows from Proposition \ref{proposition:interval}.

Let $f\in\Hom_{\AA}(S_1,S_2)$.
There is a short exact sequence
\begin{align*}
0 \to X' \to \Cokernel f \to X'' \to 0
\end{align*}
with $X' \in \TT$ and $X'' \in \TT^\perp$.
Since $X''$ is a factor object of $S_2$, we deduce that $X'' \in \TT^\perp \cap \TT_2$.
Then Proposition \ref{proposition:interval} implies $X'' \in \Filt S_2$.
Thus $X''$ must be $0$ or isomorphic to $S_2$.

If $X''=0$, then $\Cokernel f \in \TT$.
Thus in the short exact sequence 
\begin{align*}
0 \to \Image f \to S_2 \to \Cokernel f \to 0,
\end{align*}
$\Image f \in \T(S_1) \subseteq \TT_1$ and $\Cokernel f \in \TT \subseteq \TT_1$ hold, so $S_2 \in \TT_1$.
Clearly, $S_2 \in \TT^\perp$, so $S_2 \in \TT^\perp \cap \TT_1$.
Then Proposition \ref{proposition:interval} gives $S_2 \cong S_1$,
but this is a contradiction.
Thus $X'' \cong S_2$, and then $\Cokernel f = S_2$.
This implies $f=0$ as desired.
\end{proof}

\section{Wide intervals and reduction of torsion classes}\label{section:reduction}

In this section we investigate wide intervals $[\UU,\TT]$ in $\tors \AA$.

\begin{definition}\label{definition:wideintervals}
An interval $[\UU,\TT]$ in $\tors \AA$ is a \textit{wide interval} 
if $\UU^\perp\cap\TT$ is a wide subcategory of $\AA$.
\end{definition}

For example, 
we can construct a wide interval $[\UU(N,Q),\TT(N,Q)] \subseteq \tors A$
from a $\tau$-rigid pair $(N,Q)$ in $\mod A$
(that is, $N \in \mod A$ and $Q \in \proj A$ satisfying
$\Hom_A(N,\tau N)=0$ and $\Hom_A(Q,N)=0$), where 
\[\UU(N,Q):=\Fac N \text{ and } \TT(N,Q):={^\perp (\tau N)} \cap Q^\perp\]
as in \cite{J,DIRRT}.
In this case, $[\UU(N,Q),\TT(N,Q)]$ is isomorphic to $\tors C_{N,Q}$
for a certain finite-dimensional $K$-algebra $C_{N,Q}$
\cite[Theorem 4.12]{DIRRT} (see also \cite[Theorems 3.8, 3.12]{J}) as a complete lattice. This bijection is compatible with the brick labeling of 
$[\UU(N,Q),\TT(N,Q)] \subseteq \tors A$ and $\tors C_{N,Q}$ 
\cite[Proposition 4.13]{DIRRT}.
We extend these results for all wide intervals as follows.

\begin{theorem}\label{theorem:reduction}
Let $[\UU,\TT]$ be a wide interval in $\tors \AA$, and set $\WW:=\UU^\perp\cap\TT$. 
\begin{itemize}
\item[(1)]
There are mutually inverse isomorphisms of complete lattices
\begin{align*}\xymatrix@R0em@C4em{
[\UU,\TT]\ar@<0.5ex>[r]^{\Phi}&\ar@<0.5ex>[l]^{\Psi}\tors \WW
}\end{align*}
given by $\Phi(\VV):=\UU^\perp\cap\VV$ and $\Psi(\X):=\T(\UU,\X)$ for $\VV\in[\UU,\TT]$ and $\X\in\tors \WW$ respectively. 
Moreover, $\Psi(\X)=\UU*\X$ holds for every $\X\in\tors \WW$.
\item[(2)]
The isomorphism $\Phi$ preserves the brick labeling:
Namely, the brick label of $\VV_1 \to \VV_2$ in $[\UU,\TT]$ is the same as the brick label of $\Phi(\VV_1) \to \Phi(\VV_2)$ in $\tors \WW$.
\item[(3)]
We have $\simple \WW=\Label{[\UU,\TT]^+}=\Label{[\UU,\TT]^-}$.
\end{itemize}
\end{theorem}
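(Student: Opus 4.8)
The plan is to prove parts (1), (2), (3) in order, using Lemma \ref{lemma:intervalcategory} and Proposition \ref{proposition:interval} as the main tools, and leveraging the hypothesis that $\WW = \UU^\perp \cap \TT$ is wide (hence abelian) throughout.

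For part (1), I would first check that $\Phi$ and $\Psi$ land in the claimed sets. For $\Phi$: given $\VV \in [\UU,\TT]$, the subcategory $\UU^\perp \cap \VV$ is a torsion class inside the abelian category $\WW$ — it is closed under extensions since both $\UU^\perp$ and $\VV$ are, and closed under $\WW$-quotients since $\VV$ is closed under $\AA$-quotients and $\UU^\perp \cap \VV = \WW \cap \VV$. For $\Psi$: given $\X \in \tors \WW$, I claim $\T(\UU,\X) = \UU * \X$. The inclusion $\UU * \X \subseteq \T(\UU,\X)$ is clear since torsion classes are closed under extensions; for the reverse, I would show $\UU * \X$ is already a torsion class in $\AA$, i.e.\ closed under extensions and $\AA$-quotients. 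Closure under quotients is the key point: given $X \in \UU * \X$ with $0 \to U \to X \to W \to 0$, $U \in \UU$, $W \in \X \subseteq \WW$, and a quotient $X \twoheadrightarrow X'$, one forms the induced sequence and uses that $\WW$ is abelian (so the image of $W$ in the relevant quotient stays in $\WW$, in fact in $\X$ since $\X$ is a torsion class in $\WW$) together with closure of $\UU$ under quotients; closure under extensions is a standard horseshoe-type argument. Then $\UU * \X \supseteq \UU, \X$ forces $\T(\UU,\X) \subseteq \UU * \X$. Also $\UU * \X \subseteq \UU * \WW = \TT$ by Lemma \ref{lemma:intervalcategory}, so $\Psi(\X) \in [\UU,\TT]$. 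For the inverse relations: $\Phi\Psi(\X) = \UU^\perp \cap (\UU * \X)$; any object of $\UU*\X$ in $\UU^\perp$ has its $\UU$-part vanish (the canonical sequence for the torsion pair $(\UU,\UU^\perp)$ being unique), so it lies in $\X$, giving $\Phi\Psi(\X) = \X$. And $\Psi\Phi(\VV) = \UU * (\UU^\perp \cap \VV)$; by Lemma \ref{lemma:intervalcategory} applied to the interval $[\UU,\VV]$ — whose associated subcategory is $\UU^\perp \cap \VV$ — every object of $\VV$ lies in $\UU * (\UU^\perp \cap \VV)$, and conversely $\UU * (\UU^\perp \cap \VV) \subseteq \VV * \VV = \VV$, so $\Psi\Phi(\VV) = \VV$. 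Both maps are evidently inclusion-preserving, hence they are mutually inverse isomorphisms of posets, and since both $[\UU,\TT]$ and $\tors\WW$ are complete lattices, isomorphisms of complete lattices.

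For part (2), let $\VV_1 \to \VV_2$ be a Hasse arrow in $[\UU,\TT]$; since $\Phi$ is a lattice isomorphism, $\Phi(\VV_1) \to \Phi(\VV_2)$ is a Hasse arrow in $\tors\WW$. Its label (computed inside $\WW$) is the unique brick in $(\Phi(\VV_2))^{\perp_\WW} \cap \Phi(\VV_1) = (\UU^\perp \cap \VV_2)^{\perp_\WW} \cap \UU^\perp \cap \VV_1$, where $\perp_\WW$ denotes $\Hom$-orthogonal taken within $\WW$. The label in $[\UU,\TT]$ is the unique brick in $\VV_2^\perp \cap \VV_1$. I would show these two subcategories of $\AA$ coincide: an object $W$ of $\WW$ satisfies $\Hom_\AA(V_2', W) = 0$ for all $V_2' \in \UU^\perp \cap \VV_2$ if and only if $\Hom_\AA(V_2, W) = 0$ for all $V_2 \in \VV_2$, because every $V_2 \in \VV_2$ sits in $0 \to U \to V_2 \to W_2 \to 0$ with $U \in \UU$, $W_2 \in \UU^\perp \cap \VV_2$ (Lemma \ref{lemma:intervalcategory}), and $\Hom_\AA(U, W) = 0$ since $W \in \WW \subseteq \UU^\perp$. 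Hence $(\UU^\perp\cap\VV_2)^{\perp}\cap\WW = \VV_2^\perp \cap \WW$, and intersecting with $\VV_1$ (note $\UU^\perp \cap \VV_1 = \WW \cap \VV_1$) gives equality of the two label-defining subcategories; their unique bricks therefore agree.

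For part (3), apply part (2) together with Proposition \ref{proposition:endpoints} to the lattice $\tors\WW$: the arrows in $\Hasse(\tors\WW)$ starting at the maximal element $\WW$ are exactly $\WW \xrightarrow{S} {^{\perp_\WW}}S$ for $S \in \simple\WW$, and those ending at the minimal element $0$ are $\Filt_\WW S \xrightarrow{S} 0$ for $S \in \simple\WW$; in both cases the label set is precisely $\simple\WW$. Since $\Phi$ sends $\TT \mapsto \WW$ and $\UU \mapsto 0$ and preserves the Hasse quiver and (by (2)) the labels, the arrows in $[\UU,\TT]$ starting at $\TT$ have label set $\simple\WW$, and the arrows ending at $\UU$ likewise have label set $\simple\WW$; that is, $\Label{[\UU,\TT]^+} = \simple\WW = \Label{[\UU,\TT]^-}$.

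I expect the main obstacle to be the verification in part (1) that $\UU * \X$ is a torsion class in $\AA$ — specifically closure under quotients — since this is where the abelian (wide) structure of $\WW$ must be used carefully: one needs that the "$\WW$-part" of a quotient of an object in $\UU*\X$ remains in $\X$, which relies on $\X$ being a torsion class in the honest abelian category $\WW$ rather than merely a subcategory of $\AA$. Everything else reduces to bookkeeping with canonical sequences of torsion pairs and the already-established Lemma \ref{lemma:intervalcategory} and Proposition \ref{proposition:interval}.
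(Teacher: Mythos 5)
Your overall strategy is sound and your treatment of parts (2) and (3) matches the paper's (the same computation $(\UU^\perp\cap\VV_2)^{\perp}\cap(\UU^\perp\cap\VV_1)=\VV_2^\perp\cap\VV_1$ for (2), and (3) via Proposition \ref{proposition:endpoints}). For part (1), however, you take a genuinely different route from the paper, and there is a gap in the key step.

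You propose to prove $\T(\UU,\X)=\UU*\X$ \emph{first}, by showing $\UU*\X$ is a torsion class in $\AA$, and to deduce the inverse relations afterwards. The paper instead proves $\Phi\circ\Psi=\id$ directly, by induction on the length of $Y\in\UU^\perp\cap\T(\UU,\X)$ inside $\WW$ using the canonical sequence with respect to the torsion pair $(\T(\X),\X^\perp)$, and obtains $\Psi(\X)=\UU*\X$ only at the end as a formal consequence of the established bijection. Your inversion of the logic is perfectly legitimate, and arguably conceptually cleaner, but it shifts the burden onto the claim that $\UU*\X$ is closed under $\AA$-quotients and $\AA$-extensions, which is exactly where your sketch is too loose.

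The gap: given $0\to U\to X\to W\to 0$ with $U\in\UU$, $W\in\X$, and a surjection $X\twoheadrightarrow X'$, the ``induced sequence'' $0\to U'\to X'\to X'/U'\to 0$ obtained by taking the image $U'$ of $U$ in $X'$ does \emph{not} directly help: $X'/U'$ is an $\AA$-quotient of $W$, but it need not lie in $\WW=\UU^\perp\cap\TT$ (it lies in $\TT$, but it may have nonzero $\UU$-torsion part, since the kernel of $W\twoheadrightarrow X'/U'$ is a subobject of $W$ in $\AA$ which need not be a subobject in $\WW$). Since $\X$ is only assumed closed under quotients \emph{in $\WW$}, you cannot conclude $X'/U'\in\X$. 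The fix is to replace the image sequence by the $(\UU,\UU^\perp)$-canonical sequence $0\to t_\UU X'\to X'\to f_{\UU^\perp}X'\to 0$: then $f_{\UU^\perp}X'\in\WW$ by Lemma \ref{lemma:intervalcategory}, and since $U$ maps into $t_\UU X'$, the composite $X\to f_{\UU^\perp}X'$ factors through $W$, so $f_{\UU^\perp}X'$ is an $\AA$-epimorphic image of $W$; as $\WW$ is wide, both kernel and cokernel of $W\to f_{\UU^\perp}X'$ are computed in $\WW$, so this is a $\WW$-quotient of $W\in\X$ and hence lies in $\X$. Closure under extensions needs similar care, because $f_{\UU^\perp}$ is only right exact: from $0\to T_1\to T\to T_2\to 0$ one only gets $f_{\UU^\perp}T_1\to f_{\UU^\perp}T\to f_{\UU^\perp}T_2\to 0$, and one must use wideness of $\WW$ to see that the image of $f_{\UU^\perp}T_1$ in $f_{\UU^\perp}T$ lies in $\WW$ (hence in $\X$), before invoking extension-closure of $\X$ in $\WW$. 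With these repairs your proof of (1) goes through; the underlying use of wideness is the same as in the paper's induction, just packaged differently.
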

\begin{proof} 
(1) 
Both maps are easily seen to be well defined morphisms of partially ordered sets.

For $\VV\in[\UU,\TT]$, the identity $\VV=\UU*(\UU^\perp\cap\VV)$ follows immediately
from Lemma \ref{lemma:intervalcategory}, 
but $\UU*(\UU^\perp\cap\VV)\subseteq\T(\UU,\UU^\perp\cap\VV)$ hence $\VV=\T(\UU,\UU^\perp\cap\VV)$ by minimality.
This proves $\Psi\circ\Phi=\id$.

Let $\X\in\tors \WW$. 
Since $\X \subseteq \WW \subseteq \UU^\perp$, 
the inclusion $\X \subseteq \UU^\perp \cap \T(\UU,\X)$ is obvious. 
For the other one, let $Y \in \UU^\perp \cap \T(\UU,\X)$.
We use induction on the length of $Y$ in $\WW$.
If $Y=0$, then the claim is clear.
Assume $Y \ne 0$; then there is a short exact sequence
\begin{align*}
0\xto{}Y'\xto{}Y\xto{}Y''\xto{}0
\end{align*}
with $0\neq Y'\in \T(\X) \subseteq \AA$ and $Y''\in \X^\perp$.
Then $Y' \in \UU^\perp$ because $Y'$ is a subobject of $Y$,
and $Y' \in \T(\X) \subseteq \TT$ follows from $\X \subseteq \WW \subseteq \TT$.
Thus $Y'\in \UU^\perp \cap \TT \cap \T(\X) = \WW \cap \T(\X)=\X$, 
since $\X \in \tors \WW$.
Moreover $Y'' \cong \Cokernel (Y' \to Y) \in \WW$ as $\WW$ is assumed to be wide,
and we have $Y'' \in \UU^\perp \cap \T(\UU,\X)$ since
$\WW \subseteq \UU^\perp$ and $Y''$ is a factor object of $Y \in \T(\UU,\X)$. 
Now we may apply induction on the length to $Y''$.
We obtain $Y''\in\X$ and henceforth $Y\in\X$. This proves $\Phi\circ\Psi=\id$.

Altogether $\Phi$ and $\Psi$ are mutually inverse isomorphisms of partially ordered sets thus also isomorphisms of complete lattices.

For the last statement, let $\X \in \tors \WW$,
then we get 
\[\Psi(\X)=\UU*(\UU^\perp\cap\Psi(\X))=\UU*\Phi(\Psi(\X))=\UU*\X.\]

(2) For every interval $[\VV_1,\VV_2]$ in $[\UU,\TT]$ the equality
\begin{align*}
(\UU^\perp\cap\VV_1)^\perp\cap(\UU^\perp\cap\VV_2)=\VV_1^\perp\cap\VV_2\subseteq\WW
\end{align*}
holds, hence $\Phi$ preserves the brick labeling by Proposition \ref{proposition:interval},
and so does $\Psi$.

(3) In view of Proposition \ref{proposition:endpoints}, this follows from parts (1) and (2).
\end{proof}

Let us give an example of a wide interval which does not come from $\tau$-rigid pairs.

\begin{example}\label{example:Kronecker}
Assume that $K$ is an algebraically closed field and that $A$ is the path algebra
$K(1 \rightrightarrows 2)$ of the Kronecker quiver $1 \rightrightarrows 2$.

We set $\TT$ as the smallest torsion class containing all regular and preinjective modules,
and $\UU$ as the smallest torsion class containing all preinjective modules.

In this case, $\WW$ is a wide subcategory of $\mod A$, namely the subcategory of all regular $A$-modules. Thus $[\UU,\TT]$ is a wide interval. The simple objects in $\WW$ are all quasi-simple regular modules $S_\lambda$ parametrized by 
$\lambda \in \mathbb{P}^1(K)$.
One can check that $\WW \simeq \bigoplus_{\lambda \in \mathbb{P}^1(K)} \Filt S_\lambda$
as an abelian category, so $\tors \WW$ can be identified with 
$\prod_{\lambda \in \mathbb{P}^1(K)} \tors(\Filt S_\lambda)$.
Since the torsion classes in $\Filt S_\lambda$ are $\Filt S_\lambda$ and $\{0\}$,
$\tors \WW$ is in bijection with the power set $2^{\mathbb{P}^1(K)}$.

Therefore Theorem \ref{theorem:reduction} gives an isomorphism of complete lattices
\begin{align*}
2^{\mathbb{P}^1(K)} \to [\UU,\TT]; \qquad \Lambda \mapsto \UU*\Filt \{S_\lambda \mid \lambda \in \Lambda\}.
\end{align*}
Every arrow ending in the smallest element $\{0\} \in \tors \WW$ is of the form $\Filt S_\lambda \to \{0\}$ for some $\lambda \in \mathbb{P}^1(K)$,
and it is labeled by the brick $S_\lambda$.
The corresponding arrow in $[\UU,\TT]$ is $\T(\UU,S_\lambda) \to \UU$,
and its brick label is also $S_\lambda$.

We remark that this example can be obtained also from numerical torsion classes
\begin{align*}
\TT_\theta &:= \{ M \in \mod A \mid 
\text{for any nonzero quotient $N$, $\theta(N) > 0$}\}, \\
\overline{\TT}_\theta &:= \{ M \in \mod A \mid 
\text{for any quotient $N$, $\theta(N) \ge 0$}\}.
\end{align*}
associated to each $\theta \in K_0(\proj A) \otimes_\integers \reals$ in \cite{BKT,B2}.
For any $\theta$, the intersection
$\TT_\theta^\perp \cap \overline{\TT}_\theta$ is the $\theta$-semistable subcategory
\begin{align*}
\WW_\theta:=\{ M \in \mod A \mid 
\text{$\theta(M)=0$, and for any nonzero quotient $N$, $\theta(N) \ge 0$}\}
\end{align*}
introduced by King \cite{K}, which is a wide subcategory.
By setting $\theta:=[P_1]-[P_2]$ with $P_i$ the indecomposable projective module,
we get $\TT_\theta=\UU$ and $\overline{\TT}_\theta=\TT$ above.
Thus, $[\UU,\TT]$ is a wide interval, and the simple objects of $\WW_\theta$ are
$S_\lambda$.

Similar arguments hold for tame hereditary algebras.
\end{example}

\section{Classification of wide intervals in terms of join and meet intervals}\label{section:joinmeetintervals}

Motivated by the results of Section \ref{section:reduction}, we next aim for characterizing
wide intervals in terms of arrows in the Hasse quiver of torsion classes.
For this purpose, we define the following notions for intervals.

\begin{definition}\label{definition:joinintervals}
Let $[\UU,\TT]$ be an interval in $\tors \AA$.
\begin{itemize}
\item[(1)]
The interval $[\UU,\TT]$ is called a \textit{join interval} if $\TT=\Join [\UU,\TT]^-$.
\item[(2)]
The interval $[\UU,\TT]$ is called a \textit{meet interval} if $\UU=\Meet [\UU,\TT]^+$.
\end{itemize}
\end{definition}

We remark that $[\TT,\TT]$ is a join interval and a meet interval
because $[\TT,\TT]^-=[\TT,\TT]^+=\{\TT\}$.
Actually, these notions coincide with wide intervals.

\begin{theorem}\label{theorem:joinintervals}
Let $[\UU,\TT]$ be an interval in $\tors \AA$. 
Then the following conditions are equivalent:
\begin{itemize}
\item[(a)] The interval $[\UU,\TT]$ is a wide interval.
\item[(b)] The interval $[\UU,\TT]$ is a join interval.
\item[(c)] The interval $[\UU,\TT]$ is a meet interval.
\end{itemize}
\end{theorem}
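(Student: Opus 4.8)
The plan is to prove the cycle of implications $(a) \Rightarrow (b) \Rightarrow (c) \Rightarrow (a)$, exploiting the duality between $\tors \AA$ and $\torf \AA$ from Proposition \ref{proposition:duality} to reduce work. Indeed, under the isomorphism $(\tors \AA)^\op \cong \torf \AA$, a wide interval $[\UU,\TT]$ corresponds to the interval $[\TT^\perp, \UU^\perp]$ in $\torf \AA$, whose ``difference category'' ${^\perp(\UU^\perp)} \cap \TT^\perp$ is again wide (it equals $\UU^\perp \cap \TT$ viewed inside $\AA$, essentially). Moreover the isomorphism sends $[\UU,\TT]^-$ to $[\TT^\perp, \UU^\perp]^+$ and joins to meets. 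So ``wide = join'' in $\tors \AA$ is equivalent to ``wide = meet'' in $\torf \AA$, and since $\torf \AA = \tors(\AA^\op)$, the equivalence $(a) \Leftrightarrow (c)$ follows formally from $(a) \Leftrightarrow (b)$ applied to $\AA^\op$. Thus it suffices to prove $(a) \Leftrightarrow (b)$, and even then I would split into $(a)\Rightarrow (b)$ and $(b) \Rightarrow (a)$.

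For $(a) \Rightarrow (b)$: assume $\WW := \UU^\perp \cap \TT$ is wide, and use Theorem \ref{theorem:reduction}. By part (3) of that theorem, $\simple \WW = \Label{[\UU,\TT]^-}$, i.e. the labels of arrows $\VV \to \UU$ in $[\UU,\TT]$ are exactly the simple objects of $\WW$. Under the lattice isomorphism $\Phi \colon [\UU,\TT] \to \tors \WW$ these arrows correspond to the arrows of $\Hasse(\tors \WW)$ ending at the minimal element $0$, which by Proposition \ref{proposition:endpoints} (applied to the abelian length category $\WW$) are precisely $\Filt_{\WW} S \to 0$ for $S \in \simple \WW$. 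Now I claim $\Join [\UU,\TT]^- = \TT$: translating via $\Phi$, this is the claim that $\Join \{ \Filt_{\WW} S \mid S \in \simple \WW \} = \WW$ in $\tors \WW$. But the join of all the $\Filt_{\WW} S$ is the smallest torsion class containing every simple object of $\WW$, which is $\Filt_{\WW}(\simple \WW) = \WW$ since $\WW$ is a length category. Hence $[\UU,\TT]$ is a join interval. (One should also handle the degenerate case $\UU = \TT$, where $\WW = 0$ and everything is trivial.)

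For $(b) \Rightarrow (a)$: assume $\TT = \Join [\UU,\TT]^-$; I must show $\WW = \UU^\perp \cap \TT$ is closed under kernels, images, and cokernels (closure under extensions being automatic as noted in the introduction). The key structural input is Proposition \ref{proposition:interval}: for each Hasse arrow $\VV \to \UU$ in $[\UU,\TT]$ there is a brick $S_\VV \in \brick(\WW)$ with $\Filt S_\VV = \UU^\perp \cap \VV$ and $\VV = \T(\UU, S_\VV)$. Let $\S := \Label{[\UU,\TT]^-}$; by Proposition \ref{proposition:incidentsemibrick} this is a semibrick, so $\Filt \S$ is a wide subcategory by Ringel's theorem. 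The plan is to show $\WW = \Filt \S$. One inclusion: each $S_\VV \in \WW$ and $\WW$ is closed under extensions, so $\Filt \S \subseteq \WW$. For the reverse inclusion, I would use the join hypothesis: $\TT = \Join_{\VV} \VV = \T\bigl(\bigcup_\VV \VV\bigr) = \T(\UU, \S)$, the last step because $\VV = \T(\UU, S_\VV)$ and $\T$ is monotone/idempotent, so the union of the $\VV$ generates the same torsion class as $\UU$ together with $\S$. Then by Lemma \ref{lemma:intervalcategory}, $\TT = \UU * \WW$; combined with $\TT = \T(\UU,\S) = \Filt \Fac(\UU \cup \S)$ and a filtration-refinement argument (pushing the $\UU$-part of any filtration to the front, using that $\UU$ is a torsion class and $\S \subseteq \UU^\perp$), one extracts that every object of $\WW = \UU^\perp \cap \TT$ lies in $\Filt \S$. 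Hence $\WW = \Filt \S$ is wide.

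The main obstacle I anticipate is the last filtration-refinement step in $(b) \Rightarrow (a)$: going from ``$\TT$ is generated as a torsion class by $\UU$ and the semibrick $\S$'' to ``the $\UU$-free part $\WW$ of $\TT$ is exactly $\Filt \S$'' requires care, because a priori $\T(\UU,\S) = \Filt\Fac(\UU \cup \S)$ mixes $\UU$-objects and $\S$-objects in factor objects and extensions in an uncontrolled order. The resolution should be: given $W \in \WW$, write it in $\Filt\Fac(\UU \cup \S)$, observe that a factor object of an object of $\UU$ lies in $\UU$ and a factor object of an object of $\add \S$ is either $0$ or back in $\add \S$ (as the $S_\VV$ are bricks and pairwise $\Hom$-orthogonal, any quotient of a sum of them that is not all of it... — actually one must be slightly more careful and perhaps argue via the canonical sequence for the torsion pair $(\UU, \UU^\perp)$ applied termwise to a filtration), so that any $(\UU \cup \S)$-filtration of $W$ can be rearranged with all $\UU$-steps below all $\S$-steps; since $W \in \UU^\perp$ the $\UU$-part is forced to vanish, leaving $W \in \Filt \S$. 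Alternatively, and perhaps more cleanly, one can avoid this by instead showing directly that $\UU^\perp \cap \TT \subseteq \Filt\S$ using that $\UU^\perp$ is a torsion-free class (closed under subobjects and extensions) together with $\TT = \T(\UU,\S)$: an object in $\UU^\perp \cap \T(\UU,\S)$ has a $\Fac(\UU\cup\S)$-filtration, and intersecting with $\UU^\perp$ kills all $\Fac\UU$-layers, which is the route I would try first.
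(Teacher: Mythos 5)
Your overall strategy matches the paper's: prove $(a)\Rightarrow(b)$ via Theorem~\ref{theorem:reduction} together with Proposition~\ref{proposition:endpoints}, obtain the $(c)$-side by the $(\tors\AA)^\op\cong\torf\AA$ duality of Proposition~\ref{proposition:duality}, and for $(b)\Rightarrow(a)$ aim to show $\WW=\Filt\LL$ for $\LL:=\Label{[\UU,\TT]^-}$ and invoke Proposition~\ref{proposition:incidentsemibrick} plus Ringel's theorem. Up to that point the argument is sound and identical in outline to Sections~\ref{section:reduction}--\ref{section:joinmeetintervals}.

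The gap is precisely where you anticipate it: establishing $\WW\subseteq\Filt\LL$. Your first sketch hinges on the claim that ``a factor object of an object of $\add\S$ is either $0$ or back in $\add\S$,'' which is false in general --- a brick $S$ can have proper nonzero quotients that are neither zero nor in $\add\S$. Your alternative --- ``intersecting with $\UU^\perp$ kills all $\Fac\UU$-layers'' in a $\Fac(\UU\cup\LL)$-filtration --- only works for the bottom layer: if $T_1\subseteq T$ with $T\in\UU^\perp$ then $T_1\in\UU^\perp$, so a bottom layer in $\UU$ must vanish; but an intermediate layer $T_{i+1}/T_i$ lying in $\Fac\UU$ need not vanish, since $T_{i+1}/T_i$ is a subquotient of $T$, not a subobject, and the torsion-free class $\UU^\perp$ is not closed under subquotients. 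Filtrations cannot simply be ``rearranged'' because the relevant extensions may be non-split.

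What the paper does instead (Proposition~\ref{proposition:joinintervals}) is stronger and more careful. First it upgrades the generating description: using the join hypothesis one has $\TT=\T(\UU,\LL)=\Filt\Fac(\UU,\LL)$, and then by applying Proposition~\ref{proposition:interval} to each factor object of a brick $S\in\LL$ (a factor of $S$ decomposes into a $\UU$-subobject and a $\UU^\perp\cap\T(\UU,S)=\Filt S$ quotient) one deduces $\TT=\Filt(\UU,\LL)$ --- i.e.\ the filtration layers can be taken to be \emph{honest} objects of $\UU$ or bricks in $\LL$, not merely factors thereof. Second, for $T\in\WW$ one runs an induction on length in $\AA$: pick a bottom subobject $S\subseteq T$ with $S\in\LL$ (possible since $T\in\UU^\perp$ forces the bottom layer out of $\UU$), take the canonical $(\UU,\UU^\perp)$-sequence of $T/S$, and pull it back along $T\twoheadrightarrow T/S$ to get the $3\times3$ diagram; the pullback $P$ lands in $\UU^\perp\cap\T(\UU,S)=\Filt S$ by Proposition~\ref{proposition:interval}, and the cokernel $T''$ has smaller length and lies in $\WW$, so induction applies. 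Both the ``upgrade $\Fac$ to exact layers'' step and the pullback diagram are essential and are missing from your proposal; without them the claimed inclusion $\WW\subseteq\Filt\LL$ is not proved.
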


The isomorphisms between the complete lattices in Theorem \ref{theorem:reduction} together with Proposition \ref{proposition:endpoints} imply $\text{(a)} \Rightarrow \text{(b)}$ and $\text{(a)} \Rightarrow \text{(c)}$.
For the remaining parts, we need the following property of join intervals.

\begin{proposition}\label{proposition:joinintervals}
Let $[\UU,\TT]$ be a join interval in $\tors \AA$, set $\WW:=\UU^\perp\cap\TT$ and $\LL:=\Label{[\UU,\TT]^-}$. Then $\WW=\Filt \LL$.
\end{proposition}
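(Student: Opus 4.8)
The plan is to show the two inclusions $\Filt \LL \subseteq \WW$ and $\WW \subseteq \Filt \LL$ separately, exploiting that $\LL = \Label{[\UU,\TT]^-}$ consists of brick labels of Hasse arrows $\VV \to \UU$ with $\VV \in [\UU,\TT]$. For the inclusion $\Filt \LL \subseteq \WW$: each such label $S$ is, by Proposition \ref{proposition:interval}, the unique brick in $\UU^\perp \cap \VV$, so in particular $S \in \UU^\perp \cap \VV \subseteq \UU^\perp \cap \TT = \WW$. Since $\WW$ is closed under extensions (being the intersection of the extension-closed subcategories $\UU^\perp$ and $\TT$), we get $\Filt \LL \subseteq \WW$; this direction does not even need the join-interval hypothesis.

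The substance is the reverse inclusion $\WW \subseteq \Filt \LL$. Here I would use Proposition \ref{proposition:interval}(1), which gives $\WW = \Filt(\brick \WW)$, so it suffices to show every brick $S \in \brick \WW$ lies in $\Filt \LL$. Consider the torsion pair restricted to $S$: since $\WW \subseteq \TT$, we have $S \in \TT$, and I want to locate a Hasse arrow ending at $\UU$ whose label controls $S$. The natural candidate is $\VV := \T(\UU, S)$. One checks $\UU \subseteq \VV \subseteq \TT$ (the latter because $S \in \TT$ and $\UU \subseteq \TT$), and that $\UU^\perp \cap \VV$ contains $S$; because $S$ is a brick this forces $\UU^\perp \cap \VV = \Filt S$, which by Proposition \ref{proposition:interval}(2) means there is a Hasse arrow $\VV \to \UU$ labeled $S$ — provided $\VV \ne \UU$, which holds since $S \in \VV \setminus \UU$ (as $S \in \UU^\perp$, $S \ne 0$). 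Hence $S \in \LL$ directly. Wait — this seems to show $\brick \WW \subseteq \LL$ outright, again without using the join hypothesis; I expect the actual argument is subtler, because the issue is that $\UU^\perp \cap \T(\UU,S)$ need not equal $\Filt S$ in general: it contains $S$ but might contain other bricks too, in which case $\VV \to \UU$ is not a Hasse arrow. This is where the join-interval hypothesis $\TT = \Join [\UU,\TT]^-$ must enter.

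So the real plan is: assume $[\UU,\TT]$ is a join interval and argue that $\WW \subseteq \Filt \LL$ by a minimal-counterexample / induction-on-length argument. Write $\WW' := \Filt \LL$; by the first paragraph $\WW' \subseteq \WW$, and $\WW'$ is a wide subcategory (it is $\Filt$ of the semibrick $\LL$ by Proposition \ref{proposition:incidentsemibrick}, using Ringel's bijection). Set $\TT' := \Psi_0(\WW') := \T(\UU, \WW') = \UU * \WW'$, a torsion class in $[\UU,\TT]$ with $\UU^\perp \cap \TT' = \WW'$ by the computation in the proof of Theorem \ref{theorem:reduction}(1) — note that argument only needs $\WW'$ wide, not that the ambient interval is wide. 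Every Hasse arrow $\VV \to \UU$ in $[\UU,\TT]$ has its label in $\LL \subseteq \WW'$, so $\UU^\perp \cap \VV = \Filt S_q \subseteq \WW'$, giving $\VV = \UU * (\UU^\perp \cap \VV) \subseteq \UU * \WW' = \TT'$; thus $[\UU,\TT]^- \subseteq [\UU,\TT']$ and therefore $\TT = \Join[\UU,\TT]^- \subseteq \TT'$ by the join-interval hypothesis. Since also $\TT' \subseteq \TT$, we conclude $\TT = \TT'$, whence $\WW = \UU^\perp \cap \TT = \UU^\perp \cap \TT' = \WW' = \Filt \LL$.

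**Main obstacle.** The delicate point is verifying that the lattice isomorphism computation from the proof of Theorem \ref{theorem:reduction}(1) — specifically the identity $\UU^\perp \cap \T(\UU, \WW') = \WW'$ — goes through with $\WW'$ merely a wide subcategory contained in $\TT$ (rather than $\WW'$ arising as $\UU^\perp\cap\TT$ for an a priori wide interval); re-reading that proof, the induction only uses that $\WW'$ is wide and that $\WW' \subseteq \TT$, so this is fine but should be stated carefully. The other thing to handle with care is confirming that $\Filt\LL$ is genuinely wide: this needs $\LL$ to be a semibrick, which is exactly Proposition \ref{proposition:incidentsemibrick} applied to $[\UU,\TT]^-$, combined with Ringel's $\Filt$–$\simple$ bijection between $\sbrick\AA$ and $\wide\AA$.
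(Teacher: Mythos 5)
The easy inclusion $\Filt\LL\subseteq\WW$ and the observation in your second paragraph that the naive attempt (putting each brick of $\WW$ into $\LL$ via $\VV:=\T(\UU,S)$) cannot work are both fine. The gap is in the third paragraph and in your ``main obstacle'' paragraph, where you assert that the identity $\UU^\perp\cap\T(\UU,\WW')=\WW'$ from the proof of Theorem~\ref{theorem:reduction}(1) ``only needs $\WW'$ wide, not that the ambient interval is wide.'' That is not what that proof uses: its induction runs on length in $\WW=\UU^\perp\cap\TT$, and the wideness of \emph{this} $\WW$ (not merely of the torsion class $\X\in\tors\WW$) is invoked repeatedly --- to place $Y\in\UU^\perp\cap\T(\UU,\X)$ inside the length category $\WW$, to conclude $\Cokernel(Y'\to Y)\in\WW$, and to obtain $\WW\cap\T(\X)=\X$. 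Wideness of $\X$ alone is insufficient: take $\AA=\mod K(1\to 2)$, $\UU=0$ and $\WW'=\add P_1$ with $P_1$ the projective-injective module; then $\WW'$ is wide and $\WW'\subseteq\UU^\perp=\AA$, yet
\begin{align*}
\UU^\perp\cap\T(\UU,\WW')=\T(P_1)=\add(S_1,P_1)\ne\add P_1=\WW'.
\end{align*}

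As a consequence your reduction is circular. The step $[\UU,\TT]^-\subseteq[\UU,\TT']$ and the join hypothesis do correctly yield $\TT=\TT':=\T(\UU,\WW')$ (a genuinely nice observation), but the remaining claim $\UU^\perp\cap\TT'=\WW'$ is then literally the statement $\WW=\Filt\LL$ you set out to prove. The paper closes precisely this gap by first showing $\TT=\Filt(\UU,\LL)$ and then running a length induction that, for nonzero $T\in\WW$, locates a subobject $S\subseteq T$ with $S\in\LL$ (using $T\in\Filt(\UU,\LL)\cap\UU^\perp$), forms a pullback against the $(\UU,\UU^\perp)$-torsion subobject of $T/S$, and applies $\UU^\perp\cap\T(\UU,S)=\Filt S$ from Proposition~\ref{proposition:interval} to the resulting piece $P$; some argument of this kind --- exploiting that each $S\in\LL$ is a Hasse-arrow label, not just that $\Filt\LL$ is wide --- is unavoidable.
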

\begin{proof}
The inclusion $\Filt \LL \subseteq \WW$ is immediate from $\WW$ being closed under extensions and containing $\LL$. It remains to show the other inclusion.

Proposition \ref{proposition:interval} implies that 
taking the label $S_q \in \LL$ of the arrow $q \colon \VV \to \UU$ 
for $\VV \in [\UU,\TT]^- \setminus \{\UU\}$ 
gives a bijection $[\UU,\TT]^-\setminus\{\UU\}\to\LL$.
Since $[\UU,\TT]$ is assumed to be a join interval,
\begin{align*}
\TT=\UU \join \left(\Join_{S\in\LL}\T(\UU,S)\right)=\T(\UU,\LL)=\Filt(\Fac(\UU,\LL)).
\end{align*}

We first prove that $\TT=\Filt(\UU,\LL)$. 
It is sufficient to show $X \in \Filt(\UU,\LL)$ if $X$ is a factor object of $S\in\LL$.
Set $\UU':=\T(\UU,S)$. 
We can take a short exact sequence $0 \to X' \to X \to X'' \to 0$
with $X' \in \UU$ and $X'' \in \UU^\perp$.
Then $X'' \in \UU^\perp \cap \UU'$, so $X'' \in \Filt S$ by Proposition \ref{proposition:interval}.
Thus $X \in \Filt(\UU,\LL)$, so $\TT=\Filt(\UU,\LL)$.

To finish the proof, assume $T\in\WW$ and show $T \in \Filt \LL$ 
by induction on the length of $T$ in $\WW$. 
If $T=0$, it is clear.
If $T \ne 0$, then we have a short exact sequence
$0 \to S \to T \to T' \to 0$ with $S \in \LL$,
since $T \in \TT=\Filt(\UU,\LL)$ and $T \in \WW \subseteq \UU^\perp$.
We obtain the following commutative diagram with exact rows and columns:
\begin{align*}\xymatrix@R2em@C2em{
&&0\ar[d]&0\ar[d]\\
0\ar[r]&S\ar@{=}[d]\ar[r]&P\ar[d]\ar[r]&U\ar[d]\ar[r]&0\\
0\ar[r]&S\ar[r]&T\ar[d]\ar[r]&T'\ar[d]\ar[r]&0\\
&&T''\ar[d]\ar@{=}[r]&T''\ar[d]\\
&&0&0
}\end{align*}
where $U\in\UU$ and $T''\in\UU^\perp$. 
The column $0 \to U \to T' \to T'' \to 0$ is the canonical sequence of $T'$ with respect to $(\UU,\UU^\perp)$, 
and the row $0 \to S \to P \to U \to 0$ is obtained via pullback. 
Note that $T''\in\TT$ as a proper factor of $T$, 
hence we can apply the induction hypothesis to $T'' \in \UU^\perp \cap \TT=\WW$ 
to obtain $T''\in\Filt\LL$. 
Moreover, $P\in\UU^\perp$ as a subobject of $T$ and $P\in\Filt(\UU,S)$. Thus $P\in\Filt S$ by Proposition \ref{proposition:interval}. 
Therefore $T\in\Filt\LL$ as an extension of $T''$ by $P$.
\end{proof}

Now we can complete the proof of Theorem \ref{theorem:joinintervals}.

\begin{proof}[Proof of Theorem \ref{theorem:joinintervals}]
Set $\WW:=\UU^\perp\cap\TT$ as in the statement of the theorem.

$\text{(a)} \Rightarrow \text{(b)}$, $\text{(a)} \Rightarrow \text{(c)}$: 
These follow from Theorem \ref{theorem:reduction}.

$\text{(b)} \Rightarrow \text{(a)}$:
Define $\LL:=\Label{[\UU,\TT]^-}$. 
Proposition \ref{proposition:joinintervals} says that $\WW=\Filt\LL$, and $\LL$ is a semibrick by Proposition \ref{proposition:incidentsemibrick}. Therefore $\WW$ is a wide subcategory of $\AA$ (see \cite[1.2]{R}).

$\text{(c)} \Rightarrow \text{(a)}$: 
Define $\LL:=\Label{[\UU,\TT]^+}$. 
If $[\UU,\TT]$ is a meet interval, then $[\TT^\perp,\UU^\perp]$ is a join interval in $\torf \AA$,
and $\Label{[\TT^\perp,\UU^\perp]^-}=\Label{[\UU,\TT]^+}=\LL$
is a semibrick by Proposition \ref{proposition:incidentsemibrick}.
Thus, similarly to Proposition \ref{proposition:joinintervals},
we have 
\[{^\perp(\TT^\perp)}\cap\UU^\perp = \Filt \LL \in \wide \AA.\] 
Therefore
$\WW=\UU^\perp\cap\TT={^\perp(\TT^\perp)}\cap\UU^\perp\in\wide \AA$.
\end{proof}

We end this section by giving an example illustrating Theorem \ref{theorem:joinintervals}.

\begin{example}
We keep the setting of Example \ref{example:Kronecker}.
Note 
\[ [\UU,\TT]^-=\{ \T(\UU,S_\lambda) \mid \lambda \in \mathbb{P}^1(K) \}.\]
One can check that $\TT$ coincides with the join $\Join_{\lambda \in \mathbb{P}^1(K)} \T(\UU,S_\lambda)$, which means that $[\UU,\TT]$ is a join interval,
and also a wide interval by Theorem \ref{theorem:joinintervals}. 
\end{example}

\section{Classification of wide intervals in terms of Ingalls--Thomas correspondences}\label{section:associatedwidesubcategory}

In \cite{IT} Ingalls--Thomas associate to every $\TT\in\tors \AA$ the so called 
\textit{left wide subcategory} 
\begin{align}\label{WL}
\WL(\TT):=\{X\in\TT \mid \text{for all $Y \in \TT$ and $g \colon Y \to X$, 
$\Kernel g \in \TT$} \}
\end{align}
in the case that $\AA=\mod A$ for a hereditary algebra $A$; Marks--\v{S}\v{t}ov\'{i}\v{c}ek studied $\WL(\TT)$ in \cite{MS} 
for arbitrary finite-dimensional algebras. 
Dually to the left wide subcategories associated to torsion classes, 
one can define the \textit{right wide subcategory} associated to $\FF\in\torf \AA$ 
as
\begin{align*}
\WR(\FF):=\{X\in\FF \mid \text{for all $X \in \FF$ and $f \colon X \to Y$, 
$\Cokernel f \in \FF$}\}.
\end{align*}

Their following result implies that there exists an injection from $\wide \AA$ to $\tors \AA$.
We remark that the proof in \cite{MS} also works for abelian length categories.

\begin{proposition}[{\cite[Proposition 2.14]{IT}}, {\cite[Proposition 3.3]{MS}}]\label{proposition:MS}
The maps
\begin{align*}\xymatrix@R1.2em@C4em{
\wide \AA\ar@<0.5ex>[r]^{\T}&\ar@<0.5ex>[l]^{\WL}\tors \AA
}\end{align*}
satisfy $\WL\circ\T=\id$. 
\end{proposition}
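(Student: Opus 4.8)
\emph{Proof proposal.} Fix $\WW\in\wide\AA$ and put $\TT:=\T(\WW)$; I will show $\WL(\TT)=\WW$, which is exactly $\WL\circ\T=\id$. The plan is to prove the two inclusions separately, using throughout that $\TT=\Filt(\Fac\WW)$ (see \cite[Lemma 3.1]{MS}), so that every object of $\TT$ carries a finite filtration whose subquotients are factor objects of objects of $\WW$; the length of such a filtration (bounded by the composition length in the length category $\AA$) is the induction parameter for everything below. The single structural input I would isolate first is the following \emph{Sub-lemma}: if $Z\subseteq X$ in $\AA$ with $X\in\WW$ and $Z\in\TT$, then $Z\in\WW$. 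To see this, induct on the length of $Z$. If $Z=0$ this is trivial; otherwise pick the bottom step $Z_1$ of a $\Fac\WW$-filtration of $Z$, so $0\neq Z_1\in\Fac\WW$ and $Z/Z_1\in\Filt(\Fac\WW)=\TT$ is shorter. Choosing an epimorphism $W\twoheadrightarrow Z_1$ with $W\in\WW$, the composite $W\twoheadrightarrow Z_1\hookrightarrow X$ is a morphism between objects of $\WW$ with image $Z_1$, and wide subcategories are closed under images, so $Z_1\in\WW$; hence $X/Z_1\in\WW$ since $\WW$ is closed under cokernels of its morphisms. Now $Z/Z_1\subseteq X/Z_1\in\WW$ with $Z/Z_1\in\TT$ shorter, so $Z/Z_1\in\WW$ by induction, and finally $Z\in\WW$ because $\WW$ is closed under extensions.

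\emph{The inclusion $\WW\subseteq\WL(\TT)$.} Let $X\in\WW$ and $g\colon Y\to X$ with $Y\in\TT$; I must show $\Kernel g\in\TT$. First, $\Image g$ is a factor object of $Y\in\TT$, hence lies in $\TT$, and it is a subobject of $X\in\WW$, so $\Image g\in\WW$ by the Sub-lemma; replacing $X$ by $\Image g$ we may assume $g$ is an epimorphism onto an object of $\WW$ (this does not change $\Kernel g$). Now induct on the length of $Y$. If $Y\in\Fac\WW$, choose an epimorphism $W_0\twoheadrightarrow Y$ with $W_0\in\WW$; then $W_0\twoheadrightarrow Y\twoheadrightarrow X$ is an epimorphism between objects of $\WW$, so its kernel $W_0'\in\WW$, and since $W_0'$ contains $\Kernel(W_0\to Y)$ we get $\Kernel g\cong W_0'/\Kernel(W_0\to Y)\in\Fac\WW\subseteq\TT$. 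In general let $Y'\subsetneq Y$ be the bottom step of a $\Fac\WW$-filtration, so $Y'\in\Fac\WW$ and $Y/Y'\in\TT$ is shorter. The restriction $g|_{Y'}$ surjects onto $g(Y')\subseteq X$, which lies in $\TT$ and hence in $\WW$ by the Sub-lemma, so the base case gives $\Kernel g\cap Y'=\Kernel(g|_{Y'})\in\TT$; meanwhile $X/g(Y')\in\WW$ and the induced epimorphism $Y/Y'\twoheadrightarrow X/g(Y')$ has kernel $(\Kernel g+Y')/Y'$, which lies in $\TT$ by the induction hypothesis. Thus $\Kernel g$, being an extension of $(\Kernel g+Y')/Y'$ by $\Kernel g\cap Y'$, lies in $\TT$.

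\emph{The inclusion $\WL(\TT)\subseteq\WW$.} Let $X\in\WL(\TT)$, so in particular $X\in\TT$, and induct on the length of $X$. If $X\in\Fac\WW$, choose an epimorphism $p\colon W\twoheadrightarrow X$ with $W\in\WW$; since $W\in\TT$, the defining property of $\WL(\TT)$ gives $\Kernel p\in\TT$, whence $\Kernel p\in\WW$ by the Sub-lemma, and therefore $X\cong\Cokernel(\Kernel p\hookrightarrow W)\in\WW$. In general let $X'\subsetneq X$ be the bottom step of a $\Fac\WW$-filtration. Every morphism $Y\to X'$ with $Y\in\TT$, composed with $X'\hookrightarrow X$, has the same kernel, which lies in $\TT$ because $X\in\WL(\TT)$; hence $X'\in\WL(\TT)$, and $X'\in\WW$ by the induction hypothesis. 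Now $X/X'\in\Fac\WW$, so pick an epimorphism $W\twoheadrightarrow X/X'$ with $W\in\WW$ and form the pullback $P:=X\times_{X/X'}W$. Then $P\twoheadrightarrow X$ has kernel $\Kernel(W\to X/X')\in\WW$, and the other projection gives a short exact sequence $0\to X'\to P\to W\to 0$, so $P\in\WW$ as an extension of objects of $\WW$; consequently $X\cong\Cokernel(\Kernel(P\to X)\hookrightarrow P)\in\WW$. Combining the two inclusions yields $\WL(\T(\WW))=\WW$.

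\emph{Where the difficulty lies.} The substance of the statement is precisely that $\WL(\T(\WW))$ cannot be strictly larger than $\WW$, i.e. that no torsion object outside $\WW$ can satisfy the kernel condition defining $\WL$. Ruling this out is what forces the layer-by-layer extraction from a $\Fac\WW$-filtration, invoking at each layer the $\WL$-condition together with closure of $\WW$ under images, cokernels and extensions; the Sub-lemma is the glue that makes each layer land back inside $\WW$. The fiddliest single point is the inductive step of the second inclusion, where one must realise $X$ as a cokernel of a morphism between objects of $\WW$ via the pullback $P$. One should also confirm at the outset that the filtration length is a legitimate well-founded induction parameter, which is immediate since it is bounded by the composition length in $\AA$.
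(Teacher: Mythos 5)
The paper does not prove this proposition itself; it cites \cite{IT,MS} and merely remarks that the proof in \cite{MS} carries over to abelian length categories, so there is no internal proof to compare your argument against. Your strategy---isolate a ``Sub-lemma'' that subobjects of $\WW$-objects lying in $\TT$ must already be in $\WW$, then chase $\Fac\WW$-filtrations by induction on length---is sound and self-contained. The Sub-lemma and the inclusion $\WW\subseteq\WL(\TT)$ are correct as written.

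There is, however, a genuine slip in the inductive step of $\WL(\TT)\subseteq\WW$. You take $X'\subsetneq X$ to be the \emph{bottom} step of a $\Fac\WW$-filtration $0=X_0\subseteq X_1\subseteq\cdots\subseteq X_n=X$, i.e.\ $X'=X_1$, and then assert ``$X/X'\in\Fac\WW$''. This is false in general: $X/X_1$ only lies in $\Filt(\Fac\WW)=\TT$, not in $\Fac\WW$, once $n\ge 3$, so the epimorphism $W\twoheadrightarrow X/X'$ with $W\in\WW$ need not exist. What you actually need is $X'=X_{n-1}$, the top proper term, so that $X/X'=X_n/X_{n-1}\in\Fac\WW$ by definition. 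Your preceding paragraph adapts verbatim: any proper subobject $X'\subseteq X$ with $X'\in\TT$ lies in $\WL(\TT)$ (since $\Kernel(Y\to X'\hookrightarrow X)=\Kernel(Y\to X')$ and $X\in\WL(\TT)$), and $X_{n-1}$ is in $\TT$ and strictly shorter, so the induction hypothesis gives $X_{n-1}\in\WW$. Alternatively, you could keep $X'=X_1$ but then show $X/X'\in\WL(\TT)$ (pull back $g\colon Y\to X/X'$ along $X\twoheadrightarrow X/X'$ to get $P'\in\TT$ with $\Kernel(P'\to X)=\Kernel g$), apply induction to $X/X'$, and finish by extension-closure of $\WW$---a cleaner route that avoids the pullback $P$ entirely. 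Finally, a small ordering issue: you assert $\Kernel(W\to X/X')\in\WW$ before establishing anything about $P$, but the cleanest justification is to first note $P\in\TT$ from the sequence $0\to X'\to P\to W\to 0$, deduce $\Kernel(P\to X)\in\TT$ because $X\in\WL(\TT)$, identify $\Kernel(P\to X)\cong\Kernel(W\to X/X')$ as a subobject of $P\in\WW$, and only then invoke the Sub-lemma. With these two repairs the proof is complete.
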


In general, $\T \colon \wide \AA \to \tors \AA$ is not surjective,
so it is important to determine the image of the map $\T$.
We will answer this problem in the next subsection,
and for this purpose, we study the relationship between $\TT \in \tors \AA$ and 
the left wide subcategory $\WL(\TT)$ in this section.

First, we prepare the following property on Serre subcategories of $\WL(\TT)$.

\begin{lemma}\label{lemma:leftwidesubcat}
Let $\TT\in\tors \AA$, $\WW\in\Serre(\WL(\TT))$ and 
$f \colon X\xto{}Y$ a homomorphism with $X\in\TT$ and $Y\in\F(\WW)$. 
Then $\Image f \in\WW$ and $\Kernel f \in\TT$.
\end{lemma}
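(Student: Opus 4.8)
The plan is to first handle the case where $X \in \TT$ maps into an object $Y$ that actually lies in $\WW$ itself, and then bootstrap to the general case $Y \in \F(\WW)$ by peeling off a filtration. Recall $\F(\WW) = \Filt(\Sub \WW)$, and $\WW = \Filt \S$ for a semibrick $\S \subseteq \simple(\WL(\TT))$ since $\WW$ is a Serre subcategory of the wide subcategory $\WL(\TT)$; in particular $\WW$ is closed under subobjects in $\WL(\TT)$, hence in $\AA$ (a subobject of an object of $\WW$ taken in $\AA$ is again a subobject taken in $\WL(\TT)$, because $\WL(\TT)$ is closed under kernels in $\AA$). So the first thing I would record is that $\Sub \WW$, the kernels, images and cokernels of maps between objects of $\WW$, all lie in $\WW$.

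For the base case, suppose $Y \in \WW \subseteq \WL(\TT)$ and $f\colon X \to Y$ with $X \in \TT$. Factor $f$ as $X \twoheadrightarrow \Image f \hookrightarrow Y$. Since $\TT$ is closed under factor objects, $\Image f \in \TT$; and since $\WW$ is closed under subobjects, $\Image f \in \WW$. Now view $\Image f \in \WL(\TT)$ and consider the epimorphism $X \twoheadrightarrow \Image f$ with $X \in \TT$: by the very definition of $\WL(\TT)$ in (\ref{WL}), its kernel lies in $\TT$, i.e. $\Kernel f \in \TT$. This settles the lemma when $Y \in \WW$.

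For the general case, write $Y \in \F(\WW) = \Filt(\Sub\WW)$ and induct on the length of a filtration of $Y$ with subfactors in $\Sub \WW$ — or more cleanly, use the torsion pair $({}^\perp\F(\WW), \F(\WW))$ together with a short exact sequence $0 \to Y' \to Y \to Y'' \to 0$ where $Y' \in \Sub\WW$ and $Y'' \in \F(\WW)$, inducting on the length of $Y$. Here is the step I expect to be the crux: given $f \colon X \to Y$ with $X \in \TT$, I want to split the analysis along $0 \to Y' \to Y \to Y'' \to 0$. Let $Z := \Kernel(X \xrightarrow{f} Y \twoheadrightarrow Y'')$, so there is an exact sequence $0 \to Z \to X \to \Image(f \text{ into } Y'') \to 0$ with $\Image(\cdot) \hookrightarrow Y''$, hence $\Image(\cdot) \in \Sub(\F(\WW)) = \F(\WW)$ and of smaller length; $Z \in \TT$ since $\TT$ is closed under... no — $Z$ is a subobject of $X$, and $\TT$ need not be closed under subobjects. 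This is the obstacle. I would instead argue: $Z$ is a subobject of $X$ and the composite $X \to Y''$ has image in $\F(\WW)$ of length $< \ell(Y)$ (if $Y' \neq 0$), so by the induction hypothesis applied to $X \to Y''$ the kernel $Z$ of $X \to Y''$ satisfies $Z \in \TT$ and $\Image(X\to Y'') \in \WW$. Then $f$ restricts to $Z \to Y'$ with $Y' \in \Sub\WW$, and I reduce to the case "$Y \in \Sub\WW$" rather than "$Y \in \WW$": by the base case argument, factoring $Z \to Y'$ and using that $Y'$ embeds in an object of $\WW$, I get $\Image(Z \to Y') \in \WW$ (subobject of a subobject of something in $\WW$) in $\TT$ (factor of $Z \in \TT$), and then $\Kernel(Z \to Y') \in \TT$ directly from (\ref{WL}) applied in $\WL(\TT)$ — noting $\Image(Z \to Y') \in \WW \subseteq \WL(\TT)$. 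Finally $\Image f$ sits in an extension $0 \to \Image(Z \to Y') \to \Image f \to \Image(X \to Y'') \to 0$ with both ends in $\WW$, hence $\Image f \in \WW$ since $\WW$ is extension-closed; and $\Kernel f = \Kernel(Z \to Y')$ because the snake lemma (or a direct diagram chase) identifies $\Kernel(X \to Y)$ with $\Kernel(Z \to Y')$, so $\Kernel f \in \TT$.

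The delicate points I would be careful about: (i) that subobjects and quotients computed in $\AA$ of objects of $\WW$ stay in $\WW$, which uses that $\WW$ is Serre in the abelian category $\WL(\TT)$ and $\WL(\TT)$ is closed under kernels/cokernels in $\AA$; (ii) setting up the induction on $\ell(Y)$ so that the base case is genuinely "$Y \in \Sub\WW$" — equivalently $Y$ a subobject of an object of $\WW$ — not just "$Y \in \WW$"; and (iii) the diagram chase identifying $\Kernel f$ with $\Kernel(Z \to Y')$ and exhibiting $\Image f$ as the claimed extension. None of these is hard, but the bookkeeping in (ii)–(iii) is where an error could slip in, so that is the part deserving a careful written-out argument rather than a one-line dismissal.
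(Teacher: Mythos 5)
Your proof peels the $\F(\WW)$-filtration of $Y$ from the bottom (a short exact sequence $0 \to Y' \to Y \to Y'' \to 0$ with $Y' \in \Sub \WW$ and $Y'' \in \F(\WW)$), whereas the paper peels from the top (quotient in $\Sub\WW$, subobject in $\F(\WW)$). That alternative decomposition is perfectly workable, but there is a genuine error in the preliminary closure property you rely on throughout: you assert $\Sub\WW \subseteq \WW$, reasoning that a subobject in $\AA$ of an object of $\WW$ ``is again a subobject taken in $\WL(\TT)$, because $\WL(\TT)$ is closed under kernels in $\AA$.'' A wide subcategory is closed under kernels of morphisms between \emph{its own objects}; this does not make it closed under arbitrary subobjects taken in $\AA$. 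Indeed $\Sub\WW\subseteq\WW$ is false in general: take $A = K(1\to 2)$, let $P_1$ be the indecomposable projective at vertex $1$, and set $\TT := \Fac P_1 = \add(P_1,S_1)$. Checking (\ref{WL}) directly gives $\WL(\TT) = \add P_1$ (the kernel $S_2$ of $P_1 \twoheadrightarrow S_1$ is not in $\TT$, so $S_1\notin\WL(\TT)$), and $\WW := \add P_1 \in \Serre(\WL(\TT))$ has $S_2 \in \Sub\WW \setminus \WW$.

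This false claim is precisely what you invoke at the crux of your base case — to conclude $\Image f \in \WW$, and later $\Image(Z\to Y') \in \WW$, from ``subobject of something in $\WW$.'' The missing step, which the paper supplies when it proves $Y' \in \WW$, is the following: you already know $\Image f \in \TT$ (a quotient of $X$), and you must first combine \emph{this} with the monomorphism $\Image f \hookrightarrow W \in \WL(\TT)$ to deduce $\Image f \in \WL(\TT)$ directly from the defining condition (\ref{WL}): any morphism from an object of $\TT$ to $\Image f$, composed with the inclusion into $W$, has the same kernel, and that kernel lies in $\TT$ because $W \in \WL(\TT)$. Only \emph{then} does the Serre property of $\WW$ inside $\WL(\TT)$ give $\Image f \in \WW$. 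You record the fact $\Image f \in \TT$ but never use it for this purpose; the general closure property you substitute for the argument is false, so as written the proof does not go through.
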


\begin{proof}
We may assume that $f$ is surjective by replacing $Y$ by $\Image f$. We proceed by induction on the length of $Y$.

If $Y=0$, then the claim is obvious, so assume $Y \ne 0$.
Since $Y\in\F(\WW)=\Filt(\Sub\WW)$, there exists a short exact sequence
\begin{align*}
0\xto{}Y''\xto{}Y\xto{}Y'\xto{}0
\end{align*}
with $Y'\in\Sub\WW$ non-zero and $Y''\in\F(\WW)$. 
We first show $Y' \in \WW$.
Since $Y' \in \Sub \WW$, $Y'$ is a subobject of some $W \in \WW \subseteq \WL(\TT)$.
On the other hand we find $Y'\in\TT$, because $Y'$ is a factor object of $Y \in \TT$.
These two statements imply that $Y' \in \WL(\TT)$ by checking the definition of $\WL(\TT)$.
In the wide subcategory $\WL(\TT)$, $Y'$ is a subobject of some $W \in \WW$,
so we obtain $Y'\in\WW$, since $\WW\in\Serre(\WL(\TT))$.

Next we prove $Y'' \in \WW$.
Consider the following commutative diagram with exact rows:
\begin{align*}\xymatrix@R2em@C2em{
0\ar[r]&\Kernel f \ar[r]\ar[d]&X\ar[r]^f\ar@{=}[d]&Y\ar[d]\\
0\ar[r]&\Kernel f' \ar[r]&X\ar[r]^{f'}&Y'.
}\end{align*}
Again, the definition of $\WL(\TT)$ yields $\Kernel f' \in\TT$. 
Since $Y''$ is a subobject of $Y \in \F(\WW)$, we have $Y'' \in \F(\WW)$.
Now 
\begin{align*}
0\xto{}\Kernel f \xto{}\Kernel f' \xto{}Y''\xto{}0
\end{align*}
is exact and, by applying the induction hypothesis to 
$\Kernel f' \to Y''$, by induction we conclude that $Y''\in\WW$. 
Therefore $Y\in\WW$ as an extension of $Y''$ by $Y'$.
Now $\Kernel f \in \TT$ follows from the definition of $\WL(\TT)$.
\end{proof}

We can generalize Proposition \ref{proposition:interval} as follows.
This is a mutation of a torsion class $\TT$ at a Serre subcategory of $\WL(\TT)$.

\begin{proposition}\label{proposition:serremutations}
Let $\TT\in\tors \AA$, $\WW\in\Serre(\WL(\TT))$ and set $\UU:=\TT\cap{^\perp\WW}\in\tors\AA$. Then $\TT=\UU*\WW$ and $\WW=\UU^\perp\cap\TT$.
\end{proposition}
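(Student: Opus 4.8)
I want to show two things about $\UU:=\TT\cap{}^\perp\WW$, where $\TT\in\tors\AA$ and $\WW\in\Serre(\WL(\TT))$: first that $\UU\in\tors\AA$, and then the two identities $\TT=\UU*\WW$ and $\WW=\UU^\perp\cap\TT$. The key tool is Lemma \ref{lemma:leftwidesubcat}, which controls morphisms from objects of $\TT$ into objects of $\F(\WW)$. The first observation is that $\UU$ is a torsion class: it is an intersection of $\TT\in\tors\AA$ with ${}^\perp\WW$, and ${}^\perp\WW$ is closed under extensions and factor objects (indeed it is a torsion class, being the left part of the torsion pair $({}^\perp\WW,\F(\WW))$), so $\UU$ is closed under extensions and factor objects as well; hence $\UU\in\tors\AA$.

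\textbf{Establishing $\TT=\UU*\WW$.} Let $T\in\TT$. Form the canonical sequence of $T$ with respect to the torsion pair $({}^\perp\WW,\F(\WW))$, namely $0\to U\to T\to W\to 0$ with $U\in{}^\perp\WW$ and $W\in\F(\WW)$. Since $U$ is a subobject of $T\in\TT$ and $\TT$ is... wait, $\TT$ is not closed under subobjects in general. The better move: observe $W$ is a factor object of $T\in\TT$, so $W\in\TT$; now apply Lemma \ref{lemma:leftwidesubcat} to the identity map $T\to T\twoheadrightarrow W$, viewing $W\in\F(\WW)$ and $T\in\TT$. The lemma gives $\Image(T\to W)=W\in\WW$ and $\Kernel(T\to W)=U\in\TT$. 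Combined with $U\in{}^\perp\WW$ this yields $U\in\TT\cap{}^\perp\WW=\UU$, so the sequence $0\to U\to T\to W\to 0$ witnesses $T\in\UU*\WW$. Conversely, any $X\in\UU*\WW$ sits in $0\to U\to X\to W\to 0$ with $U\in\UU\subseteq\TT$ and $W\in\WW\subseteq\WL(\TT)\subseteq\TT$, and $\TT$ is closed under extensions, so $X\in\TT$. Hence $\TT=\UU*\WW$.

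\textbf{Establishing $\WW=\UU^\perp\cap\TT$.} For the inclusion $\WW\subseteq\UU^\perp\cap\TT$: every $W\in\WW$ lies in $\TT$ (as above), and $\UU\subseteq{}^\perp\WW$ gives $\Hom_\AA(U,W)=0$ for all $U\in\UU$, so $W\in\UU^\perp$. For the reverse inclusion, take $Y\in\UU^\perp\cap\TT$. Using $\TT=\UU*\WW$, write $0\to U\to Y\to W\to 0$ with $U\in\UU$, $W\in\WW$; by the uniqueness in Lemma \ref{lemma:intervalcategory} applied to the interval $[\UU,\TT]$ (whose associated category is $\UU^\perp\cap\TT$), this is the canonical sequence for $(\UU,\UU^\perp)$, so $U=t_\UU Y$. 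But $Y\in\UU^\perp$ forces $t_\UU Y=0$, i.e. $U=0$ and $Y\cong W\in\WW$. Thus $\UU^\perp\cap\TT\subseteq\WW$.

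\textbf{Anticipated obstacle.} The only subtle point is the application of Lemma \ref{lemma:leftwidesubcat} in the proof of $\TT=\UU*\WW$: one must be careful that the hypotheses $X\in\TT$, $Y\in\F(\WW)$ of that lemma are exactly met by the quotient map $T\twoheadrightarrow W$ arising from the $({}^\perp\WW,\F(\WW))$-canonical sequence, and that its conclusion $\Kernel f\in\TT$ is what upgrades $U\in{}^\perp\WW$ to $U\in\UU$. Everything else — that $\UU$ is a torsion class, the extension-closure arguments, and the uniqueness of canonical sequences — is routine once Lemma \ref{lemma:leftwidesubcat} is in hand. I do not expect any real difficulty beyond bookkeeping.
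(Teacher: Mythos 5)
Your proposal is correct and takes essentially the same approach as the paper: both apply Lemma \ref{lemma:leftwidesubcat} to the $({}^\perp\WW,\F(\WW))$-canonical sequence of an arbitrary $T\in\TT$ to obtain $\TT\subseteq\UU*\WW$, and both deduce $\WW=\UU^\perp\cap\TT$ from the decomposition $\TT=\UU*\WW$. The only cosmetic differences are that you spell out the torsion-class check for $\UU$ (which the paper folds into the statement) and that your final step invokes the uniqueness in Lemma \ref{lemma:intervalcategory} where a direct $\Hom(\UU,\UU^\perp)=0$ argument would also suffice.
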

\begin{proof}
The inclusion $\UU*\WW\subseteq\TT$ is obvious from $\UU,\WW\subseteq\TT$.

For the other inclusion, let $T\in \TT$ and take the canonical sequence 
\begin{align*}
0\xto{}X\xto{}T\xto{}Y\xto{}0
\end{align*} 
with $X\in{^\perp\WW}$ and $Y\in\F(\WW)$. Lemma \ref{lemma:leftwidesubcat} implies $Y\in\WW$ and $X\in\TT\cap{^\perp\WW}=\UU$, hence $T\in\UU*\WW$.

Next we show $\WW=\UU^\perp \cap \TT$.
The inclusion $\WW \subseteq \UU^\perp \cap \TT$ is easy to check.
The other inclusion $\UU^\perp \cap \TT \subseteq \WW$ follows from $\TT=\UU*\WW$.
\end{proof}

We also need the following technical lemma,
which is a generalization of \cite[Lemma 3.7]{DIRRT} and \cite[Lemma 2.7]{A}.

\begin{lemma}\label{lemma:littleboat}
Let $\UU\in\tors \AA$ and $S\in\UU^\perp$ a brick. Set $\TT:=\T(\UU,S)$. 
\begin{itemize}
\item[(1)]
Every homomorphism $f \colon X\xto{}S$ in $\TT$ is zero or epic and satisfies $\Kernel f \in\TT$.
\item[(2)]
The brick $S$ belongs to $\simple(\WL(\TT))$.
\end{itemize}
\end{lemma}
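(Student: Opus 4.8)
The plan is to establish part~(1) by induction on length in $\AA$ and then deduce part~(2). Since $\TT=\T(\UU,S)=\Filt(\Fac(\add(\UU,S)))$, I would first treat the case $X\in\Fac(\add(\UU,S))$: choose an epimorphism $p\colon M\twoheadrightarrow X$ with $M=U^{a}\oplus S^{b}$ for some $U\in\UU$. The composite $fp\colon M\to S$ vanishes on $U^{a}$ because $\Hom_{\AA}(U,S)=0$ (as $S\in\UU^{\perp}$), hence factors as $fp=h\circ\pi$ with $\pi\colon M\twoheadrightarrow S^{b}$ the projection and $h\colon S^{b}\to S$. As $\Hom_{\AA}(S^{b},S)\cong\End_{\AA}(S)^{b}$ and $\End_{\AA}(S)$ is a division ring, either $h=0$, and then $f=0$ since $p$ is epic, or some coordinate of $h$ is invertible, so that $h$ and $fp$ are split epimorphisms and $f$ is epic. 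In the latter case $\Kernel h$ is a direct summand of $S^{b}$, hence lies in $\add S$, and from $U^{a}=\Kernel\pi\subseteq\Kernel(fp)=:L$ we obtain a short exact sequence $0\to U^{a}\to L\to\Kernel h\to0$, so that $L\in\UU*\add S\subseteq\TT$. Since $p$ restricts to an epimorphism $L\twoheadrightarrow\Kernel f$ with kernel $\Kernel p\subseteq L$, and $\TT$ is closed under factor objects, $\Kernel f\in\TT$.

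For general $X\in\TT$ I would induct on the length of $X$ in $\AA$. If $X=0$ or $X\in\Fac(\add(\UU,S))$ the previous paragraph applies, so suppose otherwise; then a filtration witnessing $X\in\Filt(\Fac(\add(\UU,S)))$ has length at least two and yields a short exact sequence $0\to X'\to X\to X''\to0$ with $0\ne X'\in\TT$ of strictly smaller length and $X''\in\Fac(\add(\UU,S))$. If $f|_{X'}=0$, then $f$ factors through some $\bar f\colon X''\to S$, to which the generator case applies: if $\bar f=0$ then $f=0$, and if $\bar f$ is epic then so is $f$ and $\Kernel f$ is an extension of $\Kernel\bar f\in\TT$ by $X'\in\TT$. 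If $f|_{X'}\ne0$, the induction hypothesis makes $f|_{X'}$ epic with $\Kernel(f|_{X'})\in\TT$; then $f$ is epic, and the second isomorphism theorem (using $X'+\Kernel f=X$) gives a short exact sequence $0\to\Kernel(f|_{X'})\to\Kernel f\to X''\to0$, so $\Kernel f\in\TT$. This proves~(1).

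Part~(1) is precisely the condition that $\Kernel g\in\TT$ for every $g\colon Y\to S$ with $Y\in\TT$, so $S\in\WL(\TT)$ by definition (note $S\in\TT$ as $\TT=\T(\UU,S)$). Since $\WL(\TT)$ is a wide, hence abelian, subcategory of $\AA$ whose inclusion is exact, every subobject of $S$ in $\WL(\TT)$ underlies a subobject $S'\subseteq S$ in $\AA$ with $S'\in\TT$; if $S'\ne0$, the inclusion $S'\hookrightarrow S$ is a nonzero morphism in $\TT$, hence epic by~(1), hence an isomorphism. Thus $S$ has no subobject in $\WL(\TT)$ other than $0$ and itself, and being nonzero it is simple in $\WL(\TT)$, i.e.\ $S\in\simple(\WL(\TT))$. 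The only delicate step is the claim $\Kernel f\in\TT$: because $\TT$ is \emph{not} closed under subobjects one cannot argue inside $X$, and the device is to pull $f$ back along the presentation $U^{a}\oplus S^{b}\twoheadrightarrow X$, where the kernel manifestly lies in $\UU*\add S\subseteq\TT$, and then use that $\TT$ is closed under factor objects; the remainder is the length induction, which runs parallel to \cite[Lemma~3.7]{DIRRT} and \cite[Lemma~2.7]{A}.
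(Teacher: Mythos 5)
Your proof is correct, and it takes a structurally different route from the paper's, so a comparison is worthwhile. The paper uses the description $\T(\UU,S)=\Filt(\UU,\Fac S)$ and inducts on the length of $X$ by peeling off a nonzero subobject $Y\subseteq X$ that lies in $\UU$ or in $\Fac S$. If $f|_Y\neq 0$ then $Y\notin\UU$ (as $S\in\UU^\perp$), so $Y\in\Fac S$; composing an epimorphism $S^n\twoheadrightarrow Y$ with $f$ gives a nonzero, hence split, endo-epimorphism of $S$, so $f$ is itself split epic and $\Kernel f\in\TT$ as a direct summand of $X$, with no further work. If $f|_Y=0$ one factors $f$ through $X/Y$ and recurses. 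You instead work with $\T(\UU,S)=\Filt(\Fac(\add(\UU,S)))$, establish a separate base case for $X\in\Fac(\add(\UU,S))$ by pulling $f$ back along a presentation $U^a\oplus S^b\twoheadrightarrow X$ (so that the kernel is manifestly in $\UU*\add S\subseteq\TT$ before being pushed forward), and then peel a quotient rather than a subobject, using the second isomorphism theorem in the case $f|_{X'}\neq 0$. Both arguments are correct length inductions built on the same case split; the paper's version is more economical because the peeled-off piece $Y$ is already of one of the two directly usable types and the ``$fa\neq 0$'' branch terminates immediately with a split epimorphism, whereas your version pays for the top-down decomposition with the extra presentation-pullback base case (itself a nice, self-contained argument). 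Your remark that the device is to pass to an object where the kernel is visibly in $\TT$, precisely because $\TT$ is not closed under subobjects, is exactly the right reading of the subtlety. For part (2) the paper states only that it follows from (1); your spelled-out justification via $\WL(\TT)$-subobjects being $\AA$-subobjects in $\TT$ and hence, by (1), zero or all of $S$, is a correct and complete version of that step.
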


\begin{proof}
(1)
We use induction on the length of $X$ in $\AA$.
If $X=0$ the claim is clear, so we assume $X \ne 0$.
Since $X\in\T(\UU,S)=\Filt(\UU,\Fac S)$,
we can take an exact sequence 
$0\to Y\xrightarrow{a}X\to X'\to0$ 
such that $Y$ is non-zero and belongs to $\UU$ or $\Fac S$. 
Then $Y \in \TT$ in both cases.

If $fa \colon Y\to S$ is non-zero, then $Y\in\Fac S$ holds, since $S \in \UU^\perp$.
There exists an epimorphism $g \colon S^n \to Y$, and the composite
$fag \colon S^n \to S$ is non-zero, and hence a split epimorphism of $S$.
Thus $f$ is also a split epimorphism and $\Kernel f \in\TT$ as desired.

On the other hand, if $fa \colon Y\to S$ is zero, 
then $Y \subseteq \Kernel f$, so we have a commutative diagram of exact sequences
\begin{align*}\xymatrix@R1.2em@C2em{
&&0\ar[d]&0\ar[d]\\
0\ar[r]&Y\ar[r]\ar@{=}[d]&\Kernel f\ar[r]\ar[d]&\Kernel f'\ar[r]\ar[d]&0\\
0\ar[r]&Y\ar[r]^a&X\ar[r]\ar[d]^f&X'\ar[r]\ar[d]^{f'}&0\\
&&S\ar@{=}[r]&S
}\end{align*}
By the induction hypothesis, $f'$ is either zero or epic, and satisfies $\Kernel f'\in\TT$.
Thus $f$ is either zero or epic, and satisfies $\Kernel f\in\TT$.

(2)
This follows immediately from (1).
\end{proof}


Thus $\simple(\WL(\TT))$ is given by the brick labeling of $\Hasse(\tors \AA)$.

\begin{proposition}\label{proposition:leftwidesubcat}
Let $\TT\in\tors \AA$ and set $\VV:=\TT \cap {^\perp}\WL(\TT)\in\tors\AA$.
Then we have $\simple(\WL(\TT))=\Label{[\VV,\TT]^+}=\Label{[0,\TT]^+}$.
\end{proposition}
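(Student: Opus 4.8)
The plan is to recognise $[\VV,\TT]$ as a wide interval whose associated wide subcategory is $\WL(\TT)$ itself, and then to read off both equalities from results already proved. Since $\WL(\TT)$ is (trivially) a Serre subcategory of $\WL(\TT)$, Proposition \ref{proposition:serremutations} applied with $\WW:=\WL(\TT)$ gives $\VV\in\tors\AA$, $\TT=\VV*\WL(\TT)$, and $\WL(\TT)=\VV^\perp\cap\TT$. Hence $[\VV,\TT]$ is a wide interval with associated wide subcategory $\VV^\perp\cap\TT=\WL(\TT)$, and Theorem \ref{theorem:reduction}(3) applied to $[\VV,\TT]$ immediately yields $\simple(\WL(\TT))=\Label{[\VV,\TT]^+}$ (and also $=\Label{[\VV,\TT]^-}$).

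For the second equality it suffices to show $[\VV,\TT]^+=[0,\TT]^+$ as subsets of $\tors\AA$, since then the two full subquivers of $\Hasse(\tors\AA)$ they span coincide, and hence so do their $\Label$-sets. The inclusion $[\VV,\TT]^+\subseteq[0,\TT]^+$ is clear. Conversely, let $q\colon\TT\to\UU'$ be any Hasse arrow of $\tors\AA$, with brick label $S_q$. By Proposition \ref{proposition:interval} we have $S_q\in(\UU')^\perp$, $\TT=\T(\UU',S_q)$, and $\UU'=\TT\cap{^\perp S_q}$; so Lemma \ref{lemma:littleboat}(2) applies and gives $S_q\in\simple(\WL(\TT))\subseteq\WL(\TT)$. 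Now every $X\in\VV=\TT\cap{^\perp}\WL(\TT)$ satisfies $\Hom_\AA(X,S_q)=0$ because $S_q\in\WL(\TT)$, so $X\in\TT\cap{^\perp S_q}=\UU'$; thus $\VV\subseteq\UU'$, that is, $\UU'\in[\VV,\TT]$. Since no torsion class lies strictly between $\UU'$ and $\TT$ in $\tors\AA$, the same holds in $[\VV,\TT]$, so $\TT\to\UU'$ is a Hasse arrow of $[\VV,\TT]$ and $\UU'\in[\VV,\TT]^+$. This proves $[\VV,\TT]^+=[0,\TT]^+$, whence $\Label{[\VV,\TT]^+}=\Label{[0,\TT]^+}$.

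Most of this is bookkeeping once Proposition \ref{proposition:serremutations}, Theorem \ref{theorem:reduction}(3), and Lemma \ref{lemma:littleboat} are in hand; the one step that carries genuine content is the containment $S_q\in\WL(\TT)$ for every Hasse arrow $\TT\to\UU'$ of $\tors\AA$, namely Lemma \ref{lemma:littleboat}(2). There is also the minor order-theoretic observation, used implicitly above, that a cover relation $\TT\to\UU'$ in $\tors\AA$ remains a cover relation in the subinterval $[\VV,\TT]$ once $\VV\subseteq\UU'$; this is immediate but is precisely what makes the comparison of $[\VV,\TT]^+$ with $[0,\TT]^+$ legitimate.
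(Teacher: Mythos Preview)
Your proof is correct and follows essentially the same route as the paper's: both invoke Proposition~\ref{proposition:serremutations} to identify $[\VV,\TT]$ as a wide interval with associated wide subcategory $\WL(\TT)$, then use Theorem~\ref{theorem:reduction}(3) for the first equality, and Lemma~\ref{lemma:littleboat}(2) (via Proposition~\ref{proposition:interval}) for the second. The only cosmetic difference is that you prove the slightly stronger statement $[\VV,\TT]^+=[0,\TT]^+$ as sets, whereas the paper closes the loop via the chain $\simple(\WL(\TT))=\Label{[\VV,\TT]^+}\subseteq\Label{[0,\TT]^+}\subseteq\simple(\WL(\TT))$; the content is identical.
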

\begin{proof}
First $\simple(\WL(\TT))=\Label{[\VV,\TT]^+}$ follows from Proposition \ref{proposition:serremutations} and Theorem \ref{theorem:joinintervals},
and $\Label{[\VV,\TT]^+} \subseteq \Label{[0,\TT]^+}$ is obvious.

Now it remains to show $\Label{[0,\TT]^+} \subseteq \simple(\WL(\TT))$.
Let $S \in \Label{[0,\TT]^+}$. Then there exists a Hasse arrow $\TT\xto{S}\UU$ in 
$\tors \AA$, and Proposition \ref{proposition:interval} implies $\TT=\T(\UU,S)$; 
hence $S\in\WL(\TT)$ by Lemma \ref{lemma:littleboat}.
\end{proof}

Now we get a characterization of wide intervals in terms of 
left and right wide subcategories.

\begin{theorem}\label{theorem:leftwidesubcat}
Let $[\UU,\TT]$ be an interval in $\tors \AA$ and set $\WW:=\UU^\perp\cap\TT$. Then the following conditions are equivalent:
\begin{itemize}
\item[(a)] $\WW\in\wide \AA$,
\item[(b)] $\WW\in\Serre(\WL(\TT))$,
\item[(c)] $\WW\in\Serre(\WR(\UU^\perp))$,
\item[(d)] $\WW=\WR(\UU^\perp)\cap\WL(\TT)$.
\end{itemize}
\end{theorem}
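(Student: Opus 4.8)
The plan is to close the loop of equivalences by proving the three implications into (a), namely (b)$\Rightarrow$(a), (c)$\Rightarrow$(a) and (d)$\Rightarrow$(a), together with the three implications (a)$\Rightarrow$(b), (a)$\Rightarrow$(c) and (a)$\Rightarrow$(d). The implications into (a) are formal; the ones out of (a) carry the real content and rest on Theorem \ref{theorem:reduction}(3) and Proposition \ref{proposition:leftwidesubcat}.

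For (b)$\Rightarrow$(a) I would use that $\WL(\TT)$ is a wide (hence abelian length) subcategory of $\AA$ by Proposition \ref{proposition:MS}, closed under extensions in $\AA$ and with exact inclusion. A Serre subcategory $\WW$ of $\WL(\TT)$ is then an abelian subcategory of $\WL(\TT)$ with exact inclusion, so composing with $\WL(\TT)\hookrightarrow\AA$ shows $\WW$ is closed under kernels and cokernels in $\AA$; and $\WW$ is closed under extensions in $\AA$ because it is so in $\WL(\TT)$, which is itself extension-closed in $\AA$. Hence $\WW\in\wide\AA$. The implication (c)$\Rightarrow$(a) is the same argument with the wide subcategory $\WR(\UU^\perp)$ in place of $\WL(\TT)$. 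For (d)$\Rightarrow$(a) I would argue directly: for any morphism $f$ between objects of $\WW=\WR(\UU^\perp)\cap\WL(\TT)$, its kernel, image and cokernel computed in $\AA$ lie both in the wide subcategory $\WL(\TT)$ and in the wide subcategory $\WR(\UU^\perp)$, hence in $\WW$; since $\WW=\UU^\perp\cap\TT$ is an intersection of two extension-closed subcategories it is also extension-closed, so $\WW\in\wide\AA$.

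For (a)$\Rightarrow$(b) the idea is to compare simple objects through the brick labelling. Assuming $[\UU,\TT]$ is a wide interval, Theorem \ref{theorem:reduction}(3) gives $\simple\WW=\Label{[\UU,\TT]^+}$ and Proposition \ref{proposition:leftwidesubcat} gives $\simple(\WL(\TT))=\Label{[0,\TT]^+}$; since $[\UU,\TT]^+\subseteq[0,\TT]^+$, restricting labels to the smaller vertex set yields $\simple\WW\subseteq\simple(\WL(\TT))$. By the bijection $\sbrick\AA\leftrightarrow\wide\AA$ of \cite[1.2]{R} we have $\WW=\Filt(\simple\WW)$, and since $\WL(\TT)$ is extension-closed in $\AA$ this filtration may be taken inside $\WL(\TT)$; thus $\WW$ is the subcategory of the abelian length category $\WL(\TT)$ filtered by the subset $\simple\WW\subseteq\simple(\WL(\TT))$, and such subcategories are precisely the Serre subcategories of $\WL(\TT)$, so $\WW\in\Serre(\WL(\TT))$. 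I would deduce (a)$\Rightarrow$(c) by applying (a)$\Rightarrow$(b) in the opposite category: under the order-reversing isomorphism $\bullet^\perp\colon(\tors\AA)^\op\to\torf\AA=\tors(\AA^\op)$ of Proposition \ref{proposition:duality} the interval $[\UU,\TT]$ becomes $[\TT^\perp,\UU^\perp]$, whose top is $\UU^\perp$ and whose difference subcategory is again ${^\perp(\TT^\perp)}\cap\UU^\perp=\TT\cap\UU^\perp=\WW$; since $\WL_{\AA^\op}=\WR_\AA$ and wideness is self-dual, (a)$\Rightarrow$(b) applied there reads exactly as (a)$\Rightarrow$(c) here. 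Finally (a)$\Rightarrow$(d) is then immediate: (a)$\Rightarrow$(b) and (a)$\Rightarrow$(c) give $\WW\subseteq\WL(\TT)$ and $\WW\subseteq\WR(\UU^\perp)$, hence $\WW\subseteq\WR(\UU^\perp)\cap\WL(\TT)$, while the reverse inclusion holds because $\WR(\UU^\perp)\subseteq\UU^\perp$, $\WL(\TT)\subseteq\TT$ and $\WW=\UU^\perp\cap\TT$.

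The step I expect to be the main obstacle is the bookkeeping in (a)$\Rightarrow$(b): one must upgrade the set-level inclusion $\simple\WW\subseteq\simple(\WL(\TT))$ of isomorphism classes of bricks to a genuine identification of $\WW$ with a $\Filt$ formed inside the abelian length category $\WL(\TT)$, which is what licenses the Serre-subcategory characterisation there; and in the dualisation for (a)$\Rightarrow$(c) one must check carefully that, after passing to $\AA^\op$, the torsion class to which $\WL$ is applied becomes the torsion-free class $\UU^\perp$, so that the correct ambient wide subcategory is indeed $\WR(\UU^\perp)$.
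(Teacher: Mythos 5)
Your proposal is correct and rests on the same key ingredients as the paper's proof: the implication $\text{(a)}\Rightarrow\text{(b)}$ uses Theorem~\ref{theorem:reduction}(3) together with Proposition~\ref{proposition:leftwidesubcat} exactly as the paper does, and $\text{(a)}\Leftrightarrow\text{(c)}$ is obtained by duality in both treatments. The only structural difference is cosmetic: the paper closes condition (d) by showing $(\text{(b)}\wedge\text{(c)})\Rightarrow\text{(d)}$ and then $\text{(d)}\Rightarrow\text{(b)}$ (checking that $\WW$ is closed under subobjects inside $\WL(\TT)$), whereas you show $\text{(a)}\Rightarrow\text{(d)}$ and $\text{(d)}\Rightarrow\text{(a)}$ directly (checking kernels, images and cokernels lie in the intersection of the two wide subcategories); both closures of the loop are equally elementary. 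Your spelling-out of $\text{(b)}\Rightarrow\text{(a)}$ via exactness of the inclusion $\WL(\TT)\hookrightarrow\AA$ is a welcome clarification of the paper's rather terse remark there, and the care you flag as a potential obstacle in $\text{(a)}\Rightarrow\text{(b)}$ — that the $\Filt$ can be formed inside $\WL(\TT)$ because that subcategory is extension-closed with exact inclusion — is indeed the right thing to verify and you handle it correctly.
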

\begin{proof} 
$\text{(a)}\Rightarrow \text{(b)}$: 
By assumption $[\UU,\TT]$ is a wide interval, so 
$\WW=\Filt(\Label{[\UU,\TT]^+})$ follows from Theorem \ref{theorem:reduction}.
Therefore $\WW\in\Serre(\WL(\TT))$ according to Proposition \ref{proposition:leftwidesubcat}.

$\text{(b)}\Rightarrow \text{(a)}$: 
This is obvious, since by definition Serre subcategories are closed under extensions.

$\text{(a)}\Leftrightarrow \text{(c)}$: 
This follows by duality from $\text{(a)}\Leftrightarrow \text{(b)}$.

Thus it suffices to show $(\text{(b) and (c)}) \Leftrightarrow \text{(d)}$.

$(\text{(b) and (c)}) \Rightarrow \text{(d)}$:
This is deduced as 
$\WW \subseteq \WR(\UU^\perp) \cap \WL(\TT) \subseteq \UU^\perp \cap \TT = \WW$.

$\text{(d)} \Rightarrow \text{(b)}$:
Assume $\WW=\WR(\UU^\perp)\cap\WL(\TT)$.
Since $\WW=\WR(\UU^\perp)\cap\WL(\TT) \in \wide(\WL(\TT))$,
it remains to check that
$\WW$ is closed under taking subobjects in $\WL(\TT)$.
Let $X \in \WW$ and $X' \subseteq X$ satisfy $X' \in \WL(\TT)$.
As $X \in \WW \subseteq \UU^\perp$, we get $X' \in \UU^\perp$.
By assumption, $X' \in \TT$, so $X' \in \UU^\perp \cap \TT = \WW$ as desired.
Thus $\WW\in\Serre(\WL(\TT))$.

$\text{(d)} \Rightarrow \text{(c)}$:
This can be checked as $\text{(d)} \Rightarrow \text{(b)}$.
\end{proof}

Propositions \ref{proposition:serremutations}, 
\ref{proposition:leftwidesubcat}
and Theorem \ref{theorem:leftwidesubcat} yield the following property:

\begin{theorem}\label{theorem:power}
Let $\TT \in \tors \AA$. Taking labels gives a bijection
\begin{align*}
\{ \text{Hasse arrows in $\tors \AA$ starting at $\TT$} \} \to \simple(\WL(\TT)).
\end{align*}
Moreover, the map $\WW \mapsto \TT \cap {^\perp \WW}$ induces a bijection
\begin{align*}
\Serre(\WL(\TT)) \to \{ \VV \in \tors \AA \mid \text{$[\VV,\TT]$ is a wide interval}\}.
\end{align*}
\end{theorem}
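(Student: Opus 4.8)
The plan is to deduce Theorem \ref{theorem:power} from the structural results already assembled, treating the two assertions separately but with the second one leaning on the first.

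First I would prove the bijection between Hasse arrows starting at $\TT$ and $\simple(\WL(\TT))$. Assigning to an arrow its brick label is well defined by Definition \ref{definition:bricklabeling}, and Proposition \ref{proposition:leftwidesubcat} already gives the set-theoretic equality $\Label{[0,\TT]^+}=\simple(\WL(\TT))$, so every simple object of $\WL(\TT)$ does arise as a label of an arrow starting at $\TT$; surjectivity is immediate. For injectivity, suppose two Hasse arrows $\TT \to \UU_1$ and $\TT \to \UU_2$ carry the same brick label $S$. By Proposition \ref{proposition:interval}(2) we have $\UU_i = \TT \cap {^\perp S}$ for $i=1,2$, so $\UU_1=\UU_2$ and the two arrows coincide. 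Hence the label map is a bijection. (One should also note that distinct arrows from $\TT$ have non-isomorphic labels by the same argument, which is the content just used.)

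Next I would establish the second bijection. The map $\WW \mapsto \TT \cap {^\perp\WW}$ is defined on $\Serre(\WL(\TT))$. To see it lands in the asserted target, take $\WW \in \Serre(\WL(\TT))$ and set $\VV := \TT \cap {^\perp\WW}$; by Proposition \ref{proposition:serremutations} we have $\VV \in \tors\AA$, $\TT = \VV * \WW$ and $\WW = \VV^\perp \cap \TT$. Since $\WW$ is wide (Serre subcategories of the wide category $\WL(\TT)$ are wide), this says precisely that $[\VV,\TT]$ is a wide interval, so the map is well defined into $\{\VV \in \tors\AA \mid [\VV,\TT] \text{ is a wide interval}\}$. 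For surjectivity, given a wide interval $[\VV,\TT]$ with associated wide subcategory $\WW := \VV^\perp \cap \TT$, Theorem \ref{theorem:leftwidesubcat}, equivalence $\text{(a)} \Leftrightarrow \text{(b)}$, shows $\WW \in \Serre(\WL(\TT))$, and by Theorem \ref{theorem:leftwidesubcat} (applied with $\UU = \VV$) together with Proposition \ref{proposition:serremutations} we recover $\VV = \TT \cap {^\perp\WW}$, i.e. $\VV$ is the image of $\WW$. For injectivity, if $\WW_1, \WW_2 \in \Serre(\WL(\TT))$ satisfy $\TT \cap {^\perp\WW_1} = \TT \cap {^\perp\WW_2} =: \VV$, then Proposition \ref{proposition:serremutations} gives $\WW_i = \VV^\perp \cap \TT$ for both $i$, so $\WW_1 = \WW_2$. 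Thus the two maps $\WW \mapsto \TT \cap {^\perp\WW}$ and $\VV \mapsto \VV^\perp \cap \TT$ are mutually inverse bijections.

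The substantive work has in effect been front-loaded into the earlier propositions, so the proof of Theorem \ref{theorem:power} itself is mostly bookkeeping: the main point to be careful about is verifying that the two constructions $\WW \mapsto \TT \cap {^\perp\WW}$ and $\VV \mapsto \VV^\perp \cap \TT$ are genuinely inverse to each other, which is exactly where Proposition \ref{proposition:serremutations} (that $\TT = \UU * \WW$ forces both $\WW = \UU^\perp \cap \TT$ and $\UU = \TT \cap {^\perp\WW}$) is indispensable. No genuinely new obstacle arises; the only mild subtlety is making sure that in applying Theorem \ref{theorem:leftwidesubcat} one takes the hypothesis interval to be $[\VV,\TT]$ so that the roles of $\UU$ there and $\VV$ here match.
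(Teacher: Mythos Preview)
Your proof is correct and matches the paper's approach exactly: the paper's proof is a one-line citation of Propositions~\ref{proposition:serremutations}, \ref{proposition:leftwidesubcat} and Theorem~\ref{theorem:leftwidesubcat}, and you have correctly filled in the bookkeeping they had in mind.

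One small imprecision worth cleaning up: Proposition~\ref{proposition:serremutations} as stated does not assert that $\TT = \UU * \WW$ forces $\UU = \TT \cap {^\perp\WW}$; it starts from $\UU := \TT \cap {^\perp\WW}$ and deduces $\TT = \UU * \WW$ and $\WW = \UU^\perp \cap \TT$. The step you actually need for surjectivity---namely that $\VV = \TT \cap {^\perp(\VV^\perp \cap \TT)}$ for any $\VV \subseteq \TT$ in $\tors\AA$---follows most directly from Lemma~\ref{lemma:intervalcategory}: if $X \in \TT \cap {^\perp\WW}$, then in the canonical sequence $0 \to V \to X \to W \to 0$ with $V \in \VV$ and $W \in \WW$ the surjection $X \to W$ is zero, so $W = 0$ and $X \in \VV$. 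This is elementary and your overall argument is sound once this attribution is adjusted.
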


We end this section applying our results to $\tau$-tilting theory.
For this purpose, we recall some related notions.

Let $A$ be a finite-dimensional algebra over a field $K$,
$M \in \mod A$, and $P \in \proj A$.
Then, the pair $(M,P)$ is called a \textit{support $\tau$-tilting pair}
if $(M,P)$ is $\tau$-rigid and $|M|+|P|=|A|$,
where $|\cdot|$ denotes the number of isoclasses of indecomposable direct summands.
Adachi--Iyama--Reiten \cite[Theorem 2.7]{AIR} showed that 
there exists a bijection from 
the set of basic support $\tau$-tilting pairs 
to the set of functorially finite torsion classes in $\mod A$,
given by $M \mapsto \Fac M$.

If two distinct support $\tau$-tilting pairs $(M,P) \ne (M',P')$ has
a common direct summand $(N,Q)$ with $|N|+|Q|=|A|-1$,
then we say that $(M',P')$ is a mutation of $(M,P)$.
In this case, $\Fac M' \subsetneq \Fac M$ or $\Fac M' \supsetneq \Fac M$ holds
\cite[Definition-Proposition 2.28]{AIR}.
The former case is called a left mutation, and the latter is called a right mutation.

For a fixed support $\tau$-tilting $(M,P)$,
\cite[Theorem 3.1]{DIJ} implies that 
any arrow starting at $\Fac M$ in $\Hasse(\tors \AA)$ 
comes from some left mutation of $(M,P)$;
more explicitly, 
if $\UU \in \tors \AA$ has an arrow $\Fac M \to \UU$ in $\Hasse(\tors \AA)$,
then there exists a left mutation of $(M',P')$ of $(M,P)$
satisfying $\Fac M'=\UU$.
Therefore the arrows starting at $\Fac M$ in $\Hasse(\tors \AA)$ 
bijectively correspond to the left mutations of $(M,P)$.
By Proposition \ref{proposition:leftwidesubcat},
the labels of the arrows starting at $\Fac M$ coincides with $\simple(\WL(\TT))$.
Thus we get the following result by using Theorem \ref{theorem:power}.

\begin{corollary}
Let $A$ be a finite-dimensional algebra over a field $K$ and 
$(M,P)$ be a support $\tau$-tilting pair in $\mod A$.
Consider the torsion class $\TT:=\Fac M$.
If $m$ is the number of left mutations of the support $\tau$-tilting pair $(M,P)$,
then $\simple(\WL(\TT))$ has exactly $m$ elements,
and there exist exactly $2^m$ torsion classes $\VV$
such that $[\VV,\TT]$ are wide intervals.
\end{corollary}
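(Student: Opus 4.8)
The plan is to obtain the corollary as a formal consequence of Theorem~\ref{theorem:power}, combined with the dictionary between mutations of support $\tau$-tilting pairs and Hasse arrows of $\tors A$ recalled in the paragraph preceding the statement, and with the elementary identification of Serre subcategories of an abelian length category with subsets of its simple objects.

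First I would invoke \cite[Theorem 3.1]{DIJ}: the arrows of $\Hasse(\tors A)$ starting at $\TT = \Fac M$ are in bijection with the left mutations of $(M,P)$, a left mutation $(M',P')$ corresponding to the arrow $\Fac M \to \Fac M'$. Hence there are exactly $m$ such arrows. The first bijection of Theorem~\ref{theorem:power} sends each such arrow to its brick label and thereby identifies the set of Hasse arrows of $\tors A$ starting at $\TT$ with $\simple(\WL(\TT))$; therefore $\simple(\WL(\TT))$ has exactly $m$ elements.

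Next, since $\WL(\TT)$ is a wide subcategory of $\mod A$, it is itself an essentially small abelian length category, so by the discussion of Serre subcategories in Section~\ref{section:setting} the set $\Serre(\WL(\TT))$ is identified with the power set of $\simple(\WL(\TT))$. As the latter set has $m$ elements by the previous step, $\WL(\TT)$ has exactly $2^m$ Serre subcategories. Finally, the second bijection of Theorem~\ref{theorem:power}, namely $\WW \mapsto \TT \cap {^\perp\WW}$, is a bijection from $\Serre(\WL(\TT))$ onto $\{\VV \in \tors A \mid [\VV,\TT] \text{ is a wide interval}\}$; composing the two identifications shows that there are exactly $2^m$ torsion classes $\VV$ for which $[\VV,\TT]$ is a wide interval, as claimed.

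I do not expect any genuine obstacle: once Theorem~\ref{theorem:power} and the mutation--arrow correspondence of \cite{DIJ} are in hand, the corollary is a counting exercise. The only point worth a line of care is that $2^m$ is meaningful, i.e.\ that $\simple(\WL(\TT))$ is finite; this is immediate from the first step, where it is put in bijection with the (finite) set of left mutations of $(M,P)$.
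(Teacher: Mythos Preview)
Your proposal is correct and follows essentially the same approach as the paper: the paper's argument (given in the paragraph immediately preceding the corollary) uses \cite[Theorem~3.1]{DIJ} to identify Hasse arrows starting at $\TT$ with left mutations of $(M,P)$, then invokes Proposition~\ref{proposition:leftwidesubcat} and Theorem~\ref{theorem:power} to conclude. Your use of the first bijection in Theorem~\ref{theorem:power} in place of the separate citation of Proposition~\ref{proposition:leftwidesubcat} is harmless, since the former packages the content of the latter.
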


We remark that all such torsion classes $\VV$ are functorially finite
in $\mod A$ by \cite[Theorem 3.14]{J}.

\section{Widely generated torsion classes}\label{section:ITbijections}

In this section we consider widely generated torsion classes defined as follows:

\begin{definition}
Let $\TT \in \tors \AA$.
Then $\TT$ is called a \textit{widely generated torsion class}
if $\TT$ admits some $\WW \in \wide \AA$ such that $\TT=\T(\WW)$.
\end{definition}

By Proposition \ref{proposition:MS},
it is easy to see that $\TT$ is a widely generated torsion class
if and only if $\TT=\T(\WL(\TT))$.
We have more characterizations of widely generated torsion classes 
from the results in the previous section.

\begin{theorem}\label{theorem:widetorsionclass}
For $\TT\in\tors \AA$, the following conditions are equivalent:
\begin{itemize}
\item[(a)] $\TT$ is a widely generated torsion class.
\item[(b)] $\TT=\T(\WL(\TT))$.
\item[(c)] $\TT=\T(\LL)$ where $\LL:=\Label{[0,\TT]^+}$.
\item[(d)] For every $\UU\in\tors \AA$ such that $\UU\subsetneq\TT$, there exists a Hasse arrow $\TT\xto{}\UU'$ such that $\UU\subseteq\UU'$.
\end{itemize}
\end{theorem}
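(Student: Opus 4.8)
The plan is to prove the cycle of implications $\text{(a)} \Leftrightarrow \text{(b)} \Leftrightarrow \text{(c)} \Rightarrow \text{(d)} \Rightarrow \text{(c)}$. The equivalence $\text{(a)} \Leftrightarrow \text{(b)}$ will be read off from Proposition \ref{proposition:MS}: if $\TT = \T(\WW)$ with $\WW \in \wide \AA$, then applying $\WL$ and using $\WL \circ \T = \id$ gives $\WL(\TT) = \WW$, hence $\TT = \T(\WL(\TT))$; the converse is trivial since $\WL(\TT) \in \wide \AA$. For $\text{(b)} \Leftrightarrow \text{(c)}$, put $\LL := \Label{[0,\TT]^+}$. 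Proposition \ref{proposition:leftwidesubcat} gives $\simple(\WL(\TT)) = \LL$, and since $\WL(\TT)$ is wide it is recovered as $\Filt \LL$; then $\LL \subseteq \WL(\TT) = \Filt \LL \subseteq \T(\LL)$ forces $\T(\WL(\TT)) = \T(\LL)$ by minimality of $\T$, so (b) and (c) are the same condition.

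The substantive step is $\text{(c)} \Rightarrow \text{(d)}$. Assuming $\TT = \T(\LL)$, I would fix $\UU \in \tors \AA$ with $\UU \subsetneq \TT$ and first note that, as $\UU$ is a torsion class and $\T(\LL) = \TT \not\subseteq \UU$, some $S \in \LL$ satisfies $S \notin \UU$. By Proposition \ref{proposition:leftwidesubcat} (or the bijection in Theorem \ref{theorem:power}), this $S$ is the label of a Hasse arrow $\TT \xto{S} \UU'$ in $\tors \AA$, and Proposition \ref{proposition:interval} then identifies $\UU' = \TT \cap {}^\perp S$ with $\TT = \T(\UU', S)$ and $S$ a brick in $(\UU')^\perp$. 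It remains to check $\UU \subseteq \UU'$, equivalently $\UU \subseteq {}^\perp S$: if some nonzero $f \colon U \to S$ with $U \in \UU \subseteq \TT$ existed, then Lemma \ref{lemma:littleboat}(1), applied to the presentation $\TT = \T(\UU', S)$, would force $f$ to be epic, whence $S \in \Fac U \subseteq \UU$, contradicting the choice of $S$.

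For $\text{(d)} \Rightarrow \text{(c)}$, I would argue by contradiction. Since $\LL \subseteq \TT$ we always have $\T(\LL) \subseteq \TT$; if this inclusion were strict, then applying (d) to $\UU := \T(\LL)$ would produce a Hasse arrow $\TT \xto{S} \UU'$ with $\T(\LL) \subseteq \UU'$. Its label $S$ lies in $\LL \subseteq \T(\LL) \subseteq \UU' = \TT \cap {}^\perp S$ by Proposition \ref{proposition:interval}, which forces $\Hom_\AA(S, S) = 0$, impossible for a brick. Hence $\T(\LL) = \TT$.

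Once the earlier structural results are in place the argument is short. I expect the only delicate point to be $\text{(c)} \Rightarrow \text{(d)}$, where one must use simultaneously that a label $S$ of a Hasse arrow out of $\TT$ lies in $\WL(\TT)$ --- so that $\UU' = \TT \cap {}^\perp S$ is genuinely a lower cover of $\TT$, via Proposition \ref{proposition:interval} --- and that every homomorphism from an object of $\TT$ into $S$ is zero or surjective (Lemma \ref{lemma:littleboat}). Recognizing Lemma \ref{lemma:littleboat} as the right tool, rather than carrying out any hard computation, is the main obstacle.
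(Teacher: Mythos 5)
Your proof is correct and follows the paper's own argument essentially step for step: the equivalences (a)$\Leftrightarrow$(b)$\Leftrightarrow$(c) via Proposition \ref{proposition:MS} and Proposition \ref{proposition:leftwidesubcat}, (c)$\Rightarrow$(d) via the epi-or-zero dichotomy of Lemma \ref{lemma:littleboat} applied to $\TT=\T(\UU',S)$, and (d)$\Rightarrow$(c) by contradiction from $S\in\T(\LL)\subseteq\UU'$ against $S$ being a brick. The only cosmetic difference is that in (d)$\Rightarrow$(c) the paper derives the contradiction from $S\in(\UU')^\perp\subseteq\T(\LL)^\perp$ whereas you use $\UU'\subseteq{}^\perp S$; both yield $\Hom_\AA(S,S)=0$.
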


\begin{proof}
$\text{(a)}\Leftrightarrow\text{(b)}$ follows from Proposition \ref{proposition:MS} 
and $\text{(b)}\Leftrightarrow\text{(c)}$ is implied by Proposition \ref{proposition:leftwidesubcat}.

$\text{(c)}\Rightarrow\text{(d)}$: 
Let $\UU\in\tors \AA$ such that $\UU\subsetneq\TT$. 
Then there exists some $S\in\LL$ such that $S\notin\UU$ since $\TT=\T(\LL)$. 
From there, one can conclude $\UU\subseteq \UU':=\TT\cap{^\perp S}$; 
indeed for every $U \in \UU$ and $f \colon U \to S$, 
$f$ must be zero or epic by Lemma \ref{lemma:littleboat}, 
and $f=0$ since $S\notin\UU$.

$\text{(d)}\Rightarrow\text{(c)}$: The inclusion $\T(\LL)\subseteq\TT$ follows from $\LL\subseteq\TT$. For the other inclusion, suppose $\T(\LL)\subsetneq\TT$. Then by assumption there is a Hasse arrow $\TT\xto{S}\UU'$ with $\T(\LL)\subseteq\UU'$. Obviously we get $S\in\LL\subseteq\T(\LL)$, but it contradicts $S\in(\UU')^\perp\subseteq\T(\LL)^\perp$.
\end{proof}

\begin{remark}
The equivalences of (a), (c) and (d) above were also obtained 
in \cite[Subsection 3.2]{BCZ}
in terms of minimal extending modules and canonical join representations of torsion classes.
If the conditions above hold,
then $\TT=\Join_{S \in \LL}\T(S)$ is the canonical join representation of $\TT$.
\end{remark}

\begin{example}
Consider the following algebra $A$ appearing in \cite[Example 4.13]{A}:
\begin{align*}
K\left( \begin{xy}
( 0, 0) *+{1} = "1",
(16, 0) *+{2} = "2",
(32, 0) *+{3} = "3",
\ar@< 1mm>^{\alpha} "1";"2"
\ar@<-1mm>_{\beta} "1";"2"
\ar^{\gamma} "2";"3"
\end{xy}\right)/\langle \alpha \gamma \rangle.
\end{align*}
Let $\TT$ be the full subcategory
\begin{align*}
\add\left(
S_3, S_1,\begin{smallmatrix} \!\!1&\!\!&\!\!1\\ \!\!&\!\!2&\!\! \end{smallmatrix},
\begin{smallmatrix} \!\!1&\!\!&\!\!1&\!\!&\!\!1\\ \!\!&\!\!2&\!\!&\!\!2&\!\! \end{smallmatrix}, \ldots
\right),
\end{align*}
where $S_3$ is the simple projective module corresponding to the vertex 3, and 
$S_1,\begin{smallmatrix} \!\!1&\!\!&\!\!1\\ \!\!&\!\!2&\!\! \end{smallmatrix},
\begin{smallmatrix} \!\!1&\!\!&\!\!1&\!\!&\!\!1\\ \!\!&\!\!2&\!\!&\!\!2&\!\! \end{smallmatrix}, \ldots$ are all the indecomposable preinjective modules over the quotient algebra $A/\langle e_3 \rangle \cong K(1 \rightrightarrows 2)$.
Then one can check that $\TT$ is a torsion class in $\mod A$.

The simple projective module $S_3$ 
belongs to $\WL(\TT)$, and clearly it is simple in $\WL(\TT)$.
There exists no other simple object in $\WL(\TT)$;
indeed if $S \not \cong S_3$ is a simple object in $\WL(\TT)$,
then $S$ must be a preinjective $(A/\langle e_3 \rangle)$-module in $\TT$,
but one can easily check that 
any preinjective $(A/\langle e_3 \rangle)$-module cannot be in $\WL(\TT)$.
Thus $\WL(\TT)=\Filt S_3$, and $\TT \ne \T(\WL(\TT))$ follows.
The condition (b) in Theorem \ref{theorem:widetorsionclass} does not hold for 
the torsion class $\TT$.
Therefore $\TT$ is not a widely generated torsion class.

Set $\UU:=\TT \cap {^\perp S_3}$. Then the modules in $\UU$ are 
all the preinjective $(A/\langle e_3 \rangle)$-modules.
There exists a Hasse arrow $\TT \to \UU$,
and it is the unique Hasse arrow starting at $\TT$
by Proposition \ref{proposition:leftwidesubcat}.
Thus $\TT$ does not satisfy the condition (d) in Theorem \ref{theorem:widetorsionclass};
for example $\add S_3 \subsetneq \TT$ is a torsion class not contained in $\UU$.
\end{example}

\section*{Funding}

Sota Asai was supported by Japan Society for the Promotion of Science KAKENHI JP16J02249 and JP19K14500.

\section*{Acknowledgement}

The authors thank Aaron Chan, Laurent Demonet, Osamu Iyama, Gustavo Jasso and Jan Schr\"{o}er for kind instructions and discussions.

\end{document}